\numberwithin{equation}{section}
\newtheorem{prop}{Proposition}[section]
\newtheorem{theo}[prop]{Theorem}
\newtheorem{lemm}[prop]{Lemma}
\newtheorem{rem}[prop]{Remark}
\newtheorem{defi}[prop]{Definition}
\newtheorem{claim}[prop]{Claim}
\def\begeq{\begin{equation}}
\def\endeq{\end{equation}}
\begin{document}

\title{Singular K\"ahler-Einstein metrics on  $\mathbb Q$-Fano compactifications of   Lie groups}
\author{Yan $\text{Li}^{*}$, Gang $\text{Tian}^{\dag}$ and Xiaohua $\text{Zhu}^{\ddag}$}

\address{BICMR and SMS, Peking University, Beijing 100871, China.}
\email{liyanmath@pku.edu.cn,\ \ \ tian@math.pku.edu.cn\\\ xhzhu@math.pku.edu.cn}

\thanks {$^*$Partially supported by China Post-Doctoral Grant BX20180010.}
\thanks {$^{\dag}$Partially supported by NSFC grants 11331001 and 11890661.}
\thanks {$^{\ddag}$Partially supported by NSFC Grants  11771019 and BJSF Grants Z180004.}
\subjclass[2000]{Primary: 53C25; Secondary:
32Q20, 58D25, 14L10}

\keywords{K\"ahler-Einstein metrics, $\mathbb Q$-Fano compactifications of Lie groups, moment polytopes, reduced Ding functional}

\begin{abstract}
In this paper, we prove an existence result for K\"ahler-Einstein metrics on $\mathbb Q$-Fano compactifications of Lie groups.  As an application, we classify $\mathbb Q$-Fano compactifications of $SO_4(\mathbb C)$ which admit a K\"ahler-Einstein metric with the same volume as that of a smooth Fano compactification of $SO_4(\mathbb C)$.
\end{abstract}
\maketitle

\tableofcontents

\section{Introduction}

Let $G$ be an $n$-dimensional connected, complex reductive Lie group which is the complexification of a compact Lie group $K$.  Let  $T^{\mathbb C}$ be  a maximal Cartan torus of $G$ whose dimension is $r$.  Denote by $\Phi_+$ a positive roots system  associated to $T^{\mathbb C}$. Put
 \begin{align}\label{rho}\rho\,=\,{\frac 1 2}\sum_{\alpha\in\Phi_+}\alpha.
 \end{align}
It can be regarded as a character in $\mathfrak a^*$, where $\mathfrak a^*$ is the dual space  of  real part $\mathfrak a$ of Lie algebra of $T^{\mathbb C}$.
Let $\pi$ be a function on ${\mathfrak a^*}$ defined by
$$\pi(y)=\prod_{\alpha\in\Phi_+}\langle\alpha,y\rangle^2, ~y\in {\mathfrak a^*},$$
where $ \langle\cdot,\cdot \rangle$ \footnote{Without of confusion, we also write $\langle\alpha,y\rangle$ as $\alpha(y)$ for simplicity.} denotes the Cartan-Killing inner product on  ${\mathfrak a^*}$.

Let $M$  be a $\mathbb Q$-Fano compactification of $G$.
Since $M$ contains a closure $Z$ of $T^{\mathbb C}$-orbit,   there is an associated moment polytope $P$ of $Z$ induced by $(M, -K_M)$ \cite{AB1, AB2}.
Let $P_+$ be the positive part of $P$ defined by
$$P_+\,=\,\{y\in P|~ \langle\alpha,y\rangle >0, ~\forall~ \alpha\in \Phi_+\}.$$
Denote by $2P_+$ a dilation of $P_+$ at rate $2$.  We define the barycenter of $2P_+$ with respect to the weighted measure $\pi(y)dy$ by
$$bar(2P_+)\,=\,\frac{\int_{2P_+}y\pi(y) \,dy}{\int_{2P_+}\pi(y) \,dy}.$$

In \cite{Del2}, Delcroix proved the following the existence theorem for K\"ahler-Einstein metrics on smooth Fano compactifications of $G$.

\begin{theo}\label{de} Let $M$ be a smooth Fano $G$-compactification. Then $M$ admits a K\"ahler-Einstein metric if and only if
\begin{align}\label{bary}
bar(2P_+)\,\in\, 4\rho+\Xi,
\end{align}
where $\Xi$ is the relative interior of the cone generated by $\Phi_+$.
\end{theo}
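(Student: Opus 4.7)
The plan is to reduce the K\"ahler--Einstein equation to a real Monge--Amp\`ere equation on the Cartan subalgebra $\mathfrak a$ via the $K\times K$-symmetry, and then to analyze this equation by the continuity method, using the barycenter condition exactly at the step where one needs a $C^0$-estimate. First, using the $KT^{\mathbb C}K$-decomposition, any $K\times K$-invariant smooth K\"ahler potential $\phi$ on $M$ in the class $c_1(M)$ restricts to a $W$-invariant smooth strictly convex function $u$ on $\mathfrak a$, whose Legendre transform $u^*$ has effective domain equal to $2P_+$ (after a suitable normalization fixing $2\rho$ as the asymptotic slope, coming from the $K$-invariant Haar measure's density $\prod_{\alpha\in\Phi_+}\sinh^2\langle\alpha,x\rangle$). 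The K\"ahler--Einstein condition $\mathrm{Ric}(\omega_\phi)=\omega_\phi$ then becomes, on $\mathfrak a$, a real Monge--Amp\`ere equation of the form
\begin{equation*}
\mathrm{MA}_\mathbb R(u)(x)\,J(Du(x))\,=\,e^{-u(x)+\langle 2\rho,x\rangle},
\end{equation*}
where $J$ is a smooth positive function on $P_+$ whose leading behavior near the walls of the Weyl chamber is $\pi$, and $\mathrm{MA}_\mathbb R$ is the real Monge--Amp\`ere operator. I would derive this reduction carefully, tracking the $\rho$-shift and the weight $\pi$ as coming respectively from the $K$-invariant measure and the Vandermonde in the Weyl integration formula.

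Second, I would run the Aubin--Yau continuity method,
\begin{equation*}
\mathrm{Ric}(\omega_{\phi_t})=t\,\omega_{\phi_t}+(1-t)\,\omega_0,\qquad t\in[0,1],
\end{equation*}
with $\omega_0$ a $K\times K$-invariant reference metric. Openness and the higher-order estimates (once $C^0$ is known) are standard. The key nontrivial step is the uniform $C^0$-estimate for $\phi_t$ as $t\to 1$, which by the usual reduction is equivalent to a uniform coercivity / properness bound for the restriction of the (twisted) Ding functional $\mathcal D_t$ to $K\times K$-invariant potentials, modulo the action of the real torus $T_{\mathbb R}\subset T^{\mathbb C}$ by translation. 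Writing $\mathcal D_t$ in terms of $u$, this reduces to an inequality on $W$-invariant convex functions on $\mathfrak a$ involving integrals against $\pi\,dy$ over $2P_+$ and a linear term $\langle 4\rho,\cdot\rangle$ (the factor $4\rho$ arising as the sum of the shift from $J\sim\pi$ and of the shift from the anticanonical polarization, each contributing $2\rho$).

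Third, for the \emph{necessity} of (\ref{bary}): assuming a K\"ahler--Einstein metric exists, the Futaki invariant along any holomorphic vector field in the center of $\mathrm{Lie}(T^{\mathbb C})$ must vanish, and along vector fields corresponding to elements of $\Xi$ must be nonpositive (by the direction of the flow on the Ding functional). A localization computation on the polytope $2P_+$ expresses these as linear functionals $\xi\mapsto \langle bar(2P_+)-4\rho,\xi\rangle$ against the appropriate cone, forcing $bar(2P_+)-4\rho\in\overline\Xi$, and genericity (or equivalently the strict sign along rays in the interior of the dual cone) promotes this to $bar(2P_+)-4\rho\in\Xi$. For \emph{sufficiency}: assuming (\ref{bary}), I would show that $\mathcal D$ restricted to normalized $K\times K$-invariant potentials is proper. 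Concretely, after subtracting the $T_{\mathbb R}$-action (i.e.\ normalizing so that $Du(0)=2\rho$, or equivalently fixing the barycenter of the image of $Du$), the linear part of $\mathcal D$ picks up a strictly positive constant times $\langle bar(2P_+)-4\rho,\cdot\rangle$ along every ray in $\overline\Xi$, and this strict positivity on the recession cone, combined with the standard coercivity of the entropy term, yields coercivity of $\mathcal D$. Feeding this back into the continuity method closes the $C^0$-estimate and gives existence.

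The main obstacle is the third paragraph: turning the infinitesimal obstruction (\ref{bary}) into a genuine coercivity estimate for $\mathcal D$ over \emph{all} convex $u$, not merely linear ones. The difficulty is that the $T_{\mathbb R}$-action is noncompact, so one must argue that after quotienting by this action the linear forcing from $bar(2P_+)-4\rho\in\Xi$ really does dominate at infinity; this requires a careful decomposition of a general $u$ into a linear part (absorbed by the $T_{\mathbb R}$-action) plus a $W$-invariant convex remainder whose supporting function at infinity lies in $\overline\Xi$, together with a Santal\'o-type inequality comparing $\int_{2P_+}\pi\,dy$ and $\int_{\mathfrak a}e^{-u+\langle 2\rho,x\rangle}\,dx$.
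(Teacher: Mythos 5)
Your overall strategy --- reduce the K\"ahler--Einstein equation via $K\times K$-symmetry to a real Monge--Amp\`ere equation for $W$-invariant convex functions on $\mathfrak a$, and close the $C^0$-estimate of a continuity method by coercivity of the Ding functional --- is essentially Delcroix's original route in \cite{Del2}. The present paper does not reprove Theorem \ref{de} directly: it quotes it and proves the more general Theorem \ref{LTZ2} by a purely variational method, minimizing the reduced Ding functional $\mathcal D=\mathcal L+\mathcal F$ on the Legendre-dual space $\mathcal E^1_{K\times K}(2P)$ and combining properness (Proposition \ref{ding-proper}), lower semicontinuity (Proposition \ref{lsc-prop}) and an Euler--Lagrange computation for the minimizer (Proposition \ref{minimizer-in-E-prop}). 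Both routes stand or fall on the same estimate, the coercivity $\mathcal L(u)\geq\lambda\int_{2P_+}u\,\pi\,dy$ for normalized $W$-invariant convex $u$ under \eqref{bary} (Lemma \ref{linear-proper-thm}, taken from \cite{LZZ,LZ}), so in the smooth case your plan is viable in outline. (A bookkeeping slip: the right-hand side of your real Monge--Amp\`ere equation should carry the full weight $\mathbf J(x)=\prod_{\alpha\in\Phi_+}\sinh^2\alpha(x)\sim Ce^{\langle 4\rho,x\rangle}$, not $e^{\langle2\rho,x\rangle}$; the $4\rho$ in the linear term of $\mathcal D$ comes entirely from this asymptotic, not from two separate $2\rho$ contributions.)

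There are, however, two genuine gaps. First, your mechanism for coercivity --- quotienting by the $T_{\mathbb R}$-translation action and ``normalizing $Du(0)=2\rho$'' --- does not work in the semisimple directions: a translate $u(\cdot-a)$ of a $W$-invariant function is again $W$-invariant only when $a$ lies in the center of $\mathfrak g$, so for $G$ semisimple (e.g.\ $SO_4(\mathbb C)$) there is no translation freedom at all to absorb the linear part of $u$. The actual content of the coercivity lemma is that for $W$-invariant convex $u$ with $u\geq u(O)=0$, convexity and $W$-invariance control $u(4\rho)$ by $\frac{1}{V}\int_{2P_+}u\,\pi\,dy$ with a factor strictly less than $1$ precisely when $bar(2P_+)-4\rho$ lies in $\Xi$; this inequality over the full cone of normalized $W$-invariant convex functions is the heart of the theorem and your sketch does not supply it. Second, the necessity direction cannot be extracted from Futaki invariants of holomorphic vector fields as you propose: for $G$ semisimple the identity component of $\Aut(M)$ is reductive with semisimple Lie algebra containing $\mathfrak g\oplus\mathfrak g$, so the Futaki character vanishes identically on the vector fields generated by elements dual to $\Xi$, and your ``nonpositivity'' condition is vacuous. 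The condition $bar(2P_+)\in\overline{4\rho+\Xi}$ is detected only by genuine degenerations: one builds test configurations from $W$-invariant piecewise-linear convex functions whose Donaldson--Futaki invariant equals $\mathcal L(f)$ (this is the construction of \cite{LZZ}), so that K-semistability forces $\mathcal L(f)\geq0$ for all such $f$, and one then upgrades to the open condition. Without this step your argument proves only the vanishing of the central component of $bar(2P_+)-4\rho$, which is strictly weaker than \eqref{bary}.
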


Another proof of Theorem \ref{de} was given by Li, Zhou and Zhu  \cite{LZZ}. They also showed  that (\ref{bary}) is actually equivalent to the $K$-stability condition in terms of \cite {Ti97} and \cite{Do02} by constructing $\mathbb C^*$-action through piecewisely rationally linear function which is invariant under the Weyl group action. In particular, it implies that $M$ is $K$-unstable if $bar(2P_+) \not\in \overline { 4\rho+\Xi}$.
A more general construction of $\mathbb C^*$-action was also discussed in \cite{Del3}.

In the present paper, we extend the above theorem to $\mathbb Q$-Fano compactifications  of $G$ which may be singular.
It is well known that any $\mathbb Q$-Fano compactification of $G$ has  klt-singularities \cite{AK}.
For a $\mathbb Q$-Fano variety  $M$ with klt-singularities, there is naturally a class of admissible K\"ahler metrics induced by the Fubini-Study metric (cf. \cite{DT}).
In \cite{BBEGZ}, Berman, Boucksom, Eyssidieux, Guedj and Zeriahi introduce a class of K\"ahler potentials associated to admissible K\"ahler metrics and refer it as the $\mathcal E^1(M,-K_M)$ space.  Then they define the singular K\"ahler-Einstein metric on $M$ with the  K\"ahler potential in $\mathcal E^1(M,-K_M)$ via the complex Monge-Amp\`ere equation, which is the usual K\"ahler-Einstein metric on the smooth  part of $M$.  It is an natural problem to
establish an extension of the Yau-Tian-Donaldson conjecture we have solved for smooth Fano manifolds
\cite{Ti97, Ti15}, that is,
an equivalence relation between the existence of such singular K\"ahler-Einstein metrics and the $K$-stability on  a $\mathbb Q$-Fano variety $M$ with
klt-singularities.
There are many recent works on this fundamental problem. We refer the readers to \cite{Berman-Berndtsson, BBJ15, LTW17, LTW19, Li19}, etc..

In this paper, we will assume that the moment polytope $P$ of $Z$ is fine in sense of \cite{Do}, namely, each vertex of $P$ is the intersection of precisely $r$ facets. We will prove

\begin{theo}\label{LTZ2}  Let $M$ be a $\mathbb Q$-Fano compactification of $G$ such that the moment polytope $P$  of $Z$ is fine.
Then $M$ admits a singular K\"ahler-Einstein metric if and only if (\ref{bary}) holds.
\end{theo}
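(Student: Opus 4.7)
The plan is to attack both directions via the variational framework of Berman--Boucksom--Eyssidieux--Guedj--Zeriahi \cite{BBEGZ}, in which a singular K\"ahler--Einstein metric on $M$ corresponds to a minimizer of the Ding functional $\mathcal{D}$ on $\mathcal{E}^1(M,-K_M)$, and existence is equivalent to $G$-equivariant properness of $\mathcal{D}$ modulo the action of $\Aut^0(M)$. Following \cite{Del2} and \cite{LZZ}, the $K\times K$-invariance of the Fubini--Study reference on the $G$-compactification $M$ lets us restrict to the space of $K\times K$-invariant admissible potentials, which are parametrized by $W$-invariant convex functions $\psi$ on $\mathfrak a$ whose Legendre transforms live on $P$.

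For the necessity of (\ref{bary}), I would reuse the test configurations constructed in \cite{LZZ} from $W$-invariant piecewise rational linear functions on $P$. When $M$ is only $\mathbb{Q}$-Fano, these functions still yield $\mathbb{C}^*$-equivariant degenerations of $M$ after clearing denominators; the Ding invariant (equivalently the generalized Futaki invariant in the sense of \cite{BBEGZ}) of such a test configuration can be computed as a weighted integral over $2P_+$ against $\pi(y)dy$, and turns out to be a linear functional pairing $\mathrm{bar}(2P_+)-4\rho$ with a vector in $-\overline{\Xi}$. If (\ref{bary}) fails, this produces a destabilizing test configuration, which is incompatible with a singular K\"ahler--Einstein metric by the obstruction theory of \cite{BBEGZ, Berman-Berndtsson}.

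The main work is sufficiency. Assuming (\ref{bary}), I would introduce the \emph{reduced Ding functional} $\mathcal{D}^{\mathrm{red}}$, which after the $K\times K$-reduction takes the Delcroix form
\[
\mathcal{D}^{\mathrm{red}}(\psi)=-\log\!\int_{\mathfrak a} e^{-\psi(x)}J(x)\,dx+\mathcal{L}(\psi),
\]
where $J$ is the Jacobian determinant from the $KAK$-decomposition and $\mathcal{L}$ is a linear term whose slope at infinity is controlled by $\int_{2P_+}\!y\,\pi(y)\,dy-4\rho\int_{2P_+}\!\pi(y)\,dy$. Combining the convex-analytic argument of \cite{Del2} with the polytope estimates of \cite{LZZ}, the condition (\ref{bary}) is precisely what guarantees that $\mathcal{D}^{\mathrm{red}}$ is proper modulo translations of $\psi$ by characters coming from the center of $\Aut^0(M)$. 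The fineness of $P$ enters here: it ensures that the toric slice $Z$ is $\mathbb{Q}$-factorial, so that the integration-by-parts identities relating the Monge--Amp\`ere energy and the Ding energy to weighted polytope integrals remain valid in the singular setting.

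Once properness of $\mathcal{D}^{\mathrm{red}}$ on smooth invariant potentials is in hand, I would upgrade it to properness of $\mathcal{D}$ on the $K\times K$-invariant subspace of $\mathcal{E}^1(M,-K_M)$ by approximation by smooth potentials and lower semicontinuity of $\mathcal{D}$; existence of a minimizer then follows from the direct method in \cite{BBEGZ}, and the minimizer is the desired singular K\"ahler--Einstein metric. In my view the principal technical obstacle will be the reduction step from the full $\mathcal{E}^1(M,-K_M)$ to the $K\times K$-invariant subspace in the singular case: one needs a $G$-equivariant version of the BBEGZ uniqueness-modulo-automorphisms result, together with enough regularity of the minimizing $\psi$ to identify its Legendre transform with a genuine convex function on $P$ against which the barycenter condition can be tested.
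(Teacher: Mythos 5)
Your overall architecture (Legendre duality, a reduced Ding functional, properness from the barycenter condition, lower semicontinuity, direct method) matches the paper's, but two of your key steps have genuine gaps. First, you misidentify why fineness of $P$ is needed. It is not about the validity of integration-by-parts identities: the $KAK$-integration formula and the weighted Monge--Amp\`ere measures make sense for any $\mathbb Q$-Fano compactification. Fineness is used for two things: (i) to show the Guillemin potential $\psi_{2P}$ gives a reference metric $\omega_{2P}$ lying in $\mathcal E^1_{K\times K}(M,-K_M)$ (Lemma \ref{admissible}), and (ii), crucially, to show by an explicit asymptotic analysis near $\partial(2P_+)$ that the Ricci potential $h_0$ of $\omega_{2P}$ is bounded \emph{from above} (Proposition \ref{ricci-potential}). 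This upper bound is the linchpin of the properness proof: the nonlinear term $\mathcal F$ is controlled by comparing $\mathcal D$ with an auxiliary functional $\mathcal D_A=\mathcal L^0_A+\mathcal F$ whose weight $A\propto e^{h_0}$ must be bounded and whose critical point is the Guillemin potential; convexity of $\mathcal F$ along Legendre-linear paths (Pr\'ekopa--Leindler) then gives $\mathcal D_A(u)\ge\mathcal D_A(u_0)$, and only from there does the argument of \cite{LZ} yield coercivity. Your claim that (\ref{bary}) combined with ``the convex-analytic argument of \cite{Del2}'' gives properness elides exactly this point: on a singular compactification $h_0$ can tend to $-\infty$ near the boundary (Remark \ref{h-remark}), and without the upper bound the lower control of $\mathcal F$ breaks down.

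Second, the equivalence ``existence $\Leftrightarrow$ $G$-equivariant properness of $\mathcal D$ modulo $\Aut^0(M)$'' that you want to quote from \cite{BBEGZ} is not available there in the presence of nontrivial holomorphic vector fields; the paper explicitly notes this and works around it. One instead minimizes $\mathcal D$ directly on the invariant space $\mathcal E^1_{K\times K}(2P)$ (normalized so that $u\ge u(O)=0$, which removes the translations), and then verifies by a first-variation computation that the minimizer solves (\ref{singular-ke-equation}) (Proposition \ref{minimizer-in-E-prop}). That last step is itself nontrivial, because the minimizer $u_\star$ is only weakly convex, so $u_\star+t\eta$ need not be admissible; one needs the Berman--Berndtsson envelope trick $\hat\psi_t=\sup\{\psi_\phi\,|\,\psi_\phi\le\psi_\star+t\eta\}$ and a separate differentiability argument for $t\mapsto\mathcal L(\hat u_t)$. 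Consequently the ``principal technical obstacle'' you name --- reducing from the full $\mathcal E^1(M,-K_M)$ to the invariant subspace via an equivariant uniqueness theorem --- never arises, while the obstacles you do not name (the upper bound on $h_0$, and the Euler--Lagrange identification for a merely weakly convex minimizer) are where the real work lies.
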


By a result of  Abreu  \cite{Ab-JDG},  the polytope $P$ of $Z$ being fine is equivalent to that the metric induced by the Guillemin function can be extended
to a  K\"ahler orbifold metric on $Z$. \footnote{It can not be guaranteed that the $G$--compactification is smooth even if  $Z$ is smooth \cite{AK}.}
It follows from the fineness assumption of $P$ in Theorem \ref{LTZ2} that the Guillemin function of $2P$ induces a $K\times K$-invariant singular metric $\omega_{2P}$ in  $\mathcal E^1(M,-K_M)$ (cf. Lemma \ref{admissible}). Moreover, we can prove that the Ricci potential of $\omega_{2P}$ on $M$ is uniformly bounded   above. We note that $P$ is always fine when rank$(G)=2$ \cite[Chapter 3]{Fulton}.  Thus  for a $\mathbb Q$-Fano compactification of $G$ with rank$(G)=2$, $M$ admits a singular K\"ahler-Einstein metric if and only if  \eqref{bary} holds.  As an application of Theorem \ref{LTZ2},
we show that there is only one example of
non-smooth Gorenstein Fano  $SO_4(\mathbb C)$-compactifications which admits a singular K\"ahler-Einstein metric (cf. Section 7.1).

On the other hand, it has been shown in \cite{Del2} and \cite{LTZ} that there are only three  smooth Fano compactifications  of $SO_4(\mathbb C)$,  i.e., \emph{Case-1.1.2}, \emph{Case-1.2.1} and \emph{Case-2} in Section \ref{sect-6-1}.
The first two manifolds do not admit any K\"ahler-Einstein metric. By Theorem \ref{LTZ2}, we further  prove

\begin{theo}\label{LTZ3}
There is no $\mathbb Q$-Fano compactification of $SO_4(\mathbb C)$ which admits a singular K\"ahler-Einstein metric with  the same volume  as Case-1.1.2 or {Case-1.2.1} in  Section 7.1.
\end{theo}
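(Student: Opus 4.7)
The strategy is to reduce to Theorem~\ref{LTZ2} and then carry out an explicit finite classification. Since $\mathrm{rank}(SO_4(\mathbb C))=2$, any moment polytope $P$ of a $\mathbb Q$-Fano $SO_4(\mathbb C)$-compactification is automatically fine, so by Theorem~\ref{LTZ2} such an $M$ admits a singular K\"ahler-Einstein metric if and only if $bar(2P_+)\in 4\rho+\Xi$. To establish Theorem~\ref{LTZ3} it therefore suffices to enumerate all $\mathbb Q$-Fano $SO_4(\mathbb C)$-compactifications whose anti-canonical volume coincides with that of Case-1.1.2 or Case-1.2.1, and to verify that for every one of them $bar(2P_+)-4\rho$ fails to lie in $\Xi$.

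\smallskip
For the enumeration I would use the standard polytope dictionary for $G$-compactifications. Writing $\Phi_+=\{\alpha_1,\alpha_2\}$ for the positive roots of $SO_4(\mathbb C)$, so that $\pi(y)=\alpha_1(y)^2\alpha_2(y)^2$ and $\rho=\tfrac12(\alpha_1+\alpha_2)$, the positive part $P_+$ is a planar polygon whose facets are supported either by the two walls of the positive Weyl chamber or by affine hyperplanes corresponding to $G$-stable prime divisors of $M$; the $\mathbb Q$-Fano condition fixes the supporting hyperplanes of the latter in terms of finitely many positive rational parameters. The anti-canonical volume of $M$ equals, up to a universal constant, the weighted volume $\int_{2P_+}\pi(y)\,dy$. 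For the two smooth reference cases I would write down $P_+^{(1.1.2)}$ and $P_+^{(1.2.1)}$ explicitly from Section~\ref{sect-6-1} and compute the reference volumes $V_1$ and $V_2$ as rational numbers. Since $P_+\subset\mathbb R^2$ has bounded combinatorial type (only finitely many admissible edge directions are dictated by the colored fan of $SO_4(\mathbb C)$), the locus of $\mathbb Q$-Fano polytopes with $\int_{2P_+}\pi\,dy\in\{V_1,V_2\}$ is cut out by a small system of polynomial equations on the vertex coordinates, which I would solve case by case to produce a finite explicit list of candidate polytopes.

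\smallskip
For each candidate $P$ on this list, the integrals $\int_{2P_+}\pi(y)\,dy$ and $\int_{2P_+}y\,\pi(y)\,dy$ are rational in the vertex coordinates, so $bar(2P_+)$ is written down in closed form and I would check that at least one of its coordinates in the simple-root basis $(\alpha_1,\alpha_2)$, after subtracting $4\rho=2(\alpha_1+\alpha_2)$, is non-positive. Theorem~\ref{LTZ2} then excludes a singular K\"ahler-Einstein metric on $M$. The main obstacle is not the soft reduction, which is immediate, but the two combinatorial tasks that feed into it: first, cataloguing all admissible $\mathbb Q$-Fano polytopes of $SO_4(\mathbb C)$ with the prescribed weighted volume (and in particular confirming that the volume constraint is restrictive enough to produce only a short list of candidates, rather than a continuous family); and second, handling efficiently the case analysis in the final barycenter check so that each candidate can be dispatched by a single explicit inequality on the rational data of its polytope.
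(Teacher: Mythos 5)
Your soft reduction is the same as the paper's: since $\mathrm{rank}(SO_4(\mathbb C))=2$ the polytope is automatically fine, so Theorem~\ref{LTZ2} converts the problem into a purely combinatorial statement about moment polytopes. The gap is in the enumeration step. You propose to first list all $\mathbb Q$-Fano polytopes with weighted volume equal to $V_1$ or $V_2$ and then check the barycenter condition on each, and you justify finiteness of that list by asserting that $P_+$ has ``bounded combinatorial type'' with ``only finitely many admissible edge directions.'' That assertion is false: the outer edges may lie on any line $l_{p,q}=(1+2p)-(px+qy)=0$ with $(p,q)$ coprime and $p\geq|q|$, so there are infinitely many admissible edge directions and no a priori bound on the number of edges. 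Worse, the volume constraint alone does not obviously cut this down to a finite set, because $\mathrm{Vol}(\{l_{p,q}\geq 0\}\cap\mathfrak a_+)=\tfrac{8(1+2p)^6}{45(p^2-q^2)^3}$ stays large when $q$ is close to $p$, so polytopes of any prescribed volume can occur for arbitrarily large $p$. Your proposed order of operations (volume first, barycenter second) therefore does not terminate as stated.

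The paper resolves this by reversing the order: it first exploits the K\"ahler--Einstein condition to constrain the shape, and only then uses the volume. Concretely, it attaches to each polytope the integer $p_0$ determined by the intersection $(2+\tfrac1{p_0},0)$ of $\partial P_+$ with the $x$-axis and the corresponding edge datum $(p_0,q_0)$; the barycenter condition forces $\bar c(P_+)>2$, while an explicit computation of the weighted barycenter of the bounding triangle $\{l_{p_0,q_0}\geq0\}\cap\mathfrak a_+$ gives $\bar c(P_+)\leq\tfrac{6p_0+3}{2p_0+2q_0}$, hence $q_0<\tfrac12 p_0+\tfrac34$. Feeding this inequality into the volume bound above makes $\mathrm{Vol}(P_+)$ decay in $p_0$, so that for $p_0\geq 9$ the volume drops below both $V_1$ and $V_2$; the finitely many remaining values $p_0\leq 8$ are then handled by a case-by-case check. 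To repair your argument you would need to supply an analogue of this interplay between the barycenter condition and the volume (or some other mechanism guaranteeing finiteness of the candidate list); without it the central enumeration is not justified.
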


Theorem \ref{LTZ3} gives a partial answer to a question proposed in \cite{LTZ} about limit of K\"ahler-Ricci flow on either \emph{Case-1.1.2} or  \emph{Case-1.2.1}.  It has been proved there that
the flow  has type II  singularities on each of    \emph{Case-1.1.2} and    \emph{Case-1.2.1} whenever the initial metric is $K\times K$-invariant. By the Hamilton-Tian conjecture \cite{Ti97, Bam, CW},  the limit should be a $\mathbb Q$-Fano variety with a singular K\"ahler-Ricci soliton of the same volume as that of initial metric.  However, by Theorem \ref{LTZ3}, the limit can not
be a $\mathbb Q$-Fano compactification of $SO_4(\mathbb C)$ with a singular K\"ahler-Einstein metric. This implies that the limiting soliton
will has less homogeneity than the initial one, which is totally different from the situation of smooth convergence of $K\times K$-metrics on a smooth compactification of  Lie group \cite{LTZ}.

As in \cite{BBEGZ}, we use the variation method to prove Theorem \ref{LTZ2}, more precisely, we will prove that a modified version of the
Ding functional $\mathcal D(\cdot)$ is proper under the condition (\ref{bary}).  This functional is defined for a class of convex functions
 $\mathcal E^1_{K\times K}(2P)$ associated to $K\times K$-invariant metrics on the orbit of $G$ (cf. Section 4, 6).
The key point is that the Ricci potential $h_0$ of the Guillemin metric $\omega_{2P}$ is bounded from above when $P$ is fine (cf. Proposition \ref{ricci-potential}).  This enables us to control the nonlinear part $\mathcal F(\cdot)$ of $\mathcal D(\cdot)$ by modifying $\mathcal D(\cdot)$ as done in  \cite {Do02, LZ} (cf. Section 6.1). We shall note that it is in general impossible to get a lower bound of $h_0$ if the compactification is a singular variety (cf. Remark \ref{h-remark}).
On the other hand, we expect that the ``fine" condition in Theorem \ref{LTZ2} can be dropped.

The minimizer of $\mathcal D(\cdot)$ corresponds to a singular K\"ahler-Einstein metric.
We will prove  the semi-continuity of $\mathcal D(\cdot)$ and derive the K\"ahler-Einstein equation for the minimizer (cf. Proposition \ref{minimizer-in-E-prop}).  Our proof is similar to what Berman and Berndtsson studied on toric varieties in \cite{Berman-Berndtsson}.

The proof of the necessity part of Theorem \ref{LTZ2} is same as one in Theorem \ref{de}. In fact,  a $\mathbb Q$-Fano compactification of $G$  is $K$-unstable if $bar(2P_+) \not\in \overline {4\rho+\Xi}$   \cite{LZZ}.  This will be a contradiction to the semi-stability of $\mathbb Q$-Fano variety with a singular  K\"ahler-Einstein metric (cf. \cite{LTW17}). We will omit this part.

The organization of paper is as follows.  In Section 2, we  recall some notations  in  \cite{BBEGZ} for singular K\"ahler-Einstein metrics on
$\mathbb Q$-Fano varieties. In Section 3, we  introduce   a subspace    $\mathcal E^1_{K\times K}(M,-K_M)$ of
 $\mathcal E^1(M,-K_M)$ and prove that the Guillemin function lies in this space (cf. Lemma \ref{admissible}).  In Section 4, we prove that
$\mathcal E^1_{K\times K}(M,-K_M)$ is equivalent to  a dual space  $\mathcal E^1_{K\times K}(2P)$ of Legendre functions (cf. Theorem \ref{E1-legendre}). In Section 5,  we compute   the Ricci potential $h_0$ of $\omega_{2P}$ and  show that it is bounded from above (cf. Proposition \ref{ricci-potential}).
The sufficient part of Theorem \ref{LTZ2} will be proved in Section 6.  In Section 7,  we construct many $\mathbb Q$-Fano compactifications of ${SO}_4(\mathbb C)$ and in particular, we will prove Theorem \ref{LTZ3}.

\section{Preliminary on  $\mathbb Q$-Fano  varieties}

For a $\mathbb Q$-Fano variety $M$, by Kodaira's embedding theorem, there is an integer $\ell > 0$ such that we can embed $M$ into a projective space $\mathbb {CP}^N$ by a basis of $H^0(M, K_M^{-\ell})$, for simplicity, we assume $M\subset \mathbb {CP}^N$. Then we have a metric
$$\omega_0\,=\,\frac{1}{\ell}\,\omega_{FS}|_{M}\,\in \,2\pi c_1(M),$$
where $\omega_{FS}$ is the Fubini-Study metric of $\mathbb {CP}^N$.
Moreover, there is a Ricci potential $h_0$ of $\omega_0$ such that
$${\rm Ric}(\omega_0)-\omega_0\,=\,\sqrt{-1}\partial\bar\partial h_0, ~{\rm on} ~M_{\rm reg}.$$
In the case that $M$ has only
klt-singularities, $e^{h_0}$ is $L^p$-integrate for some $p>1$ (cf. \cite{DT, BBEGZ}). We call $\omega$ an admissible K\"ahler metric on $M$ if there are an embedding $M\subset \mathbb {CP}^N$ as above and a smooth function $\varphi$ on $ \mathbb {CP}^N$ satisfying:
$$\omega\,=\,\omega_{\varphi}|_{ \mathbb {CP}^N}\,:=\,\omega_0+\sqrt{-1}\,\partial\bar\partial \varphi.$$
In particular, $\varphi$ is a function on $M$  with $\varphi\in L^\infty(M)\cap C^\infty(M_{\text{reg}})$, called an admissible K\"ahler potential associated to $\omega_0$. \footnote{For simplicity, we will denote a K\"ahler metric by its K\"ahler form thereafter.}

For a general (possibly unbounded) K\"ahler potential $\varphi$,  we define its complex Monge-Amp\`ere measure $\omega_{\varphi}^n$ by
$$\omega_{\varphi}^n\,=\,\lim_{j\to \infty}\,\omega_{\varphi_j}^n,$$
where $\varphi_j\,=\,{\rm max}\{\varphi,-j\}$.
According to \cite{BBEGZ}, we say that $\varphi$ (or $\omega_{\varphi}^n$)  has  full Monge-Amp\'ere (MA) mass if
$$\int_M \omega_{\varphi}^n\,=\,\int_M \omega_0^n.$$
The MA-measure $\omega_{\varphi}^n$ with a full MA-mass has no mass on the pluripolar set of $\varphi$ in $M$. Thus we need to consider the
measure on  $M_{\rm reg}$.  Moreover, $e^{-\varphi}$ is $L^p$-integrable for any $p>0$ associated to $\omega_0^n$.

\begin{defi}\label{singular-ke} We call $\omega_{\varphi}$ a (singular) K\"ahler-Einstein metric on $M$ with full MA-mass if
$\varphi$ satisfies the following  complex Monge-Amp\'ere equation,
\begin{align}\label{singular-ke-equation}
\omega_{\varphi}^n\,=\,e^{h_0-\varphi}\omega_0^n.
\end{align}
\end{defi}

It has been shown in \cite{BBEGZ} that $\varphi$ is $C^\infty$ on $M_{\rm reg}$ if it is a solution of  (\ref{singular-ke-equation}).  Thus $\omega_{\varphi}$ satisfies the K\"ahler-Einstein equation on $M_{\rm reg}$,
$${\rm Ric}(\omega_{\varphi})\,=\,\omega_{\varphi}.$$

\subsection{The space $\mathcal E^1(M,-K_M)$ and the Ding functional}

On a smooth Fano manifold, there is a well-known Euler-Langrange functional for K\"ahler potentials associated to (\ref{singular-ke-equation}),  often referred as the Ding functional or F-functional, defined by (cf. \cite{Di88, Ti87})
\begin{align}\label{ding-functional}
 F(\phi)\,=\,-\frac1{(n+1)V}\sum_{k=0}^n\,\int_M\phi\omega_\phi^k\wedge\omega_0^{n-k}-\log\left(\frac1V\int_Me^{h_0-\phi}\omega_0^n\right).
\end{align}
In case of $\mathbb Q$-Fano manifold with klt-singularities, Berman, Boucksom, Eyssidieux, Guedj and Zeriahi \cite{BBEGZ} extended $F(\cdot)$ to
the space $\mathcal E^1(M,-K_M)$ defined by
\begin{align}\mathcal E^1(M,-K_M)\,=\,\{\phi| ~&\phi ~{\rm has  ~a ~full ~MA~mass~and}  \notag\\
& \sup_M\phi =0,~ I(\phi)=\int_M-\phi\omega_\phi^n<\infty\}.\notag
\end{align}
They showed that $\mathcal E^1(M,-K_M)$ is compact in certain weak topology. By a result of Davas \cite{DT},
$\mathcal E^1(M,-K_M)$ is in fact compact in the topology of $L^1$-distance. It provides a variational approach to study (\ref{singular-ke-equation}).

\begin{defi}\label{properdef}\cite{Ti97,BBEGZ}  The functional $F(\cdot)$ is called proper if there is a continuous function $p(t)$ on $\mathbb R$ with the property
$\lim_{t\to+\infty} p(t)\,=\,+\infty$, such that
\begin{equation}\label{proper}
F(\phi)\,\ge\, p( I(\phi)),~\forall \phi  \in \mathcal E^1(M,-K_M).
\end{equation}

\end{defi}
In \cite{BBEGZ}, Berman, Boucksom, Eyssidieux, Guedj and Zeriahi proved the existence of solutions for (\ref{singular-ke-equation}) under the properness assumption (\ref{proper}) of $F(\cdot)$.  However, this assumption does not hold in the case of existence
of non-zero holomorphic vector fields such as in our case of $\mathbb Q$-Fano $G$-compactifications. So we need to consider the reduced Ding functional instead to overcome this new difficulty as done on toric varieties \cite{Berman-Berndtsson, LZ}.

\section{Moment polytope and $K\times K$-invariant metrics}

Let $M$ be a $\mathbb Q$-Fano compactification of $G$ with $Z$ being the closure of a maximal complex torus $T^{\mathbb C}$-orbit.
We  first  characterize the moment polytope $P$  of $Z$ associated to $(M, K_M^{-1})$.  Let $\{F_A\}_{A=1,...,d_0}$ be the facets of $P$ and $\{F_A\}_{A=1,...,d_+}$ be those whose interior intersects $\mathfrak a_+^*$. Suppose that
\begin{align}\label{polytope-eq}
P=\cap_{A=1}^{d_0}\{l_A^o:=\lambda_{A}-u_A(y)\geq0\}
\end{align}
for some prime vector $u_A\in\mathfrak N$ and the facet
$$F_A\subseteq\{l^o_{A}=0\},A=1,...,d_0.$$
Let $W$ be the Weyl group of $(G,T^{\mathbb C})$. By  the $W$-invariance, for each $A\in\{1,...,d_0\}$, there is some $w_A\in W$ such that $w_A(F_A)\in\{F_B\}_{B=1,...,d_+}$. Denote by $\rho_A\,=\,w_A^{-1}(\rho)$, where $\rho\in\mathfrak a_+^*$ is given by (\ref{rho}). Then $\rho_A(u_A)$ is independent of the choice of $w_A\in W$ and hence it is well-defined.

The following is due to  \cite{Brion}.

\begin{lemm}\label{polytope-coefficient}
Let $M$ be a $\mathbb Q$-Fano compactification of $G$ with $P$ being the associated moment polytope. Then for each $A=1,...,d_0$, it holds
\begin{align}\label{relation-lambda}\lambda_{A}\,=\,1+2\rho_A(u_A).
\end{align}
\end{lemm}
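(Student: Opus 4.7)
The plan is to derive the formula by invoking Brion's description of the anticanonical class of a spherical variety and restricting the resulting divisor identity to the toric closure $Z$. On the $G\times G$-compactification $M$, Brion writes $-K_M$ as a sum of $B\times B$-stable prime divisors,
$$-K_M \,=\, \sum_{A=1}^{d_+} D_A \,+\, \sum_{D\ \text{color}} m_D\cdot D,$$
where the $D_A$ are the $G\times G$-invariant prime divisors (one for each facet $F_A\subset\overline{\mathfrak a^*_+}$, $A\le d_+$), the colors are the closures of the codimension-one $B\times B$-orbits in $G$, and the coefficients $m_D$ are determined explicitly in terms of the positive root system so that the total color contribution globalizes to $2\rho$.

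Next, I would restrict the identity to $Z$ and identify $\lambda_A$ via the toric dictionary $\lambda_A = \mathrm{mult}_{D^{\mathrm{tor}}_A}(-K_M|_Z)$. By the $W$-symmetry of the fan of $Z$, each $G$-invariant divisor $D_A|_Z$ for $A\le d_+$ is the sum $\sum_{A'\in W\cdot A}D^{\mathrm{tor}}_{A'}$ over its Weyl orbit in the toric primes. Collecting terms, $\sum_{A=1}^{d_+}D_A|_Z = \sum_{A=1}^{d_0}D^{\mathrm{tor}}_A$, which contributes the constant $1$ to every $\lambda_A$. For the color contributions I would apply the local structure theorem for group compactifications: the color associated to $\alpha\in\Phi_+$ meets $Z$ in the toric divisor whose multiplicity along $D^{\mathrm{tor}}_A$ is $\alpha(u_A)$ for $A\le d_+$, with the Weyl-equivariant analogue $(w_A^{-1}\alpha)(u_A)$ for general $A$. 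Summing with Brion's weights and using $2\rho=\sum_{\alpha\in\Phi_+}\alpha$ produces the color term $2\rho_A(u_A)$ at $F_A$, and combining gives $\lambda_A = 1 + 2\rho_A(u_A)$.

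The main technical obstacle is the color computation, i.e., showing that the total color contribution to $\mathrm{mult}_{D^{\mathrm{tor}}_A}(-K_M|_Z)$ equals precisely $2\rho_A(u_A)$. This requires combining Brion's explicit formulas for the $m_D$ with the local structure theorem for $G$-compactifications, carried out in the open $B\times B$-cell and in a neighborhood of the toric boundary of $Z$. Once the positive-chamber case $A\le d_+$ is settled, the extension to arbitrary $A\in\{1,\dots,d_0\}$ is automatic from the $W$-invariance of $P$, the $W$-orbit structure of the primitive normals $\{u_A\}$, and the definition $\rho_A=w_A^{-1}\rho$, which is why $\rho_A(u_A)$ is well-defined independently of the choice of $w_A$.
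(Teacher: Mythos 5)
Your proposal follows essentially the same route as the paper: both rest on Brion's description of the anticanonical divisor of a group compactification and its restriction to the toric closure $Z$, from which the coefficients $\lambda_A=1+2\rho_A(u_A)$ are read off via the toric dictionary. The paper simply quotes the restricted formula $-mK_M|_Z=\sum_A m(1+2\rho_A(u_A))D_A$ directly from [Brion, Section 3], whereas you propose to re-derive it from the color decomposition and the local structure theorem — more work, but the same underlying argument and conclusion.
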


\begin{proof}
Suppose that $-mK_M$ is a Cartier divisor for some  $m\in\mathbb N$. Then by \cite[Section 3]{Brion},
$$-mK_M|_Z\,=\,\sum_Am(1+2\rho_A(u_A))D_A,$$
where $D_A$ is the toric divisor of $Z$ associated to $u_A$. Thus the associated polytope of
 $(Z,-mK_M|_Z)$ is given by
$$P(Z,-mK_M|_Z)\,=\,\cap_{A=1}^{d_0}\{m(1+2\rho_A(u_A))-u_A(y)\geq0\},$$
which is precisely $mP$.  Thus (\ref{relation-lambda}) is true.
\end{proof}

\subsection{$K\times K$-invariant metrics }

On a $\mathbb Q$-Fano compactification of $G$, we may regard the $G\times G$ action as a subgroup of $PGL_{N+1}(\mathbb C)$ which acts holomorphically on the hyperplane bundle $L\,=\,{\mathcal O}_{\mathbb CP^N}(-1)$. Then any admissible $K\times K$-invariant K\"ahler metric
 $\omega_\phi\in\frac{ 2\pi }{\ell} c_1(L)$  can be regarded as a restriction of $K\times K$-invariant K\"ahler metric of $\mathbb  CP^N$. Thus the moment polytope $P$ associated to $(Z, L|_Z)$
is a $W$-invariant rational polytope in $\mathfrak a^*$.
By the $K\times K$-invariance, the restriction of $\omega_\phi$ on $T^{\mathbb C}$ is an open toric K\"ahler metric.
Hence, it induces  a strictly convex, $W$-invariant function $\psi_\phi$ on ${\mathfrak a}$  \cite{AL} (also see Lemma \ref{Hessian} below)  such that
\begin{align}\label{convex-potential}\omega_\phi\,=\,\sqrt{-1}\partial\bar\partial \psi_\phi, ~{\rm on}~ T^{\mathbb C}.
\end{align}

By the $KAK$-decomposition (\cite[Theorem 7.39]{Kna2}), for any $g\in G$,
there are $k_1,\,k_2\in K$ and $x\in\mathfrak a$ such that $g=k_1\exp(x)k_2$. Here $x$ is uniquely determined up to a $W$-action. This means that $x$ is unique in $\overline{\mathfrak a_+}$.
Thus there is a bijection between $K\times K$-invariant functions $\Psi$ on $G$ and $W$-invariant functions  $\psi$ on
$\mathfrak a$ which is given  by
$$\Psi( \exp(\cdot))\,=\,\psi(\cdot):~{\mathfrak a}\to\mathbb R.$$
Clearly, when a $W$-invariant $\psi$ is given, $\Psi$ is well-defined.  Without of  confusion,  we will not distinguish $\psi$ and $\Psi$,  and  call $\Psi$ convex on $G$ if  $\psi$  is convex on ${\mathfrak a}$.

The following $KAK$-integral formula can be found in \cite[Proposition 5.28]{Kna2}.

\begin{prop}\label{KAK int}
Let $dV_G$ be a Haar measure on $G$ and $dx$ the Lebesgue measure
on $\mathfrak{a}$.
Then there exists a constant $C_H>0$ such that for any
$K\times K$-invariant, $dV_G$-integrable function $\psi$ on $G$,
$$\int_G \psi(g)\,dV_G\,=\, C_H\,\int_{\mathfrak{a}_+}\psi(x)\mathbf{J}(x)\,dx,$$
where
$$\mathbf J(x)\,=\,\prod_{\alpha \in \Phi_+} \sinh^2\alpha(x).$$
\end{prop}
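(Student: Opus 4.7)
The plan is to derive this by change-of-variables applied to the $KAK$ decomposition $\Phi\colon K\times\overline{\mathfrak a_+}\times K\to G$, $\Phi(k_1,x,k_2)=k_1\exp(x)k_2$. First I would check that $\Phi$ is surjective (the $KAK$ decomposition of \cite[Theorem 7.39]{Kna2}, cited just before the proposition) and that on $K\times\mathfrak a_+\times K$ its fibers are exactly the orbits of the free $T$-action $t\cdot(k_1,x,k_2)=(k_1t^{-1},x,tk_2)$; this uses that $x\in\mathfrak a_+$ is regular, so its centralizer in $K$ is $T$, together with the uniqueness of $x\in\overline{\mathfrak a_+}$ up to the Weyl group action. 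Consequently $\Phi$ descends to a diffeomorphism from $(K\times\mathfrak a_+\times K)/T$ onto an open dense full-measure subset of $G$.

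Next I would compute the Jacobian of $\Phi$ at a point $(e,x,e)$ (the general case reduces to this by bi-invariance of $dV_G$). After left-translating the differential back to the identity, the tangent map reads
$$\mathfrak k\oplus\mathfrak a\oplus\mathfrak k\longrightarrow\mathfrak g_{\mathbb R},\qquad (Y_1,\delta x,Y_2)\longmapsto e^{-\mathrm{ad}(x)}Y_1+\delta x+Y_2.$$
I would decompose the source and target via the root decomposition $\mathfrak g=\mathfrak t^{\mathbb C}\oplus\bigoplus_{\alpha\in\Phi_+}(\mathfrak g_\alpha\oplus\mathfrak g_{-\alpha})$ and the compact real form $\mathfrak k=\mathfrak t\oplus\bigoplus_{\alpha>0}\mathfrak k_\alpha$, where $\mathfrak k_\alpha$ is the real span of $X_\alpha=E_\alpha-E_{-\alpha}$ and $Y_\alpha=i(E_\alpha+E_{-\alpha})$. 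Since $\mathrm{ad}(x)$ acts on $\mathfrak g_{\pm\alpha}$ as scalar multiplication by $\pm\alpha(x)$, a direct calculation gives
$$e^{-\mathrm{ad}(x)}X_\alpha=\cosh\alpha(x)\,X_\alpha+\sinh\alpha(x)\,(iY_\alpha),\quad e^{-\mathrm{ad}(x)}Y_\alpha=\cosh\alpha(x)\,Y_\alpha-\sinh\alpha(x)\,(iX_\alpha).$$
Root-by-root, the 4-real-dimensional block $\mathfrak k_\alpha^{(1)}\oplus\mathfrak k_\alpha^{(2)}\to\mathfrak g_\alpha\oplus\mathfrak g_{-\alpha}$ then has determinant $\sinh^2\alpha(x)$; on the Cartan part, the kernel $\{(H,0,-H):H\in\mathfrak t\}$ accounts precisely for the $T$-fiber direction, and after quotienting, the induced map $\mathfrak a\oplus\mathfrak t\to\mathfrak a\oplus\mathfrak t$ is the identity. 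Taking the product over $\alpha\in\Phi_+$ yields the Jacobian $\mathbf J(x)=\prod_{\alpha\in\Phi_+}\sinh^2\alpha(x)$.

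Finally, for any $K\times K$-invariant integrable function $\psi$, the value $\psi(k_1\exp(x)k_2)=\psi(\exp(x))$ is independent of $(k_1,k_2)$, so integrating out $K\times K$ against the normalised Haar measure and dividing by $\mathrm{vol}(T)$ gives
$$\int_G\psi\,dV_G=C_H\int_{\mathfrak a_+}\psi(x)\,\mathbf J(x)\,dx,\qquad C_H=\frac{\mathrm{vol}(K)^2}{\mathrm{vol}(T)},$$
up to a normalising constant depending on the chosen Haar measure on $G$ and Lebesgue measure on $\mathfrak a$. The main obstacle is the Jacobian bookkeeping in the root-space calculation: one has to correctly identify which components of $e^{-\mathrm{ad}(x)}\mathfrak k_\alpha$ lie in $\mathfrak k_\alpha$ versus $i\mathfrak k_\alpha$ and then confirm that the second $\mathfrak k$-factor fills in the complementary $\mathfrak k_\alpha$ directions so that the determinant of the combined $4\times 4$ block is exactly $\sinh^2\alpha(x)$; everything else is essentially measure-theoretic accounting.
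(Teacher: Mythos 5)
Your proposal is correct: the fiber analysis of the $KAK$ map over regular elements, the left-translated differential $(Y_1,\delta x,Y_2)\mapsto e^{-\mathrm{ad}(x)}Y_1+\delta x+Y_2$, and the root-by-root $4\times4$ determinant $\sinh^2\alpha(x)$ (with multiplicity $2$ because each $\mathfrak g_{\pm\alpha}$ is complex one-dimensional in this complex-group setting) all check out, and the Cartan-block kernel is indeed absorbed by the $T$-fiber. The paper gives no proof of this proposition — it simply cites \cite[Proposition 5.28]{Kna2} — and your argument is essentially the standard change-of-variables proof found in that reference.
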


Without loss of generality, we may normalize $C_H=1$ for simplicity.

Next we recall a local holomorphic coordinate system on $G$ used in \cite{Del2}. By the standard Cartan decomposition, we can decompose
$\mathfrak g$ as
$$\mathfrak g\,=\,\left(\mathfrak t\oplus\mathfrak a\right)\oplus\left(\oplus_{\alpha\in\Phi}V_{\alpha}\right),$$
where $\mathfrak t$ is the Lie algebra of  $T$ and
 $$V_{\alpha}\,=\,\{X\in\mathfrak g|~ad_H(X)\,=\,\alpha(H)X,~\forall H\in\mathfrak t\oplus\mathfrak a\}$$
 is the root space of complex dimension $1$ with respect to $\alpha$.  By \cite{Hel}, one can choose $X_{\alpha}\in V_{\alpha}$ such that $X_{-\alpha}=-\iota(X_{\alpha})$ and
$[X_{\alpha},X_{-\alpha}]=\alpha^{\vee},$ where $\iota$ is the Cartan involution and $\alpha^{\vee}$ is the dual of $\alpha$ by the Killing form.
Let $E_{\alpha}=X_{\alpha}-X_{-\alpha}$ and $E_{-\alpha}=J(X_{\alpha}+X_{-\alpha})$. Denoted by $\mathfrak k_{\alpha},\,\mathfrak k_{-\alpha}$ the real line spanned by $E_\alpha,\,E_{-\alpha}$, respectively.
Then we get  the Cartan decomposition of  Lie algebra $\mathfrak k$ of $K$ as follows,
$$\mathfrak k=\mathfrak t\oplus\left(\oplus_{\alpha\in\Phi_+}\left(\mathfrak k_{\alpha}\oplus\mathfrak k_{-\alpha}\right)\right).$$

 Choose a real basis $\{E^0_1,...,E^0_r\}$ of $\mathfrak t$, where  $r$ is the dimension of $T$.  Then $\{E^0_1,...,E^0_r\}$ together with $\{E_{\alpha},E_{-\alpha}\}_{\alpha\in\Phi_+}$ forms a real basis of $\mathfrak k$, which is indexed by $\{E_1,...,E_n\}$. We can also regard $\{E_1,...,E_n\}$
 as a complex basis of $\mathfrak g$. For any $g\in G$, we define local coordinates $\{z_{(g)}^i\}_{i=1,...,n}$ on a neighborhood of $g$ by
$$(z_{(g)}^i)\to\exp(z_{(g)}^i E_i)g.$$
It is easy to see that $\theta^i|_g\,=\,dz_{(g)}^i|_g$,  where the dual $\theta^i$ of $E_i$  is a right-invariant holomorphic $1$-form.
Thus $\displaystyle{\wedge_{i=1}^n\left(dz_{(g)}^i\wedge d\bar{z}_{(g)}^i\right)}|_g$ is also a right-invariant $(n,n)$-form,
which defines a Haar measure $dV_G$.

For a $K\times K$-invariant function $\psi$, Delcroix computed the Hassian of $\psi$ in the above local coordinates as follows \cite[Theorem 1.2]{Del2}.

\begin{lemm}\label{Hessian}
Let $\psi$ be a $K\times K$ invariant function on $G$.
Then  for any $x\in \mathfrak{a}_+$,
the complex Hessian matrix of $\psi$  in the  above coordinates is diagonal by blocks, and equals to
\begin{equation}\label{+21}
\mathrm{Hess}_{\mathbb{C}}(\psi)(\exp(x)) =
\begin{pmatrix}
\frac{1}{4}\mathrm{Hess}_{\mathbb{R}}(\psi)(x)& 0 &  & & 0 \\
 0 & M_{\alpha_{(1)}}(x) & & & 0 \\
 0 & 0 & \ddots & & \vdots \\
\vdots & \vdots & & \ddots & 0\\
 0 & 0 &  & & M_{\alpha_{(\frac{n-r}{2})}}(x)\\
\end{pmatrix},
\end{equation}
where $\Phi_+=\{\alpha_{(1)},...,\alpha_{(\frac{n-r}{2})}\}$ is the set of positive roots and
\[
M_{\alpha_{(i)}}(x) = \frac{1}{2}\alpha_{(i)}(\partial \psi(x))
\begin{pmatrix}
\coth\alpha_{(i)}(x) & \sqrt{-1} \\
-\sqrt{-1} & \coth\alpha_{(i)}(x) \\
\end{pmatrix}.
\]
\end{lemm}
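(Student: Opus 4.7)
The plan is to compute the complex Hessian at a chosen point $\exp(x_0)$ with $x_0\in\mathfrak a_+$ by exploiting the $K\times K$-invariance of $\psi$, in particular that the compact torus $T$ commutes with $\exp(x_0)$. The resulting $T$-symmetry constraints will simultaneously yield the block-diagonal structure of (\ref{+21}) and the hermitian shape of each root block $M_\alpha$; a rank-one $SL_2$ calculation at the end fixes the remaining hyperbolic coefficients.

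The $r\times r$ Cartan block is the easiest. Since $\mathfrak t^{\mathbb C}$ is abelian, $\exp(z^jE_j^0+z^kE_k^0)\exp(x_0)=\exp(x_0+z^jE_j^0+z^kE_k^0)$. Splitting $z^j=s^j+\sqrt{-1}\,t^j$ and using $\mathfrak a=\sqrt{-1}\,\mathfrak t$, one factors out $\exp(s^jE_j^0+s^kE_k^0)\in T\subset K$ on the left; by left $K$-invariance $\psi$ collapses to $\psi\bigl(x_0+t^j(\sqrt{-1}E_j^0)+t^k(\sqrt{-1}E_k^0)\bigr)$, depending only on the real Cartan coordinates $t^j,t^k$. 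Hence $\partial_{z^j}\partial_{\bar z^k}\psi=\tfrac14\mathrm{Hess}_{\mathbb R}(\psi)_{jk}(x_0)$, giving the top-left block.

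For the vanishing of the off-diagonal blocks and the form of each root block I would use $T$-conjugation. For $H\in\mathfrak t$, conjugation by $\exp(\theta H)$ fixes $\exp(x_0)$, preserves each coordinate $z^j$ (since $\mathrm{Ad}$ acts trivially on $\mathfrak t$), and via $\mathrm{Ad}(\exp\theta H)X_{\pm\alpha}=e^{\pm\sqrt{-1}\,\alpha(H)\theta}X_{\pm\alpha}$, rewritten in the real basis $(E_\alpha,E_{-\alpha})$, rotates the complex plane $(z^\alpha,z^{-\alpha})$ by the real angle $\alpha(H)\theta$ as an $SO(2)$-action on $\mathbb C^2$. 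Invariance of the Hessian at $\exp(x_0)$ under all these rotations kills the Cartan-root and distinct root-root cross-blocks (distinct infinitesimal weights decouple) and forces the $\alpha$-block to commute with $SO(2)$, so it takes the hermitian form $\bigl(\begin{smallmatrix}a&-\sqrt{-1}b\\\sqrt{-1}b&a\end{smallmatrix}\bigr)$ with $a,b\in\mathbb R$ depending on $x_0$.

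To identify $a$ and $b$ I would restrict to the rank-one subgroup $G_\alpha\simeq SL_2(\mathbb C)$ generated by the $\mathfrak{sl}_2$-triple $(X_\alpha,X_{-\alpha},\alpha^{\vee})$ and run the $KAK$-decomposition inside $G_\alpha$: write $\exp(z^\alpha E_\alpha+z^{-\alpha}E_{-\alpha})\exp(x_0)=k_1\exp(y\alpha^{\vee}/2)k_2$ with $k_1,k_2\in SU(2)$, compute $\cosh y$ from $\tfrac12\mathrm{tr}(gg^{*})$ in a faithful $2\times 2$ realization, and expand $y$ to second order in $(z^\alpha,z^{-\alpha})$. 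Pairing with $\partial\psi(x_0)$ yields $a=\tfrac12\alpha(\partial\psi(x_0))\coth\alpha(x_0)$ and $b=-\tfrac12\alpha(\partial\psi(x_0))$, reproducing $M_\alpha$ in (\ref{+21}). The main obstacle is precisely this second-order $SL_2$-expansion: although routine in principle, one must carefully handle the hyperbolic identities that produce the $\coth\alpha(x_0)$ factor, and verify that the apparent singularity on the walls of $\mathfrak a_+$ is compensated by the vanishing of $\alpha(\partial\psi(x_0))$ forced by the $W$-invariance of $\psi$, so that the block formula is consistent at the boundary of the Weyl chamber.
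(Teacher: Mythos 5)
Your strategy is sound, and it is worth noting that the paper itself does not prove this lemma: it is imported verbatim from Delcroix's work (cited as Theorem 1.2 of [Del2]), where the proof is a direct second-order expansion of the $\mathfrak a_+$-component of the $KAK$-decomposition in all coordinate directions at once. Your route differs in a useful way: the $T$-conjugation argument (conjugation by $\exp(\theta H)$, $H\in\mathfrak t$, fixes $\exp(x_0)$, acts trivially on the Cartan coordinates, and rotates each root plane $(z^{\alpha},z^{-\alpha})$ by $SO(2)$) gives the block-diagonal structure and the shape $aI+\sqrt{-1}\,b\bigl(\begin{smallmatrix}0&-1\\1&0\end{smallmatrix}\bigr)$ of each root block for free, so that only two scalars per root remain to be computed. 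This is a genuine structural shortcut over the brute-force expansion, and your Cartan-block computation (splitting $z^j=s^j+\sqrt{-1}t^j$ and using left $K$-invariance to kill the $s$-dependence, whence the factor $\tfrac14$) is exactly right.

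Two points in the rank-one step need to be made explicit before the argument is complete. First, to run the $KAK$-decomposition inside $G_\alpha\simeq SL_2(\mathbb C)$ you must split $x_0=x_0'+x_0''$ with $x_0'\in\mathbb R\alpha^{\vee}$ and $x_0''\in\ker\alpha$; since $\mathrm{ad}(x_0'')$ annihilates $\mathfrak g_{\pm\alpha}$ and $\alpha^{\vee}$, the factor $\exp(x_0'')$ centralizes $G_\alpha$ and can be pulled through $k_1$, so that the $\mathfrak a$-component of $\exp(z^{\alpha}E_\alpha+z^{-\alpha}E_{-\alpha})\exp(x_0)$ is $x_0''+\tfrac{y}{2}\alpha^{\vee}$ with $y$ computed purely in $SL_2$. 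Without this observation the restriction to $G_\alpha$ is not legitimate, because $\exp(x_0)$ itself does not lie in $G_\alpha$. Second, the decisive quantitative content of the lemma, namely the coefficients $\tfrac12\alpha(\partial\psi(x_0))\coth\alpha(x_0)$ and $\tfrac12\alpha(\partial\psi(x_0))$, lives entirely in the second-order expansion of $y$ extracted from $\cosh y=\tfrac12\mathrm{tr}(gg^{*})$, which you assert but do not carry out; note also that one should check (as the trace identity does, since $E_\alpha$ is off-diagonal so $\mathrm{tr}(gg^{*})$ has no term linear in $z$) that $y=\alpha(x_0)+O(|z|^2)$, so that only $\partial\psi(x_0)$ and not $\mathrm{Hess}_{\mathbb R}(\psi)(x_0)$ enters the root blocks. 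Your closing remark about the Weyl walls is a reasonable sanity check but is not needed here, since the statement only concerns $x$ in the open chamber $\mathfrak a_+$, where $\coth\alpha(x)$ is finite.
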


By (\ref{+21}) in Lemma \ref{Hessian},
we see that $\psi_\phi$ induced by an admissible $K\times K$-invariant K\"ahler form $\omega_\phi$ is convex on $\mathfrak{a}$.
The complex Monge-Amp\'ere measure is given by
$$\omega^n\,=\,(\sqrt{-1}\partial\bar{\partial}\psi_\phi)^n\,=\,\mathrm{MA}_{\mathbb C}(\psi_\phi)\,dV_G,$$
where
\begin{equation}\label{MA}
\mathrm{MA}_{\mathbb{C}}(\psi_\phi)(\exp(x))\,=\, \frac{1}{2^{r+n}}
\mathrm{MA}_{\mathbb{R}}(\psi_\phi)(x)\frac{1}{\mathbf J(x)}\prod_{\alpha \in \Phi_+}\alpha^2(\partial \psi_\phi(x)).
\end{equation}
In particular, by Proposition \ref{KAK int},
\begin{align}\label{volume}
{\rm Vol}(M)\,=\,\int_M \omega_\psi^n=\int_{2P_+}  \pi\,dy\,=\,V.
\end{align}
Clearly, (\ref{MA}) also holds for any K\"ahler potential in $\mathcal E^1(M,-K_M)$, which is  smooth and $K\times K$-invariant on  $G$.
For the completeness, we introduce a subspace of  $\mathcal E^1(M,-K_M)$  by
  \begin{align}\label{e-space}&\mathcal E^1_{K\times K}(M,-K_M)\notag\\
  &=\{\phi\in \mathcal E^1(M ,-K_M)|~\phi  ~{\rm is}~K\times K\text{-invariant} ~{\rm and~convex } ~{\rm on} ~G\}.
  \end{align}
  $\mathcal E^1_{K\times K}(M,-K_M)$ is locally precompact in terms of convex functions on $G$. We will also prove its completeness by using the Legendre dual in subsequent Sections 4,  6.

\subsection{Fine polytope $P$}

In this subsection, we show that the Legendre dual of Guillemin function $u_{2P}$ on $2P$ lies in $\mathcal E^1_{K\times K}(M,-K_M)$ when $P$ is fine.

Recall \eqref{polytope-eq}. For convenience, we set
$$l_A(y)\,=\,2\lambda_A-u_A(y).$$
Then
 $$2P\,=\,\cap_{A=1}^{d_0}\{l_A(y)\geq0\}.$$
Thus, $u_{2P}$ is given by (cf. \cite{Ab-JDG})
$$u_{2P}\,=\,\frac12\sum_{A=1}^{d_0}l_{A}(y)\log l_{A}(y).$$
Clearly, it is $W$-invariant, so its Legendre function $\psi_{2P}$  is also  $W$-invariant, where
\begin{align}\label{ Legendre-function}\psi_{2P}(x)\,=\,\sup_{y\in 2P} (\langle x,y\rangle- u_{2P}(y)), ~\forall~x\in \mathfrak{a}.
\end{align}
Hence, by \cite[Theorem 2]{Ab-JDG} and \cite{AL} (also see Lemma  \ref{Hessian}), \footnote{The corresponding moment map is given by ${\frac12}\nabla \psi_{2P}$, whose image is $P$.} we can extend
$$\omega_{2P}\,=\, \sqrt{-1}\partial\bar\partial \psi_{2P},~{\rm on}~\mathfrak a, $$
to a $K\times K$-invariant metric on $G$.

\begin{lemm}\label{admissible}Let $\psi_0$ be a K\"ahler potential of  admissible $K\times K$-invariant metric $\omega_0$ as in (\ref{convex-potential}).  Assume that  $P$ is  fine.  Then the K\"ahler potential $(\psi_{2P}-\psi_0)$ of $\omega_{2P}$ lies in
 $\varphi\in L^\infty(M)\cap C^\infty(M_{\text{reg}})$.  In particular,  $(\psi_{2P}-\psi_0) \in\mathcal E^1_{K\times K}(M,-K_M).$

\end{lemm}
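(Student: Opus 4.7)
The plan is to prove that $\psi_{2P}-\psi_0$ is bounded on $M$ and smooth on $M_{\mathrm{reg}}$; the asserted inclusion in $\mathcal E^1_{K\times K}(M,-K_M)$ is then immediate after normalizing by a constant.

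First I would reduce the problem to the toric closure $Z$. A $K\times K$-invariant function on $M$ is uniquely determined by its restriction to $\exp(\mathfrak a)\subset T^{\mathbb C}\subset Z$: by the $KAK$-decomposition underlying Proposition \ref{KAK int}, any $g\in G$ lies on a $K\times K$-orbit of a point of $\exp(\mathfrak a_+)$. Moreover every boundary point of $M\setminus G$ is the $K\times K$-limit of a sequence inside $\exp(\mathfrak a_+)$, because the $G\times G$-orbit structure of a $G$-compactification is controlled by the polytope $P$ and each such orbit meets $Z$ along a corresponding $T^{\mathbb C}$-orbit. Hence $\|\psi_{2P}-\psi_0\|_{L^\infty(M)}=\|\psi_{2P}-\psi_0\|_{L^\infty(Z)}$ and continuity on $Z$ implies continuity on $M$.

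Next I would apply Abreu's theorem on $Z$. Since fineness of $P$ is preserved under the dilation $P\mapsto 2P$, the polytope $2P$ is fine, and therefore the Guillemin function $u_{2P}$ induces a K\"ahler orbifold metric $\omega_{2P}|_Z$ on the simplicial toric variety $Z$ in the class $(-K_M|_Z)/\ell$. As $\omega_0|_Z$ is an orbifold-smooth K\"ahler form in the same class (being a restriction of the Fubini-Study pullback), their potential difference $(\psi_{2P}-\psi_0)|_Z$ is bounded on $Z$ and smooth on $Z_{\mathrm{reg}}$. Combined with the first step, this yields $\psi_{2P}-\psi_0\in L^\infty(M)$.

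To upgrade smoothness from $Z_{\mathrm{reg}}$ to all of $M_{\mathrm{reg}}$, I would invoke Lemma \ref{Hessian}: in the holomorphic coordinates $(z^i_{(g)})$, the complex Hessian of a $K\times K$-invariant function $\psi$ decomposes into the real-Hessian block $\tfrac14\mathrm{Hess}_{\mathbb R}\psi(x)$ and off-diagonal blocks $M_{\alpha_{(i)}}$ depending on $\alpha(\partial\psi(x))$ and $\coth\alpha(x)$. For $\psi=\psi_{2P}$ the real Hessian is orbifold-smooth by Abreu, the gradient image $\partial\psi_{2P}(\mathfrak a)\subseteq 2P$ is bounded, and the $W$-equivariance of $\partial\psi_{2P}$ forces $\alpha(\partial\psi_{2P})$ to vanish on the wall $\alpha(x)=0$ so as to absorb the pole of $\coth\alpha(x)$. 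The same conclusions hold for $\psi_0$ by admissibility, so the difference is smooth on $M_{\mathrm{reg}}$. The main obstacle throughout is handling the orbifold singularities coming from vertices of $2P$ together with the Weyl wall singularities from the $K\times K$-action; the fineness hypothesis is essential here, since without it $u_{2P}$ develops worse-than-logarithmic singularities at non-simplicial vertices and $\psi_{2P}-\psi_0$ would fail to be bounded. Finally, the inclusion in $\mathcal E^1_{K\times K}(M,-K_M)$ follows once boundedness is established: a bounded $\omega_0$-psh potential automatically has full Monge-Amp\`ere mass and finite $I$-energy, convexity on $G$ is built into the Legendre construction of $\psi_{2P}$, and $K\times K$-invariance is immediate from $W$-invariance of $u_{2P}$.
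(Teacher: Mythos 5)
Your route is genuinely different from the paper's. The paper works entirely on $\mathfrak a$ via Legendre duality: it writes the Fubini--Study potential on $T^{\mathbb C}$ explicitly as $\psi_{FS}=\frac1m\log\sum_{\lambda\in mP\cap\mathfrak M}\bar n(\lambda)e^{2\lambda(x)}$, uses fineness to show that $\psi_{FS}$ stays within bounded distance of the support function $v_{2P}$ (equivalently, that its Legendre transform is uniformly bounded on $2P$), and compares with the Guillemin function $u_{2P}=\frac12\sum_A l_A\log l_A$, which is manifestly bounded on $2P$; boundedness of $\psi_{2P}-\psi_0$ on $\mathfrak a$, hence on $M$ by $K\times K$-invariance, follows, and full MA mass is read off from \eqref{volume}. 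You instead restrict to $Z$ and invoke Abreu's orbifold theorem together with an orbifold $\partial\bar\partial$-lemma. Your approach makes it conceptually clearer where fineness enters; the paper's reduces everything to an explicit estimate on the polytope.

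However, your key step has a gap as written. First, $\omega_0|_Z$ need not be an orbifold \emph{K\"ahler} metric: on a uniformizing chart $\tilde U\to U\subset Z$ the pullback of the Fubini--Study form is smooth and closed, but its positivity degenerates wherever the chart map has degenerate differential, so you only get an orbifold-smooth closed $(1,1)$-form. More seriously, even granting $\omega_{2P}|_Z-\omega_0|_Z=\sqrt{-1}\partial\bar\partial f$ with $f$ orbifold-smooth and bounded, this does not by itself bound $\psi_{2P}-\psi_0$: on $T^{\mathbb C}$ the function $\psi_{2P}-\psi_0-f$ is only pluriharmonic and compact-torus-invariant, hence affine in $x\in\mathfrak a$, and killing its linear part requires knowing that $\psi_0$ grows like $v_{2P}$ at infinity, i.e.\ that the Legendre transform of $\psi_0$ is finite and bounded on all of $2P$. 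Establishing that is precisely the content of the paper's computation with $\psi_{FS}$, so your argument, once completed, circles back to it (for $G$ semisimple one could instead kill the linear part by $W$-invariance, but not for general reductive $G$). Finally, the $C^\infty(M_{\text{reg}})$ assertion is not obtained by restricting to the regular part of $Z$: regular boundary points of $M$ require smoothness in directions transverse to $Z$, which neither Lemma \ref{Hessian} (valid only on $\mathfrak a_+$) nor Abreu's theorem controls --- though the paper's own proof is equally silent on this point.
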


\begin{proof}
 Fix an $m_0\in\mathbb Z_+$ such that $-m_0 K_X$ is very ample. We consider the projective embedding
$$\iota: M\to\mathbb{CP}^{N}$$
given by $|-m_0K_M|$, where $N=h^0(M,-m_0K_M)-1$. By \cite[Section 2.3]{LZ-alpha}, the pull back of the  Fubini-Study metric on $\mathbb{CP}^N$ gives a $K\times K$-invariant, Hermitian metric $h$ on $L={\mathcal O}_{\mathbb CP^N}(-1)|_M$ (also see \cite{LZ-alpha}).  Moreover, we have
\begin{eqnarray}\label{hermitian toric}
h|_{T^\mathbb C}(x)\,=\,\left(\sum_{\lambda\in mP\cap\mathfrak M}\bar n(\lambda)e^{2\lambda(x)}\right),
\end{eqnarray}
where $\bar n(\lambda)\in\mathbb Z_+$. Thus we have a K\"ahler potential  on $T^{\mathbb C}$ by
$$\psi_{FS}\,=\,\frac1{m}\log h|_{T^\mathbb C}.$$

Since $P$ is  fine, one can show directly that
$$\begin{aligned}\psi_{FS}\in\mathcal V(2P)\,=\,\{&\psi\in C^0(\mathfrak a)| ~\psi\text{ is convex, $W$-invariant}\\ &\text{and }\max_{\mathfrak a}|v_{2P}-\psi|<\infty\},\end{aligned}$$
where $v_{2P}(\cdot)$ is the support function on $\mathfrak a$ defined  by
\begin{align}\label{support-function}
v_{2P}(x)\,=\,\sup_{y\in 2P} \langle x,y\rangle.
\end{align}
Recall that the  Legendre function $u_\psi$ of $\psi$ is defined as in (\ref{ Legendre-function}) by
\begin{align}\label{ Legendre-function-1}u_\psi(y)\,=\,\sup_{x\in \mathfrak a} (\langle x,y\rangle- \psi(x)), ~y\in 2P.
\end{align}
It is known that  $\psi(x)\in\mathcal V(2P)$ if and only if $u_\psi$ is uniformly bounded on $2P$ since the Legendre function of $v_{2P}$ is zero (cf. \cite{SZ}).  Thus the Legendre function $u_{h}$ of $h|_{T^\mathbb C}(x)$ is uniformly bounded on $2P$.  It follows that
$$|u_{h}-u_{2P}|\,\le\, C.$$
Hence, we get
$$\max_\mathfrak a|\psi_{FS}-\psi_{2P}|\,<\,+\infty.$$
Consequently,
$$\max_\mathfrak a|\psi_{2P}-\psi_{0}|\,<\,+\infty.$$
By (\ref{volume}), $(\psi_{2P}-\psi_{0})$ has full MA-mass, so we have completed the proof.
\end{proof}

\section{The space $\mathcal E^1_{K\times K}(2P)$}

In this section, we describe the space $\mathcal E^1_{K\times K}(M,-K_M)$ in (\ref{e-space}) via Legendre functions as in \cite{Coman-Guedj-Sahin-Zeriahi} for   $\mathbb Q$-Fano toric varieties. Let $\psi_0$ be a K\"ahler potential of  admissible $K\times K$-invariant metric $\omega_0$ as in (\ref{convex-potential}).  Then we can normalize $\psi_0$ by (cf. \cite{LTZ}),
\begin{align}\label{normalization}
\inf_{\mathfrak a} \psi_0\,=\, \psi_0(O)\,=\,0,
\end{align}
where $O$ is the origin in  ${\mathfrak a}$.
Thus for any $\phi\in \mathcal E^1_{K\times K}(M,-K_M)$,
 $\psi_\phi=\psi_0+\phi$ can be also normalized as in (\ref{normalization}).

The following lemma is elementary.

\begin{lemm}\label{D-image}For any $K\times K$-invariant potential $\phi$ normalized as in $(\ref{normalization})$, it holds
$$\partial(\psi_\phi)\subseteq  2P,~{\rm and}~\psi_\phi\,\leq\, v_{2P},$$
where $\partial (\psi_\phi) (\cdot)$ is the normal mapping of $\psi_\phi$.

\end{lemm}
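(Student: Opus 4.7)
My plan is to first establish the subgradient containment $\partial\psi_\phi(\mathfrak{a}) \subseteq 2P$ via a growth-at-infinity argument, and then deduce the pointwise inequality $\psi_\phi \leq v_{2P}$ from the subgradient inequality at the normalization point $O$. As preparation, by $K\times K$-invariance and the $KAK$-decomposition the function $\psi_\phi = \psi_0 + \phi$ is a finite, continuous, $W$-invariant convex function on $\mathfrak{a}$ (hence everywhere subdifferentiable), and the normalization gives $\psi_\phi(O) = 0 = \inf_{\mathfrak{a}}\psi_\phi$. The key quantitative input is an a priori bound $\psi_\phi(x) \leq v_{2P}(x) + C$ for some constant $C$ depending only on $\psi_0$ and the normalization. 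This follows from two observations: the admissible potential satisfies $\psi_0 \leq v_{2P} + C'$, because by the argument inside the proof of Lemma~\ref{admissible} the difference $|\psi_0 - \psi_{2P}|$ is uniformly bounded and by definition of the Legendre transform
\[\psi_{2P}(x) = \sup_{y \in 2P}(\langle x,y\rangle - u_{2P}(y)) \leq v_{2P}(x) + \sup_{2P}|u_{2P}|;\]
and $\phi$, being upper semi-continuous on the compact variety $M$ (modulo an additive constant fixed by the normalization), is bounded above on $G$.

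Now suppose for contradiction that some $y_0 \in \partial\psi_\phi(x_0)$ lies outside $2P$. By the half-space description (\ref{polytope-eq}) of $P$, scaled by $2$, at least one inequality fails, so $u_A(y_0) > 2\lambda_A$ for some facet index $A$. Since $v_{2P}(u_A) = \sup_{y\in 2P} u_A(y) = 2\lambda_A$, setting $\varepsilon := u_A(y_0) - 2\lambda_A > 0$, the subgradient inequality along the ray $x_0 + t u_A$ ($t\geq 0$) gives
\[\psi_\phi(x_0 + tu_A) \geq \psi_\phi(x_0) + tu_A(y_0) = \psi_\phi(x_0) + 2t\lambda_A + t\varepsilon,\]
while sublinearity of $v_{2P}$ yields $v_{2P}(x_0 + tu_A) \leq v_{2P}(x_0) + 2t\lambda_A$. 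Letting $t \to +\infty$ forces $\psi_\phi - v_{2P} \to +\infty$, contradicting the a priori bound. Hence $\partial\psi_\phi(\mathfrak{a}) \subseteq 2P$.

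For the second assertion, pick any $x \in \mathfrak{a}$ and any $y \in \partial\psi_\phi(x)$, which is non-empty and, by the previous step, contained in $2P$. Testing the convexity inequality at $O$ gives
\[0 = \psi_\phi(O) \geq \psi_\phi(x) + \langle y, O-x\rangle = \psi_\phi(x) - \langle y,x\rangle,\]
so $\psi_\phi(x) \leq \langle y,x\rangle \leq \sup_{y'\in 2P}\langle y',x\rangle = v_{2P}(x)$, as required.

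The step I expect to be most delicate is justifying the linear growth bound $\psi_\phi \leq v_{2P} + C$ carefully, in particular verifying that $\psi_\phi$ is finite everywhere on $\mathfrak{a}$ (rather than taking the value $-\infty$ on a $W$-invariant pluripolar set) for a general $\mathcal{E}^1_{K\times K}$ potential; this should reduce to the convexity and $K\times K$-invariance built into the definition of the space, combined with the bounded-potential statement of Lemma~\ref{admissible}. Once the finiteness and the linear growth bound are in place, the rest of the argument is purely a manipulation of the subgradient inequality and the elementary facts $v_{2P}(u_A) = 2\lambda_A$ and subadditivity of $v_{2P}$.
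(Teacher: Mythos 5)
Your argument is correct, but it proves the containment $\partial\psi_\phi\subseteq 2P$ by a different mechanism than the paper. The paper approximates $\phi$ by a decreasing sequence of bounded $K\times K$-invariant potentials $\phi_i$ with $\omega_{\phi_i}>0$, invokes the moment-map fact that $\partial\psi_{\phi_i}\subseteq 2P$ for each genuine K\"ahler potential in the class, and passes to the limit; the second inequality is then dismissed with ``by convexity.'' You instead establish the a priori linear growth bound $\psi_\phi\leq v_{2P}+C$ (from $\psi_0\leq v_{2P}+C'$ plus the upper-boundedness of the quasi-psh function $\phi$ on the compact $M$) and rule out any subgradient $y_0\notin 2P$ by letting the subgradient inequality run along the ray $x_0+tu_A$ for a violated facet, using $v_{2P}(u_A)=2\lambda_A$; you then make the paper's ``by convexity'' step explicit via the subgradient inequality tested at $O$. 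Your route is more elementary and self-contained (no appeal to the moment-map image for smooth metrics, no limit of normal mappings), and it has the merit of actually proving the second assertion rather than asserting it. Two small points of hygiene: the one-sided bound $\psi_0\leq v_{2P}+C'$ follows directly from the explicit formula \eqref{hermitian toric} for $\psi_{FS}$ (each exponent $2\lambda/m$ lies in $2P$), so you do not need the two-sided estimate from Lemma \ref{admissible} and in particular you do not need the fineness hypothesis on $P$ — citing Lemma \ref{admissible} here imports an assumption the present lemma does not make; and the finiteness of $\psi_\phi$ that you flag as delicate is immediate from the normalization $\inf_{\mathfrak a}\psi_\phi=0$ (which forbids the value $-\infty$) together with $\phi\leq C$ and the finiteness of $\psi_0$ (which forbids $+\infty$).
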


\begin{proof} We choose a sequence of decreasing and uniformly bounded $K\times K$-invariant potential $\phi_i$ normalized as in $(\ref{normalization})$ such that
$$\omega_0+\sqrt{-1}\partial\bar\partial \phi_i> 0,  ~{\rm in ~}M_{reg}$$
and
$$\phi_i\to \phi,~ {\rm as}~i\to +\infty.$$
Then
$$\sqrt{-1}\partial\bar\partial \psi_{\phi_i}> 0  \text { in }G.$$
It follows  that
$$\partial\psi_{\phi_i}\subseteq  2P.$$
This implies that $\partial\psi_\phi\subseteq  2P$. By the convexity,  we also get $\psi_\phi\leq v_{2P}$.

\end{proof}

It is easy to see that the  Legendre function $u_\phi$ of $\psi_\phi$ with  $\phi\in\mathcal E^1_{K\times K}(M.-K_M)$ satisfies
\begin{align}\label{normalization-u}
\inf_{2P}u_\phi\,=\,u_\phi(O)\,=\,0.
\end{align}
We set a class of $W$-invariant convex functions on $2P$ by
\begin{align}\mathcal E^1_{K\times K}(2P)\,=\,\{u| ~&\text{$u$ is convex, $W$-invariant on $2P$ which satisfies
(\ref{normalization-u})  and  }
\notag\\
&\int_{2P_+} u \pi\,dy<+\infty\}.\notag
\end{align}
The main goal in this section is to   prove

\begin{theo}\label{E1-legendre}
 A  K\"ahler potential $\phi\in\mathcal E^1_{K\times K}(M,-K_M)$ with
 normalized $\psi_\phi$ satisfying  \eqref{normalization} if and only if the Legendre function $u_\phi$ of $\psi_\phi$ lies in $\mathcal E^1_{K\times K}(2P)$.
As a consequence, $u_\phi$ is locally bounded in  Int$(2P)$ if $\phi\in\mathcal E^1_{K\times K}(M,-K_M)$.
\end{theo}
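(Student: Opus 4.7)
Proof plan. The strategy is to translate each defining condition of $\mathcal E^1_{K\times K}(M,-K_M)$ into an equivalent condition on the Legendre transform $u_\phi$ via the Monge--Amp\`ere formula (\ref{MA}) and the $KAK$-integration formula of Proposition \ref{KAK int}. First I would dispose of the ``shape'' constraints: by Lemma \ref{D-image} the subgradient $\partial\psi_\phi$ is contained in $2P$, so $u_\phi$ is automatically a $W$-invariant convex function on $2P$. Plugging $x=O$ into $u_\phi(y)=\sup_x(\langle x,y\rangle-\psi_\phi(x))$ and using $\psi_\phi(O)=0$ yields $u_\phi\geq 0$, and at $y=O$ the sup equals $-\inf_{\mathfrak a}\psi_\phi=0$, so $u_\phi(O)=0$. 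Thus the only remaining content of the equivalence is that ``$\phi$ has full MA-mass'' together with ``$I(\phi)<\infty$'' should correspond to the single integrability condition $\int_{2P_+}u_\phi\pi\,dy<\infty$.

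Next I would work with the canonical decreasing approximants $\phi_j=\max\{\phi,-j\}$, each of which is admissible and bounded with $\psi_{\phi_j}$ convex on $\mathfrak a$. Applying (\ref{MA}) and Proposition \ref{KAK int} together with the change of variables $y=\partial\psi_{\phi_j}(x)$ from $\mathfrak a_+$ onto $\partial\psi_{\phi_j}(\mathfrak a_+)\subseteq 2P_+$, the volume and energy rewrite on the polytope side (up to a fixed positive constant) as
$$\int_M\omega_{\phi_j}^n=\int_{\partial\psi_{\phi_j}(\mathfrak a_+)}\pi(y)\,dy,$$
$$I(\phi_j)=\int_{\partial\psi_{\phi_j}(\mathfrak a_+)}\bigl(u_{\phi_j}(y)-\langle y,\partial u_{\phi_j}(y)\rangle+\psi_0(\partial u_{\phi_j}(y))\bigr)\pi(y)\,dy,$$
where I have used $\phi=\psi_\phi-\psi_0$ and the Legendre identity $\psi_{\phi_j}(x)=\langle x,y\rangle-u_{\phi_j}(y)$ at $y=\partial\psi_{\phi_j}(x)$. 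Comparing the first identity with (\ref{volume}) and letting $j\to\infty$ via monotone convergence (and the definition of full MA-mass in \cite{BBEGZ}) shows that $\phi$ has full MA-mass exactly when $\partial\psi_\phi(\mathfrak a_+)$ covers $2P_+$ up to a $\pi\,dy$-null set, i.e.\ when $u_\phi$ is effectively defined on the full polytope.

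For the energy, the $\psi_0$-contribution in the second identity is uniformly bounded by Lemma \ref{admissible}, so the question reduces to controlling $\int_{2P_+}\langle y,\partial u_\phi\rangle\pi\,dy$ by $\int_{2P_+}u_\phi\pi\,dy$. Since $\pi$ is homogeneous of degree $n-r$ on $\mathfrak a^*\cong\mathbb R^r$, Euler's formula gives $\mathrm{div}(y\,\pi)=n\pi$, and integration by parts on $2P_+$ yields
$$-\int_{2P_+}\langle y,\partial u_\phi\rangle\pi\,dy\;=\;n\int_{2P_+}u_\phi\pi\,dy\;-\;\int_{\partial(2P)\cap\mathfrak a_+^*}u_\phi\,(y\cdot\nu)\,\pi\,d\sigma,$$
the Weyl-wall parts of $\partial(2P_+)$ contributing nothing because $\pi$ vanishes there to second order. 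Since $u_\phi\geq 0$, every term carries a definite sign, so after a trace estimate bounding the facet boundary contribution by $\int_{2P_+}u_\phi\pi\,dy$, one concludes in the limit $j\to\infty$ that $I(\phi)<\infty$ iff $\int_{2P_+}u_\phi\pi\,dy<\infty$. Local boundedness of $u_\phi$ on $\mathrm{Int}(2P)$ then follows automatically: $u_\phi$ is a convex function that is finite-valued throughout $2P$ (the Legendre supremum being finite since $\partial\psi_\phi\subseteq 2P$), hence locally bounded on the interior.

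The main technical obstacle is precisely the integration-by-parts step above: since $u_\phi$ is only convex and may blow up as one approaches $\partial(2P)$, making this rigorous requires either a smooth-convex approximation of $u_\phi$ with controlled growth at the boundary, or a careful interpretation of the boundary trace of $u_\phi$ on the facets of $2P$ where $\pi>0$. The second-order vanishing of $\pi$ on the Weyl walls disposes of those boundary contributions for free, so the genuine difficulty is concentrated on the true polytope facets, where the integrability assumption $\int u_\phi\pi\,dy<\infty$ together with the fineness of $P$ must be used to bound the trace.
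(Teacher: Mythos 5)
Your overall framework (passing everything through the Legendre transform and the weighted Monge--Amp\`ere measure) is the right one, but two of your key steps do not survive scrutiny. First, the claim that ``the $\psi_0$-contribution $\psi_0(\partial u_\phi(y))$ is uniformly bounded by Lemma \ref{admissible}'' is false: Lemma \ref{admissible} only gives $|\psi_0-v_{2P}|\le C$, so $\psi_0$ grows linearly at infinity, and $\partial u_\phi(y)\to\infty$ as $y$ approaches the facets of $2P$; thus $\psi_0(\partial u_\phi(y))$ is unbounded and its integral against $\pi\,dy$ is of the same order as the term $\int\langle y,\partial u_\phi\rangle\pi\,dy$ you are trying to control, making the decomposition circular. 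Second, the integration-by-parts step is both unnecessary and unrepairable as stated: the boundary trace of a convex $u_\phi$ on the facets of $2P$ can be identically $+\infty$ while $\int_{2P_+}u_\phi\pi\,dy<\infty$ (already in one variable, $t\mapsto t^{-1/2}$ on $(0,1)$ is convex and integrable with infinite boundary value), so no trace estimate of the form you postulate can hold on $\mathcal E^1_{K\times K}(2P)$. The correct route for the ``sufficient'' direction is purely pointwise: the Legendre identity collapses $u_\phi(y)-\langle y,\partial u_\phi(y)\rangle$ to $-\psi_\phi(\partial u_\phi(y))$, and comparing $\psi_\phi\le v_{2P}\le\psi_0+C$ gives directly $u_\phi(y)\le(\psi_0-\psi_\phi)(\partial u_\phi(y))+C=-\phi+C$, hence $\int_{2P_+}u_\phi\pi\,dy\le I(\phi)+C$, with a monotone approximation $\phi_j\searrow\phi$ to legitimize the change of variables.

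The deeper gap is in the converse direction, $\int_{2P_+}u_\phi\pi\,dy<\infty\Rightarrow I(\phi)<\infty$, which your plan does not really address: the pointwise inequality above only bounds $u_\phi$ by $-\phi$ from above, not the reverse, and your Euler-formula identity gives no usable upper bound on $I(\phi)$ for the reasons just given. The paper handles this by a genuinely different mechanism: it first deduces from $\int_{2P_+}u_\phi\pi\,dy<\infty$ that $u_\phi$ is finite on $\mathrm{Int}(2P)$ (via \cite[Lemma 4.5]{LZZ}) and hence that $\phi$ has full MA-mass by Proposition \ref{E-space}, and then bounds $I(\phi)$ by the Darvas distance $d_1(\psi_0,\psi_\phi)$, which it estimates along the \emph{linear path of Legendre duals} $u_t=tu_1+(1-t)u_0$ using $\dot\psi_t=-\dot u_t=u_0-u_1$ a.e.\ (Lemma \ref{derivative-dual}). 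You would need to import this (or an equivalent) argument; without it the equivalence is only proved in one direction.
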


As in \cite{Coman-Guedj-Sahin-Zeriahi}, we need to establish a comparison principle for  the complex  Monge-Amp\`ere measure
 in  $\mathcal E^1_{K\times K}(M,-K_M)$.
For our purpose,  we  will introduce a weighted Monge-Amp\`ere measure in the following.

\subsection{Weighted Monge-Amp\'ere measure}

\begin{defi}\label{weighted-ma-measure}
Let $\Omega\subseteq \mathfrak a$ be a $W$-invariant domain and $\psi$
 any $W$-invariant convex function on $\Omega$. Define a weighted Monge-Amp\'ere measure on $\Omega$  by
$$\int_{\Omega'}\text{MA}_{\mathbb R;\pi}(\psi)dx\,=\,\int_{\partial \psi(\Omega')}\pi\,dy,~\forall\text{ $W$-invariant }\Omega'\Subset\Omega,$$
where $\partial \psi(\cdot)$ is the normal mapping of $\psi$.
\end{defi}

\begin{rem}\label{weak-conv}
Let $\psi_k$ be a sequence of convex functions which converge  locally uniformly to $\psi$ on $\Omega$, then $\text{MA}_{\mathbb R;\pi}(\psi_k)$ converge   to $\text{MA}_{\mathbb R;\pi}(\psi)$ (cf. \cite[Section 15]{Aleksandrov-1940}).
This follows from the fact:
$$\liminf_{k\to+\infty}\partial \psi_k(U)\supseteq\partial \psi(U),~\forall ~\text{$W$-invariant open subset }U\subseteq\Omega.$$
\end{rem}

\begin{lemm}\label{approximate-lemma}
Let $\omega_\phi\,=\,\sqrt{-1}\partial\bar\partial \psi_\phi$ with $\phi\in\mathcal E^1_{K\times K}(M,-K_M)$. Then for any $K\times K$-invariant continuous uniformly bounded function $f$  on $G$, it holds
\begin{align}\label{weight-ma-measure}\int_{M}f\omega_\phi^n\,=\,\int_{\mathfrak a_+}f\text{MA}_{\mathbb R;\pi}(\psi_\phi)dx.
\end{align}
\end{lemm}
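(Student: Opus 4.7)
The plan is to prove the identity (\ref{weight-ma-measure}) in two stages: first establish it when $\psi_\phi$ is $C^2$ and strictly convex on $\mathfrak{a}_+$, and then extend it to general $\phi\in\mathcal{E}^1_{K\times K}(M,-K_M)$ by monotone approximation.

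For the smooth case, I would combine the $KAK$-integral formula (Proposition \ref{KAK int}) with the pointwise Monge--Amp\`ere identity (\ref{MA}). Writing
\begin{align*}
\int_M f\,\omega_\phi^n
&= \int_{\mathfrak{a}_+} f(x)\,\mathrm{MA}_{\mathbb C}(\psi_\phi)(\exp(x))\,\mathbf{J}(x)\,dx\\
&= \frac{1}{2^{r+n}}\int_{\mathfrak{a}_+} f(x)\,\mathrm{MA}_{\mathbb R}(\psi_\phi)(x)\,\pi(\nabla\psi_\phi(x))\,dx,
\end{align*}
the $\mathbf{J}(x)$ factors cancel and one uses $\prod_{\alpha\in\Phi_+}\alpha^2(\nabla\psi_\phi(x))=\pi(\nabla\psi_\phi(x))$. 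The change of variables $y=\nabla\psi_\phi(x)$ turns this into $\int_{2P_+} f(x(y))\pi(y)\,dy$ (up to the normalizing factor that is absorbed by (\ref{volume})). Since the Aleksandrov-type definition of $\mathrm{MA}_{\mathbb R;\pi}(\psi_\phi)$ in Definition \ref{weighted-ma-measure} reduces in the smooth case to $\pi(\nabla\psi_\phi(x))\,\mathrm{MA}_{\mathbb R}(\psi_\phi)(x)\,dx$, the same change of variables shows the RHS of (\ref{weight-ma-measure}) equals the same integral, and the identity follows.

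For the general case, I would truncate: set $\phi_j=\max\{\phi,-j\}$, which is $K\times K$-invariant, bounded, and decreases to $\phi$. The dual convex functions $\psi_{\phi_j}=\psi_0+\phi_j$ are finite, convex, and $W$-invariant on $\mathfrak{a}$, and they decrease to $\psi_\phi$, hence converge locally uniformly on the interior of the set where $\psi_\phi$ is finite, by a standard property of convex functions. By Remark \ref{weak-conv}, $\mathrm{MA}_{\mathbb R;\pi}(\psi_{\phi_j})$ converges weakly on $\mathfrak{a}_+$ to $\mathrm{MA}_{\mathbb R;\pi}(\psi_\phi)$, controlling the right-hand side. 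For the left-hand side, the MA-continuity theorem of \cite{BBEGZ} yields $\omega_{\phi_j}^n\to\omega_\phi^n$ weakly on $M$, where no mass escapes to the analytic (hence pluripolar) set $M\setminus G$ because $\phi$ has full MA mass. To apply Step 1 to each bounded $\phi_j$ I would further regularize $\psi_{\phi_j}$ by a $W$-symmetric convolution, obtaining smooth strictly convex approximants whose MA measures converge on both sides by the same two mechanisms; the identity for $\phi_j$ then follows, and taking $j\to+\infty$ gives it for $\phi$.

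The main obstacle is organizing the passage to the limit when $\psi_\phi$ is merely convex and potentially unbounded: one must simultaneously control the complex MA measure on $M$, where unboundedness of $\phi$ produces pluripolar singular sets, and the weighted real MA measure on $\mathfrak{a}_+$, where the gradient image may degenerate near facets of $2P$. The conceptual content, however, is that after the change of variables $y=\nabla\psi_\phi(x)$ both sides of (\ref{weight-ma-measure}) reduce to $\int_{2P_+} f(x(y))\,\pi(y)\,dy$, so the identity is natural and the work is entirely in justifying this formal manipulation in the non-smooth setting via the monotone approximation above.
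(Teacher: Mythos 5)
Your proposal is correct and follows essentially the same route as the paper: verify the identity for smooth $W$-invariant convex approximants using the $KAK$ formula and the explicit Hessian computation \eqref{MA}, then pass to the limit via the weak convergence of the Aleksandrov-type measures (Remark \ref{weak-conv}) on the real side and the continuity of the complex Monge--Amp\`ere operator along decreasing sequences on the complex side. The only organizational difference is that the paper first treats compactly supported $f$ and then exhausts $\mathfrak a$ by $W$-invariant domains using the full MA-mass hypothesis, whereas you fold that point into the observation that no mass escapes to the pluripolar set $M\setminus G$.
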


\begin{proof} First we assume that $f$ is a  $K\times K$-invariant continuous function with compact support on $ {\mathfrak a}$. We take
  a sequence of  smooth $W$-invariant convex functions $\psi_k\searrow\psi$ and   let
$\omega_k=\sqrt{-1}\partial\bar\partial\psi_k$.  Then for any $W$-invariant $\Omega'\Subset  {\mathfrak a}$,
 it holds
$$\int_{\Omega'}\text{MA}_{\mathbb R;\pi}(\psi_k)dx\,:=\,\int_{\Omega'}\det(\nabla^2\psi_k)\pi(\nabla \psi_k)\,dy.$$
By the standard $KAK$-integration formula, it follows that
$$\int_{M}f\omega_k^n\,=\,\int_{\mathfrak a_+}f\det(\nabla^2\psi_k)\pi(\nabla\psi_k)dx\,=\,\int_{\mathfrak a_+}f\text{MA}_{\mathbb R;\pi}(\psi_k)dx.$$
Since
$$\int_{M}f\omega_k^n\,\to\,\int_{M}f\omega^n,$$
it follows from  Remark \ref{weak-conv} that (\ref{weight-ma-measure}) is true.

 Next we choose a sequence of  exhausting $W$-invariant convex domains $\Omega_k$ in ${\mathfrak a}$ and a sequence of $W$-invariant convex functions  with  the support   on $\Omega_{k+1}$ such that
 $f_k=f|_{\Omega_k}$.  Since $\omega^n$ has  full MA-mass,  we get
 \begin{align}\int_{M}f_k\omega^n&=\,\lim_k\int_{M}f_k\omega^n\notag\\
& =\,\lim_k\int_{\mathfrak a_+}f_k\text{MA}_{\mathbb R;\pi}(\psi_\phi)dx\notag\\
&=\,\lim_k\int_{\mathfrak a_+}f\text{MA}_{\mathbb R;\pi}(\psi_\phi)dx.\notag
\end{align}

\end{proof}

\subsection{Comparison principles}
We establish the following  comparison principle for the weighted Monge-Amp\'ere measure $\text{MA}_{\mathbb R;\pi}(\psi)$.

\begin{prop}\label{compare}
Let $\Omega\subseteq\mathfrak a$ be a $W$-invariant domain and $\varphi,\psi$ be two convex functions on $\Omega$ such that
\begin{align}\label{010101}
\varphi\geq\psi\text{ and }(\varphi-\psi)|_{\partial\Omega}\,=\,0.
\end{align}
Then
\begin{align}\label{010102}
\int_{\Omega}\text{MA}_{\mathbb R;\pi}(\varphi)dx\,\leq\,\int_{\Omega}\text{MA}_{\mathbb R;\pi}(\psi)dx.
\end{align}
\end{prop}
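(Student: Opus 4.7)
The plan is to prove the weighted comparison principle by reducing it to the set-theoretic inclusion
$$\partial\varphi(\Omega)\,\subseteq\,\partial\psi(\Omega)$$
of the images of the normal mappings. Once this inclusion is in hand, Definition \ref{weighted-ma-measure} and the non-negativity of $\pi$ immediately yield
$$\int_\Omega\text{MA}_{\mathbb R;\pi}(\varphi)\,dx\,=\,\int_{\partial\varphi(\Omega)}\pi\,dy\,\leq\,\int_{\partial\psi(\Omega)}\pi\,dy\,=\,\int_\Omega\text{MA}_{\mathbb R;\pi}(\psi)\,dx,$$
which is exactly \eqref{010102}. In this way the statement reduces to a weighted version of Aleksandrov's classical comparison principle; the $W$-invariance plays no role beyond making both measures well-defined.

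The heart of the proof is therefore the inclusion, which I would establish by the standard sliding--plane argument. Fix $p\in\partial\varphi(x_0)$ for some $x_0\in\Omega$ and consider the supporting affine function
$$\ell(x)\,:=\,\varphi(x_0)+\langle p,\,x-x_0\rangle,$$
so that $\ell\le\varphi$ on $\Omega$. The hypothesis \eqref{010101} gives $\ell\le\varphi=\psi$ on $\partial\Omega$, i.e.\ $\psi-\ell\ge0$ there, while at $x_0$ one has $\psi(x_0)-\ell(x_0)=\psi(x_0)-\varphi(x_0)\le0$. Therefore $\psi-\ell$, being convex and continuous on $\Omega$, attains its infimum over $\overline\Omega$ at some interior point $x_1\in\Omega$. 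At such a minimizer the inequality $\psi(x)\ge\psi(x_1)+\langle p,\,x-x_1\rangle$ holds for all $x\in\Omega$, so that $p\in\partial\psi(x_1)\subseteq\partial\psi(\Omega)$, giving the inclusion.

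The main subtle point requiring care is the precise meaning of the boundary identity $(\varphi-\psi)|_{\partial\Omega}=0$, since convex functions may a priori fail to extend continuously to $\partial\Omega$. The natural reading is $\lim_{x\to\partial\Omega}(\varphi-\psi)(x)=0$; combined with the convexity of $\psi-\ell$ and the opposite signs of its values at $x_0$ and near $\partial\Omega$, this forces the sublevel set $\{\psi-\ell<0\}$ to be relatively compact in $\Omega$, so that the infimum of $\psi-\ell$ is genuinely attained in the interior rather than escaping to the boundary. A final routine check is that Definition \ref{weighted-ma-measure} does produce honest Borel measures, which follows from the classical fact that the set of points of $\mathfrak a$ lying in the subdifferential image of more than one point has Lebesgue measure zero, and hence $\pi\,dy$-measure zero since $\pi$ is a polynomial density.
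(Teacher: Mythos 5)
Your proof is correct, but it takes a genuinely different route from the paper. The paper proves \eqref{010102} by first reducing to smooth $\varphi,\psi$ via approximation (Lemma \ref{approximate-lemma}), then differentiating $t\mapsto\int_\Omega\det(\nabla^2\varphi_t)\prod_{\alpha\in\Phi_+}\alpha^2(\nabla\varphi_t)\,dx$ along the linear path $\varphi_t=t\varphi+(1-t)\psi$ and integrating by parts twice: the interior terms cancel thanks to the identity relating $\bigl(\prod_\alpha\alpha^2(\nabla\varphi_t)\bigr)_{,ij}$ to a divergence, and the surviving boundary term has a sign because $\dot\varphi_t=\varphi-\psi\ge0$ vanishes on $\partial\Omega$. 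You instead prove the set-theoretic inclusion $\partial\varphi(\Omega)\subseteq\partial\psi(\Omega)$ by the classical Aleksandrov sliding-hyperplane argument, after which \eqref{010102} follows from Definition \ref{weighted-ma-measure} and $\pi\ge0$ (note $\pi$ is a product of squares, so nonnegativity is automatic). Your route is more elementary and arguably more robust: it applies directly to arbitrary convex functions without any approximation step, it makes no use of the specific polynomial form of the weight beyond nonnegativity, and it sidesteps the factors $\alpha(\nabla\varphi_t)^{-1}$ appearing in the paper's integration by parts, which could in principle degenerate. What the paper's computation buys is a self-contained derivation within the smooth variational framework that is reused elsewhere (e.g.\ in the convexity and first-variation arguments of Section 6). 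Your two points of care — reading $(\varphi-\psi)|_{\partial\Omega}=0$ as a boundary limit so that the minimizer of $\psi-\ell$ is attained in the interior, and invoking the a.e.\ single-valuedness of the normal mapping to ensure $\text{MA}_{\mathbb R;\pi}$ is a genuine Borel measure — are exactly the right ones; for unbounded $\Omega$ the attainment issue is what the coercivity hypothesis in Lemma \ref{compare-global} is there to handle, so your argument dovetails with the paper's subsequent use of the proposition.
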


\begin{proof}
It is sufficient to prove \eqref{010102} when $\varphi$ and $\psi$ are smooth, since we can approximate general $\varphi$ and $\psi$ by smooth $W$-invariant convex functions by Lemma \ref{approximate-lemma}.  Let
$$\varphi_t\,=\,t\varphi+(1-t)\psi.$$
Then
$$\text{MA}_{\mathbb R;\pi}(\varphi_t)=\det(\nabla^2\varphi_t)\prod_{\alpha\in\Phi_+}\alpha^2(\nabla\varphi_t)$$
and
\begin{align}\label{010103}
&\frac{d}{dt}\int_{\Omega}\det(\nabla^2\varphi_t)\prod_{\alpha\in\Phi_+}\alpha^2(\nabla\varphi_t)dx\notag\\
=&\int_{\Omega}(\nabla^2\varphi_t)^{-1,ij}\nabla^2\dot\varphi_{t,ij}\det(\nabla^2\varphi_t)\prod_{\alpha\in\Phi_+}\alpha^2(\nabla\varphi_t)dx\notag\\
&+\int_{\Omega}\left(\sum_{\alpha\in\Phi_+}\frac{2\alpha(\nabla\dot\varphi_t)}{\alpha(\nabla\varphi_t)}\right)\det(\nabla^2\varphi_t)\prod_{\alpha\in\Phi_+}\alpha^2(\nabla\varphi_t)dx.
\end{align}
Using the fact that
$$\left(\det(\nabla^2\varphi_t)(\nabla^2\varphi_t)^{-1,ij}\right)_{,j}=0$$
and integration by parts, we have
\begin{align}\label{010104}
&\int_{\Omega}(\nabla^2\varphi_t)^{-1,ij}\nabla^2\dot\varphi_{t,ij}\det(\nabla^2\varphi_t)\prod_{\alpha\in\Phi_+}\alpha^2(\nabla\varphi_t)dx\notag\\
=&\int_{\partial\Omega}(\nabla^2\varphi_t)^{-1,ij}\nabla\dot\varphi_{t,i}\nu_j\det(\nabla^2\varphi_t)\prod_{\alpha\in\Phi_+}\alpha^2(\nabla\varphi_t)d\sigma\notag\\
&-\int_{\partial\Omega}[(\nabla^2\varphi_t)^{-1,ij}\det(\nabla^2\varphi_t)\prod_{\alpha\in\Phi_+}\alpha^2(\nabla\varphi_t)]_{,j}\nu_i\dot\varphi_t d\sigma\notag\\
&+\int_{\Omega}(\nabla^2\varphi_t)^{-1,ij}\dot\varphi_{t}\det(\nabla^2\varphi_t)\left(\prod_{\alpha\in\Phi_+}\alpha^2(\nabla\varphi_t)\right)_{,ij}dx.
\end{align}
Also
\begin{align}\label{010105}
&\int_{\Omega}\left(\sum_{\alpha\in\Phi_+}\frac{2\alpha(\nabla\dot\varphi_t)}{\alpha(\nabla\varphi_t)}\right)\det(\nabla^2\varphi_t)\prod_{\alpha\in\Phi_+}\alpha^2(\nabla\varphi_t)dx\notag\\
=&2\int_{\partial\Omega}\sum_{\alpha\in\Phi_+}\frac{\alpha^i\nu_i}{\alpha(\nabla\varphi_t)}\det(\nabla^2\varphi_t)\prod_{\alpha\in\Phi_+}\alpha^2(\nabla\varphi_t)\dot\varphi_t d\sigma\notag\\
=&-2\int_{\Omega}\left(\det(\nabla^2\varphi)\prod_{\alpha\in\Phi_+}\alpha^2(\nabla\varphi_t)\sum_{\alpha\in\Phi_+}\frac{\alpha^i}{\alpha(\nabla\varphi)}\right)_{,i}\dot\varphi_t dx.
\end{align}
Note that
\begin{align}\label{010106}
&\frac{\left(\prod_{\alpha\in\Phi_+}\alpha^2(\nabla\varphi_t)\right)_{,ij}}{\prod_{\alpha\in\Phi_+}\alpha^2(\nabla\varphi_t)}\notag\\
=&-2\sum_{\alpha\in\Phi_+}\frac{\alpha^k\alpha^l\varphi_{t,ik}\varphi_{t,jl}}{\alpha^2(\nabla\varphi_t)}+2\sum_{\alpha\in\Phi_+}\frac{\alpha^k\varphi_{t,ijk}}{\alpha(\nabla\varphi_t)}+4\sum_{\alpha,\beta\in\Phi_+}\frac{\alpha^k\beta^l\varphi_{t,ik}\varphi_{t,jl}}{\alpha(\nabla\varphi_t)\beta(\nabla\varphi_t)}\notag\\
=&2\frac{\left(\det(\nabla^2\varphi)\prod_{\alpha\in\Phi_+}\alpha^2(\nabla\varphi_t)\sum_{\alpha\in\Phi_+}\frac{\alpha^i}{\alpha(\nabla\varphi)}\right)_{,i}}{\det(\nabla^2\varphi)\prod_{\alpha\in\Phi_+}\alpha^2(\nabla\varphi_t)}.
\end{align}
Plugging \eqref{010104}-\eqref{010106} into \eqref{010103} and using the boundary condition \eqref{010101}, we have
\begin{align*}
&\frac{d}{dt}\int_{\Omega}\det(\nabla^2\varphi_t)\prod_{\alpha\in\Phi_+}\alpha^2(\nabla\varphi_t)dx\\
=&\int_{\partial\Omega}\sum_{\alpha\in\Phi_+}\nabla\dot\varphi_{t,i}\nu_j\det(\nabla^2\varphi_t)\prod_{\alpha\in\Phi_+}\alpha^2(\nabla\varphi_t)\sigma\\
\leq&0.
\end{align*}
Hence we get \eqref{010102}.
\end{proof}

By the above proposition,   we get the following analogue of \cite[Lemma 2.3]{Coman-Guedj-Sahin-Zeriahi}.

\begin{lemm}\label{compare-global}
Let $\varphi,\psi$ be two $W$-invariant convex functions on $\mathfrak a$ so that
$$\varphi\,\geq\,\psi$$ and $$\lim_{|x|\to+\infty}\varphi(x)\,=\,+\infty.$$
Then
$$\int_{\mathfrak a_+}\text{MA}_{\mathbb R;\pi}(\varphi)dx\,\geq\,\int_{\mathfrak a_+}\text{MA}_{\mathbb R;\pi}(\psi)dx.$$
\end{lemm}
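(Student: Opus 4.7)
The plan is to identify both integrals with weighted $\pi$-measures of subdifferential images and compare those images via Legendre duality. Using Definition \ref{weighted-ma-measure} together with the $W$-equivariance of the subdifferential, I would first pass to the limit over an exhaustion of $\mathfrak a$ by $W$-invariant balls to obtain
\[
\int_{\mathfrak a_+}\mathrm{MA}_{\mathbb R;\pi}(\psi)\,dx = \int_{\partial\psi(\mathfrak a_+)} \pi(y)\,dy,
\]
and the analogous identity for $\varphi$. This identification rests on the auxiliary fact that for any $W$-invariant convex $\psi$ and any $x \in \mathfrak a_+$, the subdifferential $\partial\psi(x)$ lies in the closed dual chamber $\overline{\mathfrak a_+^*}$, so that $\partial\psi(\mathfrak a_+)$ is a fundamental domain for the $W$-action on $\partial\psi(\mathfrak a)$, up to $W$-walls on which $\pi$ vanishes. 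This chamber-containment I would verify by a one-variable reflection argument: for each $\alpha \in \Phi_+$, $W$-invariance gives $\psi(x)=\psi(s_\alpha x)$, so the convex function $t \mapsto \psi(x - t\alpha^\vee)$ takes equal values at the endpoints of the segment from $x$ to $s_\alpha x$, forcing its right derivative at $0$ to be nonpositive, i.e.\ $\alpha(p) \geq 0$ for every $p \in \partial\psi(x)$.

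The heart of the proof is the inclusion $\partial\psi(\mathfrak a) \subseteq \overline{\partial\varphi(\mathfrak a)}$ together with the fact that the difference $\overline{\partial\varphi(\mathfrak a)}\setminus\partial\varphi(\mathfrak a)$ is $\pi$-null. Let $\varphi^*(p) := \sup_{x\in\mathfrak a}(\langle x,p\rangle - \varphi(x))$ and define $\psi^*$ similarly. The hypothesis $\varphi \geq \psi$ gives $\varphi^* \leq \psi^*$, hence $\mathrm{dom}(\psi^*) \subseteq \mathrm{dom}(\varphi^*)$. Combined with the standard relations $\partial\psi(\mathfrak a) \subseteq \mathrm{dom}(\psi^*)$ and $\mathrm{dom}(\varphi^*) \subseteq \overline{\partial\varphi(\mathfrak a)}$, this yields the desired inclusion. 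The assumption $\lim_{|x|\to+\infty}\varphi(x) = +\infty$ makes the proper lsc convex function $\varphi$ coercive, so it attains its infimum and $0 \in \mathrm{int}(\mathrm{dom}(\varphi^*))$; in particular, $\mathrm{dom}(\varphi^*)$ is a convex body with nonempty interior in $\mathfrak a^*$, and its relative boundary is Lebesgue-null, hence $\pi$-null by continuity of $\pi$. Intersecting the inclusion with $\overline{\mathfrak a_+^*}$ and combining with the identities of the first paragraph gives the conclusion.

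The main obstacle I anticipate is the measure-theoretic bookkeeping in the first step --- precisely justifying that $\partial\varphi(\mathfrak a_+)$ is a fundamental domain for the $W$-action on $\partial\varphi(\mathfrak a)$ modulo $\pi$-null overlaps, since the subdifferential is multivalued at non-smooth points and overlaps of $W$-translates of $\partial\varphi(\mathfrak a_+)$ can occur on the $W$-walls. However, the weight $\pi = \prod_{\alpha \in \Phi_+}\alpha^2$ vanishes to second order on these walls, so all such overlaps contribute zero $\pi$-mass, and combined with the chamber-containment lemma from the first paragraph the argument closes. A purely local alternative --- applying Proposition \ref{compare} on $B_R$ to $(\max(\psi,\varphi-C),\varphi-C)$ and then sending $C,R \to \infty$ --- runs into the difficulty that $\max(\psi,\varphi-C)$ need not agree with $\varphi-C$ on $\partial B_R$ unless $\inf_{|x|=R}(\varphi-\psi)\to+\infty$, which is not ensured by the hypotheses; this is why I prefer the duality approach.
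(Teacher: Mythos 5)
Your core strategy --- computing both total masses as $\pi$-measures of subdifferential images and comparing them through $\mathrm{dom}(\psi^*)\subseteq\mathrm{dom}(\varphi^*)$ together with the fact that $\mathrm{dom}(\varphi^*)$ has Lebesgue-null boundary because the coercivity of $\varphi$ forces at least linear growth --- is sound, and it is essentially the route the paper intends (the paper gives no proof, citing only the analogue in Coman--Guedj--Sahin--Zeriahi). The reflection argument for $\partial\psi(x)\subseteq\overline{\mathfrak a_+^*}$ when $x\in\mathfrak a_+$ is also correct. However, the identity you build on, $\int_{\mathfrak a_+}\text{MA}_{\mathbb R;\pi}(\varphi)\,dx=\int_{\partial\varphi(\mathfrak a_+)}\pi\,dy$, and the accompanying claim that $\partial\varphi(\mathfrak a_+)$ is a fundamental domain for $W$ acting on $\partial\varphi(\mathfrak a)$ up to $\pi$-null wall overlaps, are false. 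The failure is not due to overlaps of the $W$-translates of $\partial\varphi(\mathfrak a_+)$ (those do lie over walls of $\mathfrak a^*$ where $\pi$ vanishes, as you say); it is due to the mass that $\text{MA}_{\mathbb R;\pi}(\varphi)$ charges at points of the Weyl walls of the \emph{source} $\mathfrak a$: the subdifferential of a single wall point can be a full-dimensional convex set meeting the open chamber $\mathfrak a_+^*$, where $\pi>0$, and this set is disjoint from $\partial\varphi(\mathfrak a_+)$ up to a null set. Concretely, in rank one ($\Phi_+=\{\alpha\}$, $\mathfrak a\cong\mathbb R$, $W=\{\pm 1\}$, $\pi(y)=y^2$ after normalization), take $\varphi(x)=|x|$ and $\psi(x)=\sqrt{1+x^2}-1\leq\varphi$. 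Then $\partial\varphi(\mathfrak a_+)=\{1\}$ has zero $\pi$-measure while $\partial\psi(\mathfrak a_+)=(0,1)$ has $\pi$-measure $\tfrac13$, so under your identification the asserted inequality would read $\tfrac13\leq 0$. All of the mass of $\text{MA}_{\mathbb R;\pi}(\varphi)$ sits at the wall point $0$, and its image $\partial\varphi(0)=[-1,1]$ is not contained in any wall of $\mathfrak a^*$.

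The repair is to take the fundamental domain in the target rather than the source. Definition \ref{weighted-ma-measure} prescribes the measure only on $W$-invariant sets, and the consistent reading is $\int_{\mathfrak a_+}\text{MA}_{\mathbb R;\pi}(\varphi)\,dx=\tfrac{1}{|W|}\int_{\partial\varphi(\mathfrak a)}\pi\,dy=\int_{\partial\varphi(\mathfrak a)\cap\overline{\mathfrak a_+^*}}\pi\,dy$ up to $\pi$-null wall sets, using the $W$-equivariance of $\partial\varphi$ and the $W$-invariance of $\pi$. With this normalization your duality chain $\partial\psi(\mathfrak a)\subseteq\mathrm{dom}(\psi^*)\subseteq\mathrm{dom}(\varphi^*)\subseteq\overline{\partial\varphi(\mathfrak a)}$, intersected with $\overline{\mathfrak a_+^*}$, gives the lemma at once (and in the example above it yields the correct equality $\tfrac13=\tfrac13$). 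Your closing observation about why the local route via Proposition \ref{compare} is awkward to globalize is well taken.
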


Combining Lemma \ref{compare-global} and the argument of \cite[Lemma 2.7]{Coman-Guedj-Sahin-Zeriahi}, we prove

\begin{lemm}\label{energy-preserve}
Let $\psi$ be a $W$-invariant convex function on $\mathfrak a$ and $u$ its Legendre function. Suppose that for some constant $C$,
\begin{align}\label{up-bound}
\psi\,\leq\, v_{2P}+C,
\end{align}
where $v_{2P}$ is the support function of $2P$.  Then
\begin{align}\label{full-mass}\int_{\mathfrak a_+}\text{MA}_{\mathbb R;\pi}(\psi)dx\,=\,\int_{2P}\pi\,dy,
\end{align}
if $u<+\infty$ everywhere in the interior of $2P$.
\end{lemm}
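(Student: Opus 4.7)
The plan is to prove \eqref{full-mass} via Legendre duality, by identifying the normal mapping image $\partial\psi(\mathfrak a)$ with $2P$ up to a Lebesgue-null set; Definition \ref{weighted-ma-measure} then yields the equality of weighted total masses.

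The hypothesis $\psi\leq v_{2P}+C$ dualizes to $u\geq v_{2P}^*-C$; since $v_{2P}$ is the support function of $2P$, its Legendre transform equals $0$ on $2P$ and $+\infty$ off $2P$, so $u\geq -C$ on $2P$ and $u\equiv+\infty$ outside $2P$. In particular, every subgradient of $\psi$ lies in $2P$, i.e., $\partial\psi(\mathfrak a)\subseteq 2P$. Combined with the assumption $u<+\infty$ on $\text{Int}(2P)$, the function $u$ is proper, lower semi-continuous, $W$-invariant and convex on $\mathfrak a^*$, with effective domain containing $\text{Int}(2P)$. By a standard fact from convex analysis, the subdifferential of such a function is non-empty at every relative-interior point of its effective domain, so $\partial u(y)\neq\emptyset$ for every $y\in\text{Int}(2P)$. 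The Legendre reciprocity $y\in\partial\psi(x)\Leftrightarrow x\in\partial u(y)$ then gives $\text{Int}(2P)\subseteq\partial\psi(\mathfrak a)$, so $\partial\psi(\mathfrak a)$ coincides with $2P$ up to the Lebesgue-null boundary.

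Now exhaust $\mathfrak a$ by a sequence of $W$-invariant bounded convex subdomains $\Omega_j\Subset\mathfrak a$. Definition \ref{weighted-ma-measure} together with monotone convergence gives
$$\int_{\mathfrak a}\text{MA}_{\mathbb R;\pi}(\psi)\,dx \,=\, \int_{\partial\psi(\mathfrak a)}\pi\,dy \,=\, \int_{2P}\pi\,dy.$$
The $W$-invariance of $\psi$ and of the weight $\pi$ (vanishing on the Weyl walls) then reduces the left-hand side to an integral over the positive chamber $\mathfrak a_+$, and rescales the right-hand side accordingly, producing \eqref{full-mass}.

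\textbf{Main obstacle.} The crux is ensuring $\partial u(y)\neq\emptyset$ on $\text{Int}(2P)$, which requires $u$ to be \emph{proper} in the convex-analytic sense. Properness relies on combining $\psi\leq v_{2P}+C$ (for the lower bound $u\geq -C$ on $2P$) with the finiteness $u<+\infty$ on $\text{Int}(2P)$ (so $u\not\equiv+\infty$); either hypothesis alone is insufficient. A secondary technical point is extending Definition \ref{weighted-ma-measure} from compactly-contained subdomains to all of $\mathfrak a$, which can be handled either by the exhaustion above, or by the approximation $\psi_k=\max\{\psi,v_{2P}-k\}\searrow\psi$: each $\psi_k$ satisfies the analogue of \eqref{full-mass} by the same argument (its Legendre transform $u_k$ is bounded on $2P$, making the subdifferential argument trivially applicable), and Lemma \ref{compare-global} together with the finiteness of $u$ controls the limit.
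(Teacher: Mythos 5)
Your argument is correct, and it fills in essentially the content that the paper delegates to a citation: the paper's ``proof'' of this lemma is the single sentence ``Combining Lemma \ref{compare-global} and the argument of [Coman--Guedj--Sahin--Zeriahi, Lemma 2.7], we prove [the lemma]''. Your main line is a clean, self-contained version of that convex-duality argument: $\psi\le v_{2P}+C$ forces $\mathrm{dom}(u)\subseteq 2P$ and hence $\partial\psi(\mathfrak a)\subseteq 2P$, while finiteness of $u$ on $\mathrm{Int}(2P)$ plus the standard fact that a proper lsc convex function has nonempty subdifferential on the interior of its domain, combined with Legendre reciprocity (valid since $\psi$ is finite, hence closed, so $\psi^{**}=\psi$), gives $\mathrm{Int}(2P)\subseteq\partial\psi(\mathfrak a)$; the boundary is Lebesgue-null and the exhaustion/monotone-convergence step is routine. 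Notably, your primary route does not actually need the comparison principle (Lemma \ref{compare-global}) at all, whereas the paper explicitly invokes it; your alternative via $\psi_k=\max\{\psi,v_{2P}-k\}$ is the variant closer to what the citation unfolds to. Two small remarks. First, in your ``main obstacle'' paragraph you attribute the role of $\psi\le v_{2P}+C$ to securing properness of $u$ via $u\ge -C$; in fact $u>-\infty$ is automatic for the Legendre transform of any finite function, and the real job of that hypothesis is the inclusion $\partial\psi(\mathfrak a)\subseteq 2P$ (which your main argument does use correctly), so this is only a mislabeling of where the hypothesis bites. Second, your $W$-invariance reduction at the end produces $\int_{\mathfrak a_+}\text{MA}_{\mathbb R;\pi}(\psi)dx=\int_{2P_+}\pi\,dy$, which differs from the displayed \eqref{full-mass} by the factor $|W|$; this discrepancy is already present in the paper (compare \eqref{volume} and Lemma \ref{approximate-lemma} with the statement of this lemma), so your ``rescales accordingly'' is the right reading and not a gap on your part.
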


The inverse of Lemma \ref{energy-preserve} is also true as an analogue of \cite[Theorem 3.6]{Coman-Guedj-Sahin-Zeriahi}. In fact, we have

\begin{prop}\label{E-space}
Let   $\phi$ be a   $K\times K$-invariant potential.  Then $\psi_\phi$ satisfies \eqref{full-mass} if and only if
$u_\phi$ is finite everywhere in Int$(2P)$.
\end{prop}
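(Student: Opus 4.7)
The plan is to handle the two implications of Proposition \ref{E-space} separately, with the nontrivial content concentrated in the ($\Rightarrow$) direction.

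For the ($\Leftarrow$) direction, I would invoke Lemma \ref{energy-preserve} directly. Under the normalization \eqref{normalization}, Lemma \ref{D-image} yields $\psi_\phi \leq v_{2P}$, so the hypothesis \eqref{up-bound} of Lemma \ref{energy-preserve} holds with $C = 0$. Applying Lemma \ref{energy-preserve} then produces \eqref{full-mass} immediately, with no additional input required.

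For the ($\Rightarrow$) direction, I would argue by contrapositive. Suppose $u_\phi(y_0) = +\infty$ for some $y_0 \in \mathrm{Int}(2P)$; the goal is to show that \eqref{full-mass} fails as a strict inequality. First I would introduce the effective domain
$$E \,:=\, \{y \in \mathfrak{a}^* \,:\, u_\phi(y) < +\infty\},$$
a convex, $W$-invariant subset of $\mathfrak{a}^*$ containing the origin by \eqref{normalization-u}. The key structural inclusion is $\partial \psi_\phi(\mathfrak{a}) \subseteq E$: if $y \in \partial \psi_\phi(x)$, then the subgradient inequality $\psi_\phi(z) \geq \psi_\phi(x) + \langle y, z-x\rangle$ gives $u_\phi(y) = \langle x, y\rangle - \psi_\phi(x) < +\infty$. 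Combined with $\partial \psi_\phi(\mathfrak{a}) \subseteq 2P$ from Lemma \ref{D-image}, this yields $\partial \psi_\phi(\mathfrak{a}) \subseteq E \cap 2P$.

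Next I would produce a non-empty open subset of $\mathrm{Int}(2P) \setminus E$ of positive $\pi$-measure. If $y_0 \notin \overline{E}$, a small ball $B(y_0, \epsilon) \subseteq \mathrm{Int}(2P)\setminus\overline{E}$ already has this property, since $\pi$ is strictly positive off the (measure-zero) Weyl walls. If instead $y_0 \in \partial E$, I would apply the supporting hyperplane theorem to the convex set $E$ at $y_0$ to obtain an open half-space $H^+$ disjoint from $E$; for small $\epsilon$ the set $B(y_0, \epsilon) \cap H^+$ is a non-empty open subset of $\mathrm{Int}(2P)\setminus E$. Using the $W$-invariance of $E$ to replace $y_0$ by a $W$-translate lying in the interior of some Weyl chamber when possible, the resulting open set meets an open chamber where $\pi > 0$ and hence carries positive $\pi$-measure. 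Combining this with Definition \ref{weighted-ma-measure} and the standard fundamental-domain reduction from $\mathfrak{a}$ to $\mathfrak{a}_+$, the total weighted mass of $\text{MA}_{\mathbb R;\pi}(\psi_\phi)$ is strictly less than that of $\pi\,dy$ over $2P$, contradicting \eqref{full-mass}.

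The principal obstacle is the boundary case $y_0 \in \partial E$: the supporting-hyperplane construction must be coupled with the $W$-symmetry of $E$ to ensure that the excluded open region of $\mathrm{Int}(2P)$ is not entirely swallowed by the Weyl walls on which $\pi$ vanishes. The bookkeeping with the $|W|$-factors arising from integrating over the fundamental chamber $\mathfrak{a}_+$ versus over all of $\mathfrak{a}$ must be tracked carefully, but is otherwise routine.
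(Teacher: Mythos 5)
Your proof is correct. Note that the paper offers no argument for Proposition \ref{E-space} at all, deferring to the toric analogue in Coman--Guedj--Sahin--Zeriahi; your argument is the natural adaptation of that proof to the $\pi$-weighted setting: ($\Leftarrow$) is indeed immediate from Lemmas \ref{D-image} and \ref{energy-preserve}, and for ($\Rightarrow$) the inclusion $\partial\psi_\phi(\mathfrak a)\subseteq \mathrm{dom}(u_\phi)\cap 2P$ plus a supporting hyperplane at a point of $\mathrm{Int}(2P)$ where $u_\phi=+\infty$ removes an open set of positive $\pi$-measure from the image of the normal mapping, so \eqref{full-mass} fails strictly. The one precaution you flag is actually unnecessary: $\pi$ is a nonzero polynomial vanishing only on the finite union of Weyl walls, so every non-empty open set automatically has positive $\pi$-measure and no appeal to the $W$-invariance of the effective domain is needed.
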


By Proposition \ref{E-space}, we will follow the arguments in  \cite[Proposition 3.9]{Coman-Guedj-Sahin-Zeriahi} to prove Theorem \ref{E1-legendre}.

\subsection{Proof of Theorem \ref{E1-legendre}}

It is easy to see that \eqref{normalization} is equivalent to \eqref{normalization-u}.  Thus, to prove Theorem \ref{E1-legendre},  we only need to show that
$$\phi\in\mathcal E^1_{K\times K}(M,-K_M)\Longleftrightarrow \int_{2P_+}|u_\phi|\pi\,dy<+\infty.$$

The following lemma can be found in  \cite[Lemma 2.7]{Berman-Berndtsson} (proved in \cite[Appendix]{Berman-Berndtsson}).

\begin{lemm}\label{derivative-dual} Let $\psi$ be a convex function  on  ${\mathfrak a}$ and $u_\psi$ its Legendre dual on $P$.
\begin{itemize}
\item[(1)]  $u_\psi$ is differentiable  at $p$ if and only if $u_\psi$ is attained at a unique point
 $x_p\in {\mathfrak a}$ and $x_p=\nabla u_\psi(p)$;

\item[(2)]  Suppose that  $(\psi-\psi_0)\in \mathcal E^1_{K\times K}(M,-K_M)$.  Let $p\in P$ at  which       $u_\psi$ is  differentiable.   Then for any  continuous uniformly bounded function $v$
on ${\mathfrak a}$,  it holds
\begin{align}\label{dual-derivative}
\left.\frac{d}{dt}\right|_{t=0} u_{\psi+tv}(p)=-v(\nabla u_\psi(p)),
\end{align}
where  $u_{\psi+tv}$ is the Legendre function of  $\psi+tv$  as in (\ref{ Legendre-function-1}) which is well-defined since $ v$ is continuous  and uniformly bounded on  ${\mathfrak a}$.
\end{itemize}
\end{lemm}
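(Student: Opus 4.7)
The plan is to prove the two assertions separately. Part (1) is a standard consequence of the Fenchel--Young equality characterization of the subdifferential, while Part (2) is an envelope-theorem (Danskin-type) argument that exploits the uniqueness of the maximizer provided by Part (1).

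For (1), note that $u_\psi$ is convex, being the supremum of the affine functions $x\mapsto\langle x,p\rangle-\psi(x)$ indexed by $x\in\mathfrak a$. The Fenchel--Young inequality $\langle x,p\rangle\le\psi(x)+u_\psi(p)$ becomes an equality exactly when $x$ attains the supremum in \eqref{ Legendre-function-1}; on the other hand, this same equality is the defining relation for membership $x\in\partial u_\psi(p)$ in the subdifferential. Hence the set of maximizers coincides with $\partial u_\psi(p)$, and $u_\psi$ being differentiable at $p$ is equivalent to this subdifferential reducing to the singleton $\{\nabla u_\psi(p)\}$. This yields (1) with $x_p=\nabla u_\psi(p)$.

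For (2), set $g(t):=u_{\psi+tv}(p)$, which is convex in $t$ as a supremum of functions linear in $t$. I will sandwich $g(t)-g(0)$ in order to identify $g'(0)$. Testing the defining sup at $x=x_p$ yields the lower bound
$$g(t)\ge\langle x_p,p\rangle-\psi(x_p)-tv(x_p)=g(0)-tv(x_p).$$
For the upper bound, for each small $t$ and $\epsilon>0$ pick an $\epsilon$-maximizer $x_{t,\epsilon}$ of $x\mapsto\langle x,p\rangle-\psi(x)-tv(x)$; then
$$g(t)\le\bigl(\langle x_{t,\epsilon},p\rangle-\psi(x_{t,\epsilon})\bigr)-tv(x_{t,\epsilon})+\epsilon\le g(0)-tv(x_{t,\epsilon})+\epsilon.$$

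The main obstacle is to control the almost-maximizer $x_{t,\epsilon}$ and show $v(x_{t,\epsilon})\to v(x_p)$ as $t,\epsilon\to 0$. The key input is the hypothesis $(\psi-\psi_0)\in\mathcal E^1_{K\times K}(M,-K_M)$, so by Theorem \ref{E1-legendre} (and Proposition \ref{E-space}) the dual $u_\psi$ is finite on a neighborhood of $p\in\text{Int}(2P)$; standard convex analysis then forces coercivity $\psi(x)-\langle x,p\rangle\to+\infty$ as $|x|\to\infty$. Combining the two bounds above shows $x_{t,\epsilon}$ is a $(2t\|v\|_\infty+\epsilon)$-approximate maximizer of the unperturbed functional $\langle x,p\rangle-\psi(x)$, hence stays in a fixed bounded set by coercivity; by the uniqueness of $x_p$ from Part (1) we conclude $x_{t,\epsilon}\to x_p$, and continuity of $v$ gives $v(x_{t,\epsilon})\to v(x_p)$. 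Choosing, say, $\epsilon=t^2$, dividing the sandwich by $t$ with appropriate sign care, and letting $t\to 0^{\pm}$ yields $g'_{\pm}(0)=-v(x_p)=-v(\nabla u_\psi(p))$, which is \eqref{dual-derivative}.
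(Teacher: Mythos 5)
Your proof is correct. Note that the paper does not actually prove this lemma — it only cites Berman--Berndtsson (Lemma 2.7 and the Appendix there) — so there is no in-paper argument to compare against; your route (identifying the maximizer set with $\partial u_\psi(p)$ via Fenchel--Young equality and $\psi^{**}=\psi$ for part (1), and the Danskin-type sandwich for part (2), using finiteness of $u_\psi$ near $p$ to get coercivity of $x\mapsto\psi(x)-\langle x,p\rangle$ and hence convergence of approximate maximizers to the unique maximizer $x_p$) is the standard and complete way to establish it, and is essentially what the cited appendix does.
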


\begin{rem}\label{integral-formula} By Lemma \ref{approximate-lemma} and  Part (1) in Lemma \ref{derivative-dual}, we can prove the following:
Let $\phi\in\mathcal E^1_{K\times K}(M,-K_M)$, then for any $K\times K$-invariant continuous uniformly bounded function $f$  on $G$,  it holds
\begin{align}\label{transfer-integral}\int_{M}f\omega_\phi^n\,=\,\int_{2P}f (\partial u_\phi)\pi dy.
\end{align}

\end{rem}

\begin{proof}[Proof of Theorem \ref{E1-legendre}]
\textbf{Necessary part.}
First we show that  $\phi$ has  full MA-mass  by  Proposition \ref{E-space}. In fact,
by a result in   \cite[Lemma 4.5]{LZZ},  we see that for any $W$-invariant convex polytope $2P'\subseteq 2P$, there is a constant $C=C(P')$ such that for any $W$-invariant convex $u_\phi\geq0,$
$$\int_{2P'} u_\phi  \,dy\,\le\,  C\int_{2P} u_\phi \pi \,dy  \, <\, +\infty.$$
This implies  that $u_\phi$ is  finite everywhere in Int$(2P)$ by the convexity of  $u_\phi$.
Thus we get what we want from Proposition \ref{E-space}.

Next we prove that  $\phi$ is $L^1$-integrate associated to the MA-measure $\omega_{\phi}^n$.
 Let $\psi_1=\psi_0+\phi$ ($\phi$ may be different to a constant).  We
define a distance between $\psi_0$ and $\psi_1$ for $p\geq1$,
$$d_p(\psi_0,\psi_1)\,=\,\inf_{\phi_t}\int_0^1\left(\int_M|\dot\phi_t|^p\omega^n_{\phi_t}\right)^{\frac1p}dt,$$
where $\phi_t\in\mathcal E^1(M,-K_M)$   $(t\in[0,1])$ runs over all curves  joining $0$ and $\phi$ with $\omega_{\phi_t}\ge 0$.     Choose  a special path ${\phi_t}$  such that the corresponding Legendre functions of $\psi_t=\psi_0+\phi_t$ are given by
\begin{align}\label{linear-path}
u_t\,=\,tu_1+(1-t)u_0,
\end{align}
where $u_1$ and  $u_0$ are the Legendre functions of $\psi_1$ and $\psi_0$, respectively.
Note that  by Lemma \ref{derivative-dual},
$$\dot\psi_t=-\dot u_t=u_0-u_1,~{\rm almost ~everywhere}.$$
 Then by Lemma \ref{approximate-lemma} (or Remark \ref{integral-formula}), we get
\begin{align}
d_p(\psi_0,\psi_1)&\leq\int_0^1\left(\int_{2P_+}|\dot u_{t}|^p\pi\,dy\right)^{\frac1p}dt\notag\\
&\leq C(p)\left(\int_{2P_+}|u_1|^p\pi\,dy\right)^{\frac1p}+C'(p.\psi_0).
\end{align}
On the other hand, by a result of Darvas \cite{DR},
there are  uniform constant $C_0$ and $C_1$  such that for any  K\"ahler potential $\phi$ with full MA-measure
it holds,
$$-\int_M \phi\omega^n_\phi\,\le\,  C_0 d_1(\psi_0,\psi_1)+C_1. $$
Thus we obtain
$$-\int_M \phi\omega^n_\phi\,\le\,  C.$$
Hence, $\phi\in\mathcal E^1_{K\times K}(M,-K_M)$.

\textbf{Sufficient part.}   Assume that $\phi\in\mathcal E^1_{K\times K}(M,-K_M)$.  We first deal with the case of  $\phi\in L^\infty(M)\cap C^\infty(G)$.  Then
\begin{align}\label{020202}
v_{2P}-C\,\leq\,\psi_\phi\,\leq\, v_{2P}\,\le\, \psi_0+C,
\end{align}
and
 $$\nabla \psi_\phi:\mathfrak a\to 2P$$
is a bijection.
Thus
\begin{align*}
-\phi&=(\psi_0-\psi_\phi)(\nabla u_\phi)\notag\\
&\geq v_{2P}(\nabla u_\phi)-\psi_\phi(\nabla u_\phi)-C_2\geq -C_2.\notag
\end{align*}
Moreover,
\begin{align*}
&(\psi_0-\psi_\phi)(\nabla u_\phi)\notag\\
\ge&  v_{2P}(\nabla u_\phi)-   \psi_\phi(\nabla u_\phi) -C\notag\\
=&\sup_{y'\in 2P} \langle  \nabla u_\phi,   y' \rangle-   \psi(\nabla u_\phi) -C\notag\\
\ge& \langle  \nabla u_\phi,   y\rangle-   \psi(\nabla u_\phi) -C\notag\\
=& u_\phi(y)-C.\notag
\end{align*}
Hence,
\begin{align}\label{020203}
\int_{2P_+} u_\phi \pi\,dy&\leq\int_{2P_+} (\psi_0-\psi_\phi)(\nabla u_\phi) \pi\,dy +C\notag\\
&= \int_M|\phi|\omega_\phi^n +C<+\infty.
\end{align}

For an arbitrary $\phi\in\mathcal E^1_{K\times K}(M,-K_M)$,   we choose  a sequence of smooth $K\times K$-invariant functions $\{\phi_j\}$ decreasing to $\phi$
such that $\phi_j\in C^\infty(G)$ and
$$\sqrt{-1}\partial\bar\partial (\psi_0 +\phi_j)> 0,~{\rm in ~} G. $$
Then as in  (\ref{020203}),  we have
\begin{align}
\int_{2P_+} u_j \pi\,dy&\leq\int_{2P_+} (\psi_0-\psi_j)(\nabla u_j)\pi\,dy\notag\\
&=\int_M|\phi_j|\omega_j^n +C,\notag
\end{align}
where $u_j$ is the Legendre functions of   $\psi_j=\psi_0 +\phi_j.$
 Note that
$$\int_M|\phi_j|\omega_j^n\to\int_M|\phi|\omega_\phi^n$$
and $u_{j}\nearrow u_\phi$.
Thus   by taking the above limit as $j\to+\infty$,  we get \eqref{020203} for $\phi$.
In particular,
$$ \int_{2P_+} u_\phi\pi\,dy <+\infty.$$

\end{proof}

\section{Computation of Ricci potential}

In this section, we assume that the moment polytope $P$  of $Z$ is fine.
  Then by Lemma \ref{admissible},
 $(\psi_{2P}-\psi_0)\in\mathcal E^1_{K\times K}(M, -K_M)$ is a smooth $K\times K$-invariant    K\"ahler potential on $G$.  It follows that
\begin{equation}\label{h0}
-\log\det(\partial\bar{\partial}\psi_{2P})-\psi_{2P}\,=\,h_0
\end{equation}
gives a Ricci potential $h_0$ of $\omega_{2P}$, which is  smooth  and  $K\times K$-invariant  on $G$.

The following proposition gives an upper bound on $h_0$.

\begin{prop}\label{ricci-potential}
 The Ricci potential $h_0$ of $\omega_{2P}$ is uniformly bounded from above on $G$.  In particular, $e^{h_0}$ is uniformly  bounded on $G$.
\end{prop}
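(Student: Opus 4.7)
The plan is to rewrite $h_0$ in terms of the Guillemin function and its Legendre dual, and then carry out an asymptotic analysis near the boundary of $2P$. Substituting the block-diagonal expression for $\det(\mathrm{Hess}_{\mathbb{C}}\psi_{2P})$ from Lemma~\ref{Hessian} into the defining relation (\ref{h0}), using the Legendre duality $\det(\nabla^2\psi_{2P}(x)) = \det(\nabla^2 u_{2P}(y))^{-1}$ with $y = \nabla\psi_{2P}(x)\in 2P$, and invoking $\psi_{2P}(x) = \langle x,y\rangle - u_{2P}(y)$ at the Legendre critical point, one derives
\[
h_0(\exp x) = \log\det(\nabla^2 u_{2P}(y)) - \sum_{\alpha\in\Phi_+}\log\alpha^2(y) + \sum_{\alpha\in\Phi_+}\log\sinh^2\alpha(x) - \psi_{2P}(x) + C.
\]
By $K\times K$-invariance it suffices to bound the right-hand side from above on $\mathfrak{a}_+$. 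On compact subsets of the open chamber $h_0$ is smooth; near a Weyl wall $\alpha(x)=0$ the $W$-equivariance of $\nabla\psi_{2P}$ combined with smoothness of $\psi_{2P}$ forces $\alpha(y)/\alpha(x)\to c_\alpha>0$, so $\log(\sinh^2\alpha(x)/\alpha^2(y))$ stays bounded there.

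The essential regime is $|x|\to\infty$, in which $y=\nabla\psi_{2P}(x)$ approaches the boundary of $2P$. By normal-fan considerations this reduces to analyzing a neighborhood of each vertex $v\in 2P$. The fineness hypothesis guarantees that exactly $r$ facets meet at $v$, with primitive normals $\{u_{A_1},\ldots,u_{A_r}\}$ forming a basis of $\mathfrak{N}$. Setting $t_i := l_{A_i}(y)$, the local expansion $u_{2P}(y) = \tfrac12\sum_i t_i\log t_i + O(1)$ yields by direct computation
\[
\log\det(\nabla^2 u_{2P}(y)) = -\sum_i\log t_i + O(1),\qquad x = -\tfrac12\sum_i(\log t_i)\,u_{A_i} + O(1),
\]
whence $\psi_{2P}(x)=\langle x,y\rangle-u_{2P}(y) = -\sum_i\lambda_{A_i}\log t_i + O(1)$. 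When $v$ lies in the interior of $\overline{\mathfrak{a}_+^*}$ and each $w_{A_i}=e$, the linear asymptotic $\log\sinh^2\alpha(x) = 2\alpha(x)+O(1)$ gives $\sum_{\alpha\in\Phi_+}\log\sinh^2\alpha(x) = 4\rho(x) + O(1) = -2\sum_i\rho(u_{A_i})\log t_i + O(1)$. Plugging these four asymptotics into $h_0$ and applying Brion's identity $\lambda_{A_i} = 1 + 2\rho(u_{A_i})$ (Lemma~\ref{polytope-coefficient}) makes the coefficient of each $\log t_i$ cancel exactly, so $h_0$ stays bounded near $v$.

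The main obstacle is the general vertex: when $v$ lies on a Weyl wall, or some $u_{A_i}$ lies outside the closed positive chamber (so $w_{A_i}\neq e$), the approximation $\sum_\alpha\log\sinh^2\alpha(x)\approx 4\rho(x)$ fails because some $\alpha(x)$ remain bounded. Here I would apply Brion's identity in its general form $\lambda_{A_i}-1 = 2\rho(w_{A_i}u_{A_i})$, use the $W$-invariance of $\sum_\alpha\log\sinh^2\alpha(x)$ to reflect into the Weyl chamber adapted to $v$, and absorb the residual wall-contributions into the bounded quantities $\log(\sinh^2\alpha(x)/\alpha^2(y))$ controlled in the first paragraph. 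The cancellation of the coefficients of $\log t_i$ then proceeds as in the regular case, yielding the uniform upper bound $h_0\le C$ on all of $G$; the claim $e^{h_0}\le C'$ is then immediate.
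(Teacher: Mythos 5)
Your reduction of $h_0$ to the Legendre dual, the vertex-by-vertex asymptotics via fineness, and the cancellation through Brion's identity $\lambda_A=1+2\rho_A(u_A)$ reproduce exactly the paper's treatment of the easy cases (its \emph{Case-1} and \emph{Case-2}). The gap is in your final paragraph, where the genuinely hard case is dispatched with a claim that is false. For a vertex $y_0$ of $2P_+$ lying on a Weyl wall $W_{\alpha_0}$ whose adjacent facets are \emph{not} orthogonal to the wall, the coefficients of $\log t_i$ do \emph{not} cancel. The paper's computation (its \emph{Cases-3.2.1--3.2.3}) shows that after using the reflected facet $F_1=s_{\alpha_0}(F_2)$ and Brion's identity one is left with a residual term
\begin{align*}
2\bigl(\alpha_0(u_2)-1\bigr)\log l_2(y)+O(1),
\end{align*}
which vanishes only when $\alpha_0(u_2)=1$. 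The proposition survives because $\alpha_0(u_2)$ is a positive integer, hence $\geq 1$, so this term tends to $-\infty$ or stays bounded --- giving an upper bound only. Remark \ref{h-remark} of the paper records precisely that $h_0\to-\infty$ at such points when $\alpha_0(u_2)>1$, so the exact cancellation you assert cannot hold, and an argument built on it would purport to prove a two-sided bound that is false for singular compactifications.

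A second, related defect: you propose to ``absorb the residual wall-contributions into the bounded quantities $\log(\sinh^2\alpha(x)/\alpha^2(y))$ controlled in the first paragraph.'' That boundedness was established only for $y$ approaching the Weyl wall while staying in the interior of $2P$. Near a corner $y_0\in W_{\alpha_0}\cap\partial(2P)$ the quantity $\alpha_0(\nabla u_{2P})$ behaves like $\tfrac12\alpha_0(u_2)\log\bigl(1+c\,\alpha_0(y)/l_2(y)\bigr)$, and the ratio $\alpha_0(y)/l_2(y)$ can tend to $0$, to a finite limit, or to $+\infty$ along different approaches; the three regimes give different asymptotics for $\log\sinh\alpha_0(\nabla u_{2P})$ and must be analyzed separately (as the paper does). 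Without this trichotomy, and without the integrality argument $\alpha_0(u_2)\in\mathbb Z_{\geq 1}$ replacing your claimed cancellation, the proof of the upper bound at these corners is missing.
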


\begin{proof} As in \cite[Sections 3.2 and 4.3]{LZZ},
 the  proof is  based on a direct computation of  asymptotic behavior  of $h_0$ near every point of $\partial(2P_+)$.
 Recall that
$$\mathbf J(x)\,=\,\prod_{\alpha\in\Phi_+}\sinh^2\alpha(x)\text{ and }\pi(y)=\prod_{\alpha\in\Phi_+}\alpha^2(y).$$
Since the Ricci potential of $h_0$ is also $K\times K$-invariant, by \eqref{h0} and (\ref{MA}),
\begin{equation}\label{5115}
\begin{aligned}
h_0&=-\log\det(\psi_{{2P},ij})-\psi_{2P}+\log \mathbf J(x)-\log\prod_{\alpha\in\Phi_+}\alpha^2(\nabla\psi_{2P})\\
&=\log\det(u_{{2P},ij})-y_iu_{{2P},i}+u_{2P}+\log \mathbf J(\nabla u_{2P})-\log\pi(y).
\end{aligned}
\end{equation}
Note that
\begin{align*}
u_{{2P},i}&=\frac12\sum_{A=1}^{d_0}(-u_A^i)(1+\log l_A),\\
u_{{2P},ij}&=\frac12\sum_{A=1}^{d_0}\frac{u_{{2P}}^i u_{{2P}}^j}{l_A}
\end{align*}
and
\begin{align*}
\log \mathbf J(t)&=2\sum_{\alpha\in\Phi_+}\log\sinh (t).
\end{align*}
Thus we have
\begin{align}
h_0=&-\sum_{A=1}^{d_0}\log l_A+\frac12\sum_{A=1}^{d_0}(u_A^iy_i)\log l_A\notag\\
&+2\sum_{\alpha\in\Phi_+}\log\sinh(-\frac12\sum_{A=1}^{d_0}\alpha(u_A)\log l_A)-2\sum_{\alpha\in\Phi_+}\log\alpha(y)+O(1).\label{h0-sum}
\end{align}

By  \eqref{h0-sum},  $h_0$ is locally bounded in the interior of $2P_+$.  Thus we need  to prove  that $h_0$ is bounded  from above  near each $y_0\in\partial(2P_+)$.
There will be  three cases as follows.

\emph{Case-1.} $y_0\in\partial(2P_+)$ and is away from any Weyl wall.
\begin{figure}[h]
\begin{center}
\includegraphics[width=1.5in]{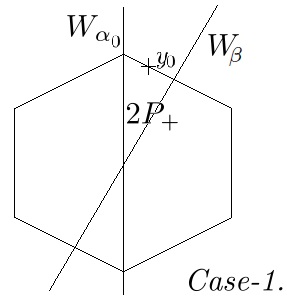}\\
\end{center}
\end{figure}
Note that
\begin{align}
\log\sinh(t)&=\left\{\begin{aligned}&t+O(1),~t\to +\infty,\\&\log t+O(1),~t\to 0^+.\end{aligned}\right.\label{logsinh}
\end{align}
Then we get as  $y\to y_0$,
\begin{align*}
\sum_{\alpha\in\Phi_+}\log\sinh(-\frac12\sum_{A=1}^{d_0}\alpha(u_A)\log l_A)=
-\sum_A\rho(u_A)\log l_A+O(1).
\end{align*}
 By \eqref{h0-sum},  it follows that
\begin{equation}\label{5116}
h_0=-\sum_{\{A|l_A(y_0)=0\}}\left(1-\frac12y_iu_{A}^i+2\rho_iu_{A}^i\right)\log l_{A}(y)+O(1).\nonumber
\end{equation}
However, by  Lemma \ref{polytope-coefficient}, we have
\begin{equation}\label{5116}
h_0=-\frac12\sum_{\{A|l_A(y_0)=0\}}l_{A}(y)\log l_{A}(y)+O(1).\nonumber
\end{equation}
Hence $h_0$ is bounded near $y_0$.

\emph{Case-2.} $y_0$ lies on some Weyl walls but away from any facet  of $2P$.
\begin{figure}[h]
\begin{center}
  \includegraphics[width=1.5in]{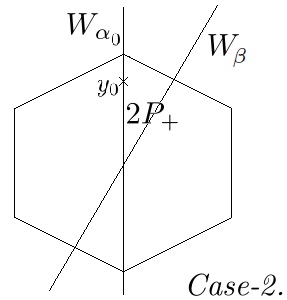}\\
\end{center}
\end{figure}
In this case it is direct to see that $h_0$ is bounded near $y_0$ since
$$\log\det(u_{{2P},ij}),~y_iu_{{2P},i}, ~\frac{\mathbf J(\nabla u_{2P})}{\pi(y)}$$
are all bounded.

\emph{Case-3.} $y_0$ lie on the intersection of $\partial (2P)$ with some Weyl walls. In this case, by \eqref{polytope-coefficient}, we rewrite \eqref{h0-sum} as
\begin{align}
h_0=&2\sum_{A=1}^{d_0}\rho_A(u_A)\log l_A+2\sum_{\alpha\in\Phi_+}\log\sinh(-\frac12\sum_{A=1}^{d_0}\alpha(u_A)\log l_A)\notag\\
&-2\sum_{\alpha\in\Phi_+}\log\alpha(y)+O(1)\notag\\
=&\sum_{\alpha\in\Phi_+}\left[\sum_{A=1}^{d_0}|\alpha(u_A)|\log l_A+2\log\sinh(-\frac12\sum_{A=1}^{d_0}\alpha(u_A)\log l_A)\right.\notag\\
&\left.-2\log\alpha(y)\right]+O(1),~y\to y_0.\notag
\end{align}
Here we used a fact that
\begin{align*}
2\rho_A(u_A)=\sum_{\alpha\in\Phi_+}|\alpha(u_A)|.
\end{align*}

Set
$$I_\alpha(y)=\sum_{A=1}^{d_0}|\alpha(u_A)|\log l_A+2\log\sinh(-\frac12\sum_{A=1}^{d_0}\alpha(u_A)\log l_A)-2\log\alpha(y)$$
for each $\alpha\in\Phi_+$. Then
\begin{align}\label{h0-sum-1}
h_0(y)=\sum_{\alpha\in\Phi_+}I_\alpha(y)+O(1),~y\to y_0.
\end{align}
Note that each $I_\alpha(y)$  involves only one root $\alpha$.  Thus, without  loss of generality,  we may assume that  $y_0$ lies on only one Weyl wall.

Assume that $y_0\in \partial (2P)\cap W_{\alpha_0}$ for some simple Weyl wall $W_{\alpha_0}$,  $\alpha_0\in\Phi_+$ and it  is away from other Weyl walls.  Now we estimate each   $I_\alpha(y)$ in  (\ref{h0-sum-1}). When $\beta\not=\alpha_0$,  it is easy to see that
$$\beta(y)\to c_\beta>0,~ {\rm as}~y\to y_0.$$
Then, by \eqref{logsinh},  we have
\begin{align*}
\log\sinh(-\frac12\sum_{A=1}^{d_0}\beta(u_A)\log l_A)&=-\frac12\sum_{\{A|l_A(y_0)=0\}}\beta(u_A)\log l_A+O(1),
\forall\beta\not=\alpha_0.
\end{align*}
Note that $y_0\in\{\beta(y)>0\}$. Thus any facet $F_A$ passing through  $y_0$ lies in  $\{\beta(y)>0\}$ or is orthogonal to $W_\beta$.  Since $2P$ is convex and $s_\beta$-invariant, where $s_\beta$ is  the reflection with respect to $W_\beta$,  these facets must satisfy
 $$\beta(u_A)\ge 0.$$
Hence,    for any $\beta\not=\alpha_0,$ we get
\begin{align}\label{beta-term}
I_\beta(y)=&\sum_{A=1}^{d_0}|\beta(u_A)|\log l_A-2\sum_{A=1}^{d_0}|\beta(u_A)|\log l_A-2\log\beta(y)\notag\\
=&O(1),~{\rm as}~y\to y_0.
\end{align}

 It remains to estimate  the second term in $I_{\alpha_0}(y)$,
 \begin{align}\label{bad-term}
 \log\sinh(-\frac12\sum_A\alpha_0(u_A)\log l_A).
 \end{align}
We first consider a simple case that  $y_0$ lies on the intersection of $W_{\alpha_0}$ with at most two facets of $2P$. Then  there will be  two subcases:  $y_0\in W_{\alpha_0} \cap F_1$ where $F_1$ is orthogonal to $W_{\alpha_0}$, or $y_0\in  W_{\alpha_0}\cap F_1\cap F_2$, where $F_1, F_2$ are two facets of $P$.

\emph{Case-3.1.} $y_0\in W_{\alpha_0} \cap F_1$  is away from other facet of $2P$.  Then  $F_1$ is orthogonal to $W_{\alpha_0}$.
\begin{figure}[h]
\begin{center}
  \includegraphics[width=1.5in]{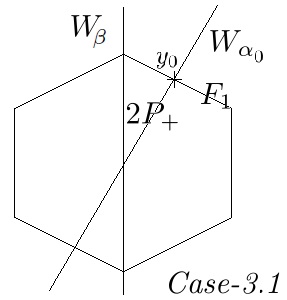}\\
\end{center}
\end{figure}
It follows that  $l_A(y_0)\not=0$ for any $A\not=1$.  Thus
\begin{align}\label{small-inner} \langle\alpha_0,y\rangle=o(l_A(y)),y\to y_0,A\not=1.
\end{align}

Let $\{F_1,...,F_{d_1}\}$ be all facets of $P$ such that $\alpha_0(u_{A})\geq0,A=1,...,d_1$. Let $s_{\alpha_0}$ be the reflection with respect to $W_{\alpha_0}$.  Then by $s_{\alpha_0}$-invariance of $P$, for each $A'\not\in\{1,...,d_1\}$ there is some $A\in\{1,...,d_1\}$ such that
$$l_{A'}=l_A+2\frac{\alpha_0(u_A)\langle\alpha_0,y\rangle}{|\alpha_0|^2}.$$
It follows that
\begin{align}
\alpha_0(\nabla u_{2P})&=-\frac12\sum_{A=1}^{d_0}\alpha_0(u_A)\log l_A\notag\\
&=\frac12\sum_{A=2}^{d_1}\alpha_0(u_A)\log\left(1+2\frac{\alpha_0(u_A)\langle\alpha_0,y\rangle}
{|\alpha_0|^2l_A(y)}\right).\notag
\end{align}
Thus, by (\ref{small-inner}) and  the fact that $\alpha_0(u_1)=0$, we obtain
\begin{align*}
&\log\sinh(-\frac12\sum_{A=1}^{d_0}\alpha_0(u_A)\log l_A)\\
&=\log\sinh\sum_{A=2}^{d_1}\alpha_0(u_A)\log\left(1+2\frac{\alpha_0(u_A)\langle\alpha_0,y\rangle}{|\alpha_0|^2l_A(y)}\right)\\
&=\log \langle\alpha_0,y\rangle+O(1).
\end{align*}
Hence
\begin{align*}
&I_{\alpha_0}(y)=O(1),~\text{as }y\to y_0.
\end{align*}
Together with \eqref{beta-term}, we see that
$h_0$ is  bounded near $y_0$.

\emph{Case-3.2.} $y_0\in W_{\alpha_0} \cap F_1\cap F_2$  and is away from other facets  of $2P$.
\begin{figure}[h]
\begin{center}
  \includegraphics[width=1.5in]{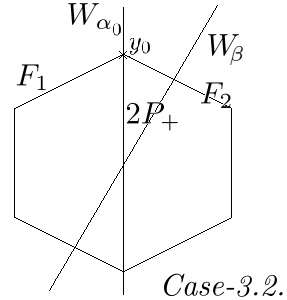}\\
\end{center}
\end{figure}
 By the $W$-invariance of $2P$, it must hold $F_1=s_{\alpha_0}(F_2)$. We may assume that $F_2\subseteq\overline{\mathfrak a_+}$ and then
$$l_1=l_2+\frac{2\alpha_0(u_2)\langle\alpha_0,y\rangle}{|\alpha_0|^2}.$$

As $y\to y_0$ we have
\begin{align*}
&\alpha_0(y),l_1(y),l_2(y)\to0,\\
&l_A(y)\not\to0,~\forall A\not=1,2.
\end{align*}
It follows that
\begin{align}
\sum_{A=1}^{d_0}|\alpha_0(u_A)|\log l_A=&\alpha_0(u_2)(\log l_1+\log l_2)+O(1).\label{other-terms-1}
\end{align}
Then  the second term in $I_{\alpha_0}(y)$ becomes
\begin{align*}
&\log\sinh(-\frac12\sum_{A=1}^{d_0}\alpha_0(u_A)\log l_A)\\
&=\log\sinh\frac12\left[\alpha_0(u_2)\log\left(1+2\frac{\alpha_0(u_2)\langle\alpha_0,y\rangle}{|\alpha_0|^2l_2(y)}\right)\right.\\
&\left.+\sum_{A\not=2,\alpha_0(u_A)>0}^{d_1}\alpha_0(u_A)\log\left(1+2\frac{\alpha_0(u_A)\langle\alpha_0,y\rangle}{|\alpha_0|^2l_A(y)}\right)\right].\\
\end{align*}
We will settle it down according  to the different rate of $\frac{\alpha_0(y)}{l_2(y)}$ below.

\emph{Case-3.2.1.} $\alpha_0(y)=o(l_2(y))$. Then
\begin{align}\label{log-sinh-alpha-321}
\log\sinh\alpha_0(\nabla u_{2P})=\log\alpha_0(y)-\log l_2(y).
\end{align}
Note that $s_{\alpha_0}(u_1)=u_2\in\overline{\mathfrak a_+}$, we have
$$\sum_{A=1,2}|\alpha_0(u_A)|\log l_A=\alpha_0(u_2)(\log l_1+\log l_2).$$
Using the above relation, \eqref{other-terms-1} and \eqref{log-sinh-alpha-321}, we get
\begin{align}\label{5116+}
I_{\alpha_0}(y)=&\alpha_0(u_2)\log l_1+(\alpha_0(u_2)-2)\log l_2+O(1)\notag\\
=&2(\alpha_0(u_2)-1)\log l_2+O(1).
\end{align}
Here we used $l_1=l_2(1+o(1))$ in the last equality.

Note that by our assumption
 $\alpha_0(u_2)>0$. Then
  $$\alpha_0(u_2)\geq1, $$
 since $\alpha_0(u_2)\in\mathbb Z$.
Hence, as  $l_1(y), l_2(y) \to0^+$, by \eqref{h0-sum-1}, \eqref{beta-term} and \eqref{5116+},  we see that $h_0$ is bounded from above in this case.

\emph{Case-3.2.2.} $c\leq\frac{\alpha_0(y)}{l_2(y)}\leq C$ for some constants $C,c>0$. Then
\begin{align*}
\log\alpha_0(y)=\log l_2+O(1),~\log\sinh\alpha_0(\nabla u_{2P})=O(1)
\end{align*}
and the right hand side of \eqref{h0-sum-1}  becomes
\begin{align}\label{5117}
&\alpha_0(u_2)(\log l_1+
\log l_2)-2\log \alpha_0(y)+O(1)\notag\\
=&2(\alpha_0(u_2)-1)\log l_2+O(1).
\end{align}
Again $h_0$ is  also  bounded from above.

\emph{Case-3.2.3.} $\frac{\alpha_0(y)}{l_2(y)}\to+\infty$. Then
\begin{align*}
\log\sinh\alpha_0(\nabla u_{2P})=&\frac12\alpha_0(u_2)(\log\alpha_0(y)-\log{l_2(y)}),\\ l_1(y)=&\alpha_0(y)(1+o(1))
\end{align*}
and the right hand side of \eqref{h0-sum-1} becomes
\begin{align}\label{5118}
&\alpha_0(u_2)(\log l_1+\log l_2)+\alpha_0(u_2)(\log\alpha_0(y)-\log l_2(y))\notag\\&-2\log\alpha_0(y)+O(1)\notag\\
=&\alpha_0(u_2)\log l_1+[\alpha_0(u_2)-2]\log \alpha_0(y)+O(1)\notag\\
=&2(\alpha_0(u_2)-1)\log \alpha_0(y)+O(1).
\end{align}
Hence
$h_0$ is  bounded from above as in  \emph{Case-3.2.1}.

Next we consider the case that  there are facets  $F_1...,F_s~(s \ge 3)$ such that
$$y_0\in W_{\alpha_0}\cap F_1\cap...\cap F_s $$
and it
is away from any other facet of $2P$.  We only need to control the term  (\ref{bad-term}) as above. If  $F_1,...,F_s$ are all orthogonal to $W_{\alpha_0}$ as in  \emph{Case-3.1}, we see that $h_0(y)$ is uniformly  bounded. Otherwise, for any $y$ nearby $y_0$   there is a  facet  $F=F_{i'}$  for some $i'\in \{1,...,s\}$ such that
 $$l_{i'}(y)=\min\{l_i(y)|~i=1,...,s\text{ such that }\alpha_0(u_i)\not=0\}.$$
As $y\to y_0$, up to passing to a subsequence, we can fix this $i'$. Clearly, $y_0\in W_{\alpha_0}\cap F_1\cap  F_2$ as in   \emph{Case-3.2}, where   $F_2=F\subseteq\overline{\mathfrak a_+}$ and $F_1=s_{\alpha_0}(F)$ for the reflection $s_{\alpha_0}$.   Hence by following   the argument in  \emph{Case-3.2},  we can  also prove   that $h_0(y)$ is uniformly  bounded from above.
Therefore,  the proposition  is true in \emph{Case-3}.
The proof of our proposition is completed.
\end{proof}

\begin{rem}\label{h-remark} We note that $h_0$ is always uniformly bounded  in {Case-1},  {Case-2} and {Case-3.1}.  Furthermore, if rank$(G)=2$,  there are at most  two facets $F_1,F_2$ intersecting at a same point $y_0$ of $W_{\alpha_0}$ as in {Cases-3.2.1-3.2.3}, thus, by the asymptotic expressions of $h_0$  in \eqref {5116+},  \eqref{5117} and \eqref{5118}, respectively,  we see that $h_0$ is   uniformly bounded if and only if the following relation holds,
\begin{align}\label{uniform-relation}
\alpha_0(u_2)\,=\,1.
\end{align}
In the other words, in {Cases-3.2.1-3.2.3},
 $$\lim_{y\to y_0}h_0\,=\,-\infty,$$
 if (\ref{uniform-relation}) does not hold.
\end{rem}

\section{Reduced Ding functional and existence criterion}

By Lemma \ref{energy-preserve} and Theorem \ref{E1-legendre}, we see that for any $u\in{\mathcal  E}^1_{K\times K}(2P)$, its Legendre function
  $$\psi_u(x)\,=\,\sup_{y\in 2P}\{\langle x, y\rangle-u(y)\}\,\le\, v_{2P}(x)$$
  corresponds to  a $K\times K$-invariant weak K\"ahler potential $\phi_u=\psi_u-\psi_0$ which belongs to $\mathcal E^1_{K\times K}(M,-K_M)$. Here we can choose $\psi_0$ to be the Legendre function $\psi_{2P}$ of Guillemin function $u_{2P}$ as
  in (\ref{ Legendre-function}).
  As we know,  $e^{-\phi_u}\in L^p(\omega_0)$ for any $p\ge 0$. Thus $\int_{\mathfrak a_+}e^{-\psi_u}\mathbf J(x)dx$ is well-defined.

We introduce the following  functional   on $\mathcal E^1_{K\times K}(2P)$ by
  $$\mathcal D(u)\,=\,\mathcal L(u)+\mathcal F(u),$$
where
$$\mathcal L(u)\,=\,\frac1V\int_{2P_+}u\pi\,dy-u(4\rho)$$
and
$$\mathcal F(u)\,=\,-\log\left(\int_{\mathfrak a_+}e^{-\psi_u}\mathbf J(x)dx\right)+u(4\rho).$$
It is easy to see that on  a smooth Fano  compactification  of $G$,
$$\mathcal L(u_\phi)+u_\phi(4\rho)\,=\,-\frac1{(n+1)V}\sum_{k=0}^n\int_M\phi\omega_\phi^k\wedge\omega_0^{n-k}$$
and $\mathcal D(u_\phi)$ is just the Ding functional $F(\phi)$. We note that
a similar functional on  such  Fano  manifolds has been studied for Mabuchi solitons  in  \cite[Section 4]{LZ}).
Hence, for convenience, we call $\mathcal D(\cdot)$ the reduced Ding functional  on a $\mathbb Q$-Fano compactifications of $G$.

In this section, we will use the variation method to prove Theorem \ref{LTZ2}  by verifying the properness of $\mathcal D(\cdot)$. We assume that the moment  polytope $P$  is fine
so that the Ricci potential $h_0$ is uniformly bounded  above by Proposition \ref{ricci-potential}.

\subsection{A criterion for the properness of  $\mathcal D(\cdot)$}
 In this subsection, we establish a  properness criterion for $\mathcal D(u_\phi)$, namely,

\begin{prop}\label{ding-proper}   Let  $M$ be a $\mathbb Q$-Fano compactification of $G$. Suppose that the  moment  polytope $P$ is fine and it  satisfies \eqref{bary}. Then there are constants $\delta$ and $C_\delta$ such that
\begin{align}\label{proper-d}
\mathcal D(u)\,\geq\,  \delta \int_{2P_+}u\pi(y)\,dy+ C_\delta,~u\in\mathcal E^1_{K\times K}(2P).
\end{align}

\end{prop}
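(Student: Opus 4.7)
I would split the reduced Ding functional as $\mathcal{D}(u) = \mathcal{L}(u) + \mathcal{F}(u)$ and estimate each piece, following the variational approach of Donaldson \cite{Do02} together with the reduced-Ding modification of Li-Zhu \cite{LZ} and the pluripotential framework of \cite{Berman-Berndtsson}.

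\textbf{Step 1 (linear part).} Using the strict barycenter condition $bar(2P_+) - 4\rho \in \Xi$ (and the openness of $\Xi$), together with the convexity and $W$-invariance of $u$ and the normalization $\inf u = u(O) = 0$, I establish
\begin{equation*}
\mathcal{L}(u) \geq 2\delta_0 \int_{2P_+} u\,\pi\,dy - C_0
\end{equation*}
for some $\delta_0 > 0$ and $C_0$ depending only on $P$. This is the convex-geometric content of the strict barycenter condition: one dominates $u(4\rho)$ by a $\pi$-weighted average of $u$ on $2P_+$ using convexity, and extracts the positive factor $2\delta_0$ from the slack in $bar(2P_+) - 4\rho \in \Xi$. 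The argument follows the lines of \cite[Lemma 4.5]{LZZ}.

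\textbf{Step 2 (non-linear part and combining).} Fix $\epsilon \in (0, 1)$ and set $y_\epsilon := (1-\epsilon)\,bar(2P_+) + \epsilon \cdot 4\rho$, which lies in $2P_+$ and satisfies $y_\epsilon - 4\rho = (1-\epsilon)(bar(2P_+) - 4\rho) \in \Xi$. Since $\mathbf{J}(x) \sim e^{4\rho(x)}$ as $|x| \to \infty$ in $\mathfrak{a}_+$, the quantity $I_\epsilon := \int_{\mathfrak{a}_+} e^{-\langle x, y_\epsilon\rangle}\mathbf{J}(x)\,dx$ is finite. From the Legendre bound $\psi_u(x) \geq \langle x, y_\epsilon\rangle - u(y_\epsilon)$ one gets $\int_{\mathfrak{a}_+}e^{-\psi_u}\mathbf{J}\,dx \leq e^{u(y_\epsilon)} I_\epsilon$. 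Combining with the convexity estimate $u(y_\epsilon) \leq (1-\epsilon)u(bar(2P_+)) + \epsilon u(4\rho)$ and Jensen's inequality $u(bar(2P_+)) \leq \frac{1}{V}\int_{2P_+} u\pi\,dy$ for the probability measure $\pi\,dy/V$ on $2P_+$, I obtain
\begin{equation*}
\mathcal{F}(u) \geq -(1-\epsilon)\frac{1}{V}\int_{2P_+}u\pi\,dy + (1-\epsilon) u(4\rho) - \log I_\epsilon.
\end{equation*}
Adding this to $\mathcal{L}(u) = \frac{1}{V}\int_{2P_+} u\pi\,dy - u(4\rho)$ gives $\mathcal{D}(u) \geq \epsilon \mathcal{L}(u) - \log I_\epsilon$, and Step 1 then yields \eqref{proper-d} with $\delta = 2\epsilon\delta_0/V$ and $C_\delta = -\epsilon C_0 - \log I_\epsilon$.

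\textbf{Main obstacle.} The main technical difficulty is Step 1, where the strict barycenter condition must be converted into a quantitative linear-in-$\int u\pi$ lower bound valid for every $u \in \mathcal{E}^1_{K\times K}(2P)$; this requires delicate convex-geometric reasoning on the $\pi$-weighted measure on $2P_+$ and the specific location of $4\rho$ relative to the Weyl chambers. The upper bound on $h_0$ from Proposition \ref{ricci-potential} is indispensable for the framework in which $\mathcal{D}$ is defined and for comparing $e^{-\psi_{2P}}\mathbf{J}(x)\,dx$ with the Monge-Amp\`ere measure of $\omega_{2P}$ via \eqref{h0} and \eqref{MA}: without it the nonlinear part $\mathcal{F}$ need not be finite-valued on all of $\mathcal{E}^1_{K\times K}(2P)$, so the properness estimate would have no domain on which to hold. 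As Remark \ref{h-remark} indicates, the fineness assumption is sharp already in the rank-$2$ case.
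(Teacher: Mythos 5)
Your treatment of the nonlinear term is genuinely different from the paper's. The paper controls $\mathcal F$ variationally: it introduces the modified functional $\mathcal D_A(u)=\mathcal L^0_A(u)+\mathcal F(u)$ with weight $A(y)$ built from the Ricci potential $h_0$ of the Guillemin metric, observes that the Guillemin function $u_0$ is a critical point, and uses convexity of $\mathcal F$ along linear paths of Legendre duals (Lemma \ref{F-convex}, via Prekopa--Leindler) to get $\mathcal F(u)\ge \mathcal D_A(u_0)-\mathcal L^0_A(u)$; the boundedness of $A$ (i.e.\ Proposition \ref{ricci-potential}) and the rescaling inequality of Lemma \ref{non-linear-lem} then close the argument as in \cite[Proposition 4.9]{LZ}. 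Your route instead bounds $\int_{\mathfrak a_+}e^{-\psi_u}\mathbf J\,dx$ directly by the single supporting hyperplane at $y_\epsilon=(1-\epsilon)bar(2P_+)+\epsilon\cdot 4\rho$ and Jensen's inequality, obtaining $\mathcal D(u)\ge\epsilon\,\mathcal L(u)-\log I_\epsilon$. When it works this is cleaner, and it does not use Proposition \ref{ricci-potential} at all --- so your closing claim that the upper bound on $h_0$ is ``indispensable'' for the properness estimate, or for finiteness of $\mathcal F$, is not accurate for your own argument (finiteness of $\mathcal F$ follows from the support-function bound $\psi_u\ge(1-\epsilon_0)v_{2P}-M_{\epsilon_0}$ as in Proposition \ref{lsc-prop}; $h_0$ enters the paper's proof only through $\mathcal D_A$).

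There is, however, a concrete gap in Step 2: the finiteness of $I_\epsilon=\int_{\mathfrak a_+}e^{-\langle x,y_\epsilon\rangle}\mathbf J(x)\,dx$ requires $\langle x,\,y_\epsilon-4\rho\rangle\gtrsim|x|$ on $\overline{\mathfrak a_+}$, i.e.\ that $y_\epsilon-4\rho$ lie in the \emph{interior} of the dual cone of $\overline{\mathfrak a_+}$. The hypothesis only places $y_\epsilon-4\rho$ in $\Xi$, the \emph{relative} interior of the cone generated by $\Phi_+$. For semisimple $G$ these coincide and your argument is complete; but $G$ is only assumed reductive, and when $G$ has a positive-dimensional center, $\Xi$ spans only the semisimple part of $\mathfrak a^*$, $\langle x,y_\epsilon-4\rho\rangle$ vanishes on the central directions of $\mathfrak a_+$, and $I_\epsilon=+\infty$ (already in the toric sub-case $\Phi_+=\emptyset$ your bound reads $\int_{\mathfrak a}dx$). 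The repair is to replace the single linear minorant by the support function of a shrunken polytope: $\psi_u\ge(1-\epsilon)v_{2P}-\sup_{(1-\epsilon)2P}u$ with $4\rho\in(1-\epsilon)\mathrm{Int}(2P)$, which makes the resulting integral converge in all directions, and then to control $\sup_{(1-\epsilon)2P}u$ by $C_\epsilon\int_{2P_+}u\,\pi\,dy$ using the $W$-invariant convexity estimate of \cite[Lemma 4.5]{LZZ}; this costs a term $-C_\epsilon'\int_{2P_+}u\,\pi\,dy$ in the lower bound for $\mathcal F$, which is still absorbed by Step 1 after choosing $\epsilon$ (equivalently, the constant in front) small. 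Step 1 itself is only asserted by reference, but the paper does the same (Lemma \ref{linear-proper-thm}), so that is acceptable.
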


The proof goes almost the same as in \cite{LZ}. We sketch the arguments here for completeness.  First we note that $u_\phi$ satisfies the normalized condition
$u\geq u(O)=0$. Then we have the following  estimate for the linear term  $\mathcal L(\cdot)$ as in \cite[Proposition 4.5]{LZ}.

\begin{lemm}\label{linear-proper-thm}
Under the assumption (\ref{bary}), there exists a constant $\lambda>0$ such that
\begin{eqnarray}
\mathcal L(u)\,\geq\, \lambda\int_{2P_+}u\pi(y)\,dy,~\forall u\in {\mathcal  E}^1_{K\times K}(2P).
\end{eqnarray}
\end{lemm}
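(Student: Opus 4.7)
The proof follows the strategy of \cite[Proposition 4.5]{LZ}. Since $\frac{1}{V}\int_{2P_+}\pi\,dy=1$, we may rewrite
$$\mathcal L(u) \;=\; \frac{1}{V}\int_{2P_+}\bigl(u(y)-u(4\rho)\bigr)\pi(y)\,dy,$$
and for any subgradient $\xi\in\partial u(4\rho)\subset\mathfrak a$, the pointwise convexity inequality $u(y)-u(4\rho)\geq \langle\xi,y-4\rho\rangle$ integrated against $\pi\,dy$ yields
$$\mathcal L(u)\;\geq\;\langle\xi,\eta\rangle,\qquad \eta := bar(2P_+)-4\rho\in\Xi,$$
where $\eta\in\Xi$ is the assumption \eqref{bary}.

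The heart of the argument is to show that every $\xi\in\partial u(4\rho)$ lies in the dual cone $\overline{\mathfrak a_+}$ of $\overline\Xi$. Fix $\alpha\in\Phi_+$ and decompose (using the Cartan--Killing form on $\mathfrak a^*$) $4\rho = y_\alpha+c\alpha$ with $\alpha(y_\alpha)=0$ and $c=\alpha(4\rho)/|\alpha|^2>0$. Because $u$ is $W$-invariant, in particular $u\circ s_\alpha=u$, the function $s\mapsto u(y_\alpha+s\alpha)$ is convex and even in $s$, hence non-decreasing in $|s|$. Its directional derivative in direction $\alpha$ at $s=c>0$ is therefore non-negative, giving
$$\alpha(\xi)\;=\;\langle\xi,\alpha\rangle\;\geq\;0$$
for every $\xi\in\partial u(4\rho)$. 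Since this holds for all $\alpha\in\Phi_+$, we conclude $\xi\in\overline{\mathfrak a_+}$. (For non-smooth $u\in\mathcal E^1_{K\times K}(2P)$, approximate by smooth $W$-invariant strictly convex functions as in Lemma \ref{approximate-lemma} and pass to the limit, noting $\overline{\mathfrak a_+}$ is closed.)

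We now upgrade to the proper inequality. Because $4\rho=2\sum_{\alpha\in\Phi_+}\alpha$ lies in the open cone $\Xi$, and $\eta\in\Xi$, openness of $\Xi$ provides $\lambda_1>0$ with $\eta-\lambda_1\cdot 4\rho\in\overline\Xi$. Pairing this element against $\xi\in\overline{\mathfrak a_+}$ gives $\langle\xi,\eta\rangle\geq \lambda_1\langle\xi,4\rho\rangle$. On the other hand, applying the subgradient inequality for $\xi\in\partial u(4\rho)$ at the test point $O$, together with $u(O)=0$, yields $\langle\xi,4\rho\rangle\geq u(4\rho)$. Combining these two estimates,
$$\mathcal L(u)\;\geq\;\lambda_1\,u(4\rho).$$
Finally, writing $u(4\rho)=\frac{1}{V}\int_{2P_+}u\pi\,dy-\mathcal L(u)$ and rearranging,
$$\mathcal L(u)\;\geq\;\frac{\lambda_1}{(1+\lambda_1)V}\int_{2P_+}u\pi\,dy,$$
which is the desired bound with $\lambda=\lambda_1/[(1+\lambda_1)V]$.

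The principal technical obstacle is the intermediate step establishing $\partial u(4\rho)\subset\overline{\mathfrak a_+}$; this relies essentially on the $W$-invariance of $u$ together with the fact that $4\rho$ lies in the (open) positive Weyl chamber, so the even-convex function argument along each root direction $\alpha\in\Phi_+$ is carried out at a strictly positive parameter value.
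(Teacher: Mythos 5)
Your proof is correct and is essentially the argument the paper delegates to \cite[Proposition 4.5]{LZ}: the subgradient inequality at $4\rho$ combined with the observation that $W$-invariance forces $\partial u(4\rho)\subset\overline{\mathfrak a_+}$ (since the restriction of $u$ to each line $y_\alpha+s\alpha$ is even and convex, hence non-decreasing on $[0,c]$, so every subgradient there is non-negative), followed by the standard rearrangement using $u(O)=0$ and the relative openness of $\Xi$. All steps check out, including the normalization $u\geq u(O)=0$ and $V=\int_{2P_+}\pi\,dy$, so no further comment is needed.
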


For the non-linear term $\mathcal F(\cdot)$, we can also get an analogy of \cite[Lemma 4.8]{LZ} as follows.

\begin{lemm}\label{non-linear-lem}
For any $\phi\in\mathcal E^1_{K\times K}(M,-K_M)$, let $$\tilde\psi_\phi:=\psi_\phi-4\rho_ix^i,~x\in\mathfrak a_+.$$
Then
\begin{eqnarray}\label{0406}
\mathcal F(u_\phi)\,=\,-\log\left(\int_{\mathfrak a_+}e^{-(\tilde\psi_\phi-\inf_{\mathfrak a_+}\tilde\psi_\phi)}\prod_{\alpha\in\Phi_+}\left(\frac{1-e^{-2\alpha_ix^i}}{2}\right)^2dx\right).
\end{eqnarray}
Consequently, for any $c>0$,
\begin{eqnarray}\label{0407}
\mathcal F(u_\phi)\,\geq\, \mathcal F\left(\frac{u_\phi}{1+c}\right)-n\cdot\log(1+c).
\end{eqnarray}
\end{lemm}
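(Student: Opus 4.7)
The plan is to derive (\ref{0406}) by rewriting the measure $\mathbf{J}(x)\,dx$ in a form adapted to the $4\rho$-shift, and then to obtain (\ref{0407}) from a scaling argument combined with two elementary pointwise inequalities.

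For (\ref{0406}), I would start from $\sinh\alpha(x) = \tfrac{1}{2}e^{\alpha(x)}(1-e^{-2\alpha(x)})$, square it and take the product over $\alpha\in\Phi_+$. Since $\sum_{\alpha\in\Phi_+}\alpha = 2\rho$, this gives
\[
\mathbf{J}(x) \,=\, e^{4\rho_ix^i}\,\prod_{\alpha\in\Phi_+}\left(\frac{1-e^{-2\alpha_ix^i}}{2}\right)^2.
\]
Multiplying by $e^{-\psi_\phi}$ converts the exponent into $-\tilde\psi_\phi$; then I factor out $e^{-\inf_{\mathfrak a_+}\tilde\psi_\phi}$ from the integral. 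It remains to verify the identity $u_\phi(4\rho) = -\inf_{\mathfrak a_+}\tilde\psi_\phi$, so that the additive $+u_\phi(4\rho)$ in the definition of $\mathcal F$ cancels exactly the logarithm of the factored-out constant. By Legendre duality $u_\phi(4\rho) = \sup_{x\in\mathfrak a}(\langle 4\rho,x\rangle - \psi_\phi(x))$; to restrict this supremum to $\overline{\mathfrak a_+}$ I would invoke the $W$-invariance of $\psi_\phi$ together with the classical inequality $\langle 4\rho,x\rangle \leq \langle 4\rho,x^+\rangle$ valid for dominant $4\rho$, where $x^+$ is the $W$-translate of $x$ lying in $\overline{\mathfrak a_+}$.

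For (\ref{0407}), set $v = u_\phi/(1+c)$. Rescaling a convex function by a positive constant rescales its Legendre transform in the dual way, giving $\psi_v(x) = \tfrac{1}{1+c}\psi_\phi((1+c)x)$ and hence $\tilde\psi_v(x) = \tfrac{1}{1+c}\tilde\psi_\phi((1+c)x)$ as well as $\inf\tilde\psi_v = \tfrac{1}{1+c}\inf\tilde\psi_\phi$. Substituting into (\ref{0406}) and changing variables $y = (1+c)x$ in $\mathfrak a_+$ (Jacobian $(1+c)^{-r}$), the integrand for $\mathcal F(v)$ becomes
\[
(1+c)^{-r}\,e^{-\frac{1}{1+c}(\tilde\psi_\phi(y)-\inf\tilde\psi_\phi)}\prod_{\alpha\in\Phi_+}\left(\frac{1-e^{-2\alpha_iy^i/(1+c)}}{2}\right)^2\,dy.
\]
Using that $\tilde\psi_\phi-\inf\tilde\psi_\phi\geq 0$ (so $e^{-\frac{1}{1+c}(\cdot)}\geq e^{-(\cdot)}$) together with the elementary inequality $1-e^{-t/(1+c)} \geq (1-e^{-t})/(1+c)$ for $t\geq 0$ (which is checked by comparing derivatives at $t=0$), the latter contributes a factor $(1+c)^{-2|\Phi_+|}$ after squaring and taking the product over $\Phi_+$. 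Combining with the Jacobian and invoking the identity $r + 2|\Phi_+| = n$ (complex dimension of $G$), the total loss is $(1+c)^{-n}$, and taking $-\log$ yields (\ref{0407}).

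The main obstacle I anticipate is the Legendre-duality identification $u_\phi(4\rho) = -\inf_{\mathfrak a_+}\tilde\psi_\phi$, because one must justify restricting the Legendre supremum from $\mathfrak a$ to $\mathfrak a_+$ using both the $W$-invariance of $\psi_\phi$ and the dominance of $4\rho$; this cancellation is what makes the normalized exponent $\tilde\psi_\phi - \inf\tilde\psi_\phi$ appear cleanly in (\ref{0406}). The remaining ingredients are a rank-dimensional change of variables and two one-variable convexity inequalities, both of which are routine.
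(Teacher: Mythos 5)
Your proof is correct and is exactly the intended argument: the paper does not prove this lemma but defers to \cite[Lemma 4.8]{LZ}, whose proof is the same computation you give — factor $\mathbf J(x)=e^{4\rho(x)}\prod_{\alpha\in\Phi_+}\bigl(\tfrac{1-e^{-2\alpha(x)}}{2}\bigr)^2$, identify $u_\phi(4\rho)=-\inf_{\mathfrak a_+}\tilde\psi_\phi$ via $W$-invariance and dominance of $4\rho$, and then rescale with the change of variables $y=(1+c)x$ together with the inequalities $e^{-s/(1+c)}\ge e^{-s}$ and $1-e^{-t/(1+c)}\ge(1-e^{-t})/(1+c)$, using $r+2|\Phi_+|=n$. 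No gaps.
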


Let $\phi_0,\phi_1\in\mathcal E^1_{K\times K}(-K_M)$ and $u_0,u_1$ be two Legendre functions of $\psi_0+\phi_0$ and $\psi_0+\phi_1$, respectively.
Let $u_t$ $(t\in [0,1])$ be a linear path connecting $u_0$ to $u_1$ as in (\ref{linear-path}). Then by Theorem \ref{E1-legendre},
the corresponding Legendre functions  $\psi_t$ of $u_t$  give a path in $\mathcal E^1_{K\times K}(-K_M ,-K_M)$.  The following
lemma shows that $\mathcal F(\psi_t)$ is convex in $t$.

\begin{lemm}\label{F-convex} Let
$$\hat{\mathcal F}(t)\,=\,-\log\int_{\mathfrak a_+}e^{-\psi_t}\mathbf J(x)dx,~t\in [0,1].$$
Then $\hat{\mathcal F}(t)$ is convex in $t$ and so is $\mathcal F(\psi_t)$.
\end{lemm}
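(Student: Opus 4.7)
The plan is to establish joint convexity of $(t,x)\mapsto \psi_t(x)$ on $[0,1]\times\mathfrak{a}_+$ and then invoke Pr\'ekopa's marginal log-concavity theorem applied to the density $e^{-\psi_t(x)}\mathbf{J}(x)$.

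First, since $u_t=(1-t)u_0+tu_1$ is affine in $t$, for each fixed $y\in 2P$ the function $(t,x)\mapsto \langle x,y\rangle-u_t(y)=\langle x,y\rangle-(1-t)u_0(y)-tu_1(y)$ is affine in $(t,x)$. Taking the supremum over $y\in 2P$,
$$\psi_t(x)=\sup_{y\in 2P}\bigl\{\langle x,y\rangle-(1-t)u_0(y)-tu_1(y)\bigr\}$$
is a supremum of affine functions of $(t,x)$, hence jointly convex on $[0,1]\times\mathfrak{a}$. Next, the scalar function $\log\sinh$ is concave on $(0,\infty)$ (its second derivative is $-\operatorname{csch}^2<0$), and since $\alpha(x)>0$ on $\mathfrak{a}_+$ for every $\alpha\in\Phi_+$, the composition $-\log\sinh\alpha(x)$ is convex on $\mathfrak{a}_+$. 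Therefore
$$-\log\mathbf{J}(x)=-2\sum_{\alpha\in\Phi_+}\log\sinh\alpha(x)$$
is convex on the convex cone $\mathfrak{a}_+$, and $\Psi(t,x):=\psi_t(x)-\log\mathbf{J}(x)$ is jointly convex on the convex set $[0,1]\times\mathfrak{a}_+$.

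Applying Pr\'ekopa's theorem to $\Psi$ shows that the $t$-marginal
$$t\longmapsto \int_{\mathfrak{a}_+}e^{-\Psi(t,x)}\,dx=\int_{\mathfrak{a}_+}e^{-\psi_t(x)}\mathbf{J}(x)\,dx$$
is log-concave, which is exactly convexity of $\hat{\mathcal F}(t)$. The statement for $\mathcal F(\psi_t)$ follows immediately: under the Legendre correspondence $\mathcal F(u_t)=\hat{\mathcal F}(t)+u_t(4\rho)$, and $u_t(4\rho)=(1-t)u_0(4\rho)+tu_1(4\rho)$ is affine in $t$, so $\mathcal F(u_t)$ is the sum of a convex and an affine function of $t$.

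The main subtlety is making sure Pr\'ekopa's inequality genuinely applies here, since $\mathfrak{a}_+$ is an unbounded convex cone and the convex function $\Psi(t,x)$ may tend to $+\infty$ near $\partial\mathfrak{a}_+$. This is handled by the standard formulation of the Pr\'ekopa-Leindler inequality for lower-semicontinuous convex functions (with the convention $e^{-\infty}=0$), together with finiteness of the marginal integrals: by Theorem \ref{E1-legendre} each $u_t$ belongs to $\mathcal E^1_{K\times K}(2P)$ (the class is preserved under convex combinations since convexity, $W$-invariance and the normalization $u_t\ge u_t(O)=0$ all pass through affine interpolation), and the corresponding potential $\psi_t-\psi_{2P}\in\mathcal E^1_{K\times K}(M,-K_M)$ has full MA-mass, so $\int_{\mathfrak{a}_+}e^{-\psi_t}\mathbf{J}(x)\,dx<\infty$ for every $t\in[0,1]$.
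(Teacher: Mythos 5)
Your proof is correct and follows essentially the same route as the paper: the paper verifies the joint convexity of $(t,x)\mapsto\psi_t(x)$ (via the Legendre-transform supremum) and the concavity of $\log\mathbf J$ on $\mathfrak a_+$, then applies the Pr\'ekopa--Leindler inequality to the three functions $e^{-\psi_t}\mathbf J$, $e^{-\psi_1}\mathbf J$, $e^{-\psi_0}\mathbf J$, which is just the unpackaged form of the Pr\'ekopa marginal theorem you invoke. Your additional remarks on the unbounded domain and the finiteness of the integrals are a harmless (and reasonable) elaboration of points the paper leaves implicit.
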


\begin{proof} By definition,
we have
\begin{align}\label{ex-cov-01}
\psi_t(tx_1+(1-t)x_0)=&\sup_{y}\{\langle y,tx_1+(1-t)x_0\rangle-(tu_1(y)+(1-t)u_0(y))\}\notag\\
\leq&t\sup_{y}\{\langle y,x_1\rangle-u_1(y)\}\notag\\&+(1-t)\sup_{y}\{\langle y,x_0\rangle-u_0(y))\}\notag\\
\leq& t\psi_1(x_1)+(1-t)\psi_0(x_0),~\forall  ~x_0,x_1\in\mathfrak a.
\end{align}
On the other hand,
$$\log\mathbf J(tx_1+(1-t)x_0)\geq t\log\mathbf J(x_1)+(1-t)\log\mathbf J(x_0),~\forall x_0,x_1\in\mathfrak a_+.$$
Combining these two inequalities, we get
$$(e^{-\psi_t}\mathbf J)(tx_1+(1-t)x_0)\geq(e^{-\psi_1}\mathbf J)^t(x_1)(e^{-\psi_0}\mathbf J)^{1-t}(x_0),~\forall x_0,x_1\in\mathfrak a_+.$$
Hence, by applying the  Prekopa-Leindler inequality to three functions $e^{-\psi_t}\mathbf J, e^{-\psi_1}\mathbf J$
and $e^{-\psi_0}\mathbf J$ (cf. \cite{Timergaliev}), we prove
$$-\log\int_{\mathfrak a_+}e^{-\psi_t}\mathbf J(x)dx\,\le\, -t\log\int_{\mathfrak a_+}e^{-\psi_1}\mathbf J(x)dx-(1-t)\log\int_{\mathfrak a_+}e^{-\psi_0}\mathbf J(x)dx.$$
This means that $\hat{\mathcal F}(t)$ is convex.
\end{proof}

\begin{proof}[Proof of Proposition  \ref{ding-proper}]
By Proposition  \ref{ricci-potential},
\begin{eqnarray*}
A(y)\,=\,{\frac{V}{\int_{\mathfrak a_+}e^{-\psi_0}\mathbf J(x)dx}}e^{h_0(\nabla u_0(y))}
\end{eqnarray*}
is bounded, where $y(x)=\nabla\psi_0(x)$. Then the functional
\begin{eqnarray*}
\mathcal D_A(u)\,=\,\mathcal L^0_A(u)+\mathcal F(u),
\end{eqnarray*}
is well-defined  on   $\mathcal E^1_{K\times K}(2P)$, where
$$\mathcal L^0_A(u)\,=\,{\frac1V}\int_{2P_+}u A(y)\pi(y)\,dy-u(4\rho).$$
It is easy to see that  that $u_0$ is a critical point of $\mathcal D_A(\cdot)$.
On the other hand, by Lemma \ref{F-convex}, $\mathcal F(\cdot)$ is convex along any path in $\mathcal E^1_{K\times K}(M,-K_M)$ determined by their Legendre functions  as in (\ref{linear-path}).
Note that  $\mathcal L_A^0(\cdot)$ is convex in $\mathcal E^1_{K\times K}(2P)$.  Hence
\begin{eqnarray*}
\mathcal D_A(u)\,\geq\,\mathcal D_A(u_0),~\forall u\in\mathcal E^1_{K\times K}(2P).
\end{eqnarray*}
Now together with Lemma \ref{linear-proper-thm} and Lemma \ref{non-linear-lem},  we can apply arguments in the proof of \cite[Proposition 4.9]{LZ} to proving that  there is a constant $C>0$ such that for any $u\in\mathcal E^1_{K\times K}(2P)$,
\begin{align}
\mathcal D(u)\,\geq\,{\frac{ C\lambda}{1+C}}\int_{2P_+}u\pi(y)\,dy+\mathcal D_A(u_0)-n\log(1+C).\notag
\end{align}
Therefore, we get (\ref{proper-d}).

\end{proof}

\subsection{Semi-continuity}

Write ${\mathcal E}^1_{K\times K}(2P)$ as
$${\mathcal E}^1_{K\times K}(2P)\,=\,\bigcup_{\kappa\geq0}{\mathcal E}^1_{K\times K}(2P;\kappa),$$
where
$${\mathcal E}^1_{K\times K}(2P;\kappa)=\{u\in{\mathcal E}^1_{K\times K}(2P)|\int_{2P_+}u\pi\,dy\leq\kappa\}.$$
By \cite[Lemma 6.1]{LZZ} and Fatou's lemma, it is easy to see that any sequence $\{u_n\}\subseteq {\mathcal E}^1_{K\times K}(2P;\kappa)$ has a subsequence which converges locally uniformly to some $u_\infty$ in it. Thus each ${\mathcal E}^1_{K\times K}(2P;\kappa)$, and so ${\mathcal E}^1_{K\times K}(2P)$ is complete.  Moreover, we have

\begin{prop}\label{lsc-prop}
The reduced Ding functional $\mathcal D(\cdot)$ is lower semi-continuous on the space ${\mathcal E}^1_{K\times K}(2P)$. Namely, for any sequence $\{u_n\}\subseteq\hat{\mathcal E}^1_{K\times K}(2P)$, which converges locally uniformly to some $u_\infty$, we have $u_\infty\in {\mathcal E}^1_{K\times K}(2P)$ and it holds
\begin{align}\label{lsc}
\mathcal D(u_\infty)\,\leq\,\liminf_{n\to\infty}\mathcal D(u_n).
\end{align}
\end{prop}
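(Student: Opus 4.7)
My approach is to decompose $\mathcal D=\mathcal L+\mathcal F$ and prove lower semi-continuity of each piece separately under the local uniform convergence $u_n\to u_\infty$ on $\mathrm{Int}(2P)$, following the same template as in \cite{Berman-Berndtsson}. Since the $u(4\rho)$ terms in $\mathcal L$ and $\mathcal F$ cancel inside $\mathcal D$, the proof effectively splits into a Fatou argument for the clean linear piece $\frac{1}{V}\int_{2P_+}u\,\pi\,dy$ and a reverse-Fatou argument for the clean nonlinear piece $-\log\!\int_{\mathfrak a_+} e^{-\psi_u}\mathbf J\,dx$.

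\textbf{Linear piece and membership of $u_\infty$.} Convexity, $W$-invariance, and the normalization $u_\infty(O)=0$ transfer to the limit from the local uniform convergence. Applying Fatou's lemma to the non-negative sequence $u_n\pi$ on $2P_+$ yields
\[
\int_{2P_+} u_\infty\,\pi\,dy\,\leq\,\liminf_{n\to\infty}\int_{2P_+}u_n\,\pi\,dy,
\]
which both places $u_\infty$ in $\mathcal E^1_{K\times K}(2P)$ (at least inside any bounded-energy subclass $\mathcal E^1_{K\times K}(2P;\kappa)$) and is exactly the LSC of the linear piece. Continuity of convex functions at the interior point $4\rho\in\mathrm{Int}(2P)$ then provides $u_n(4\rho)\to u_\infty(4\rho)$, so $\mathcal L(u_\infty)\leq\liminf_n\mathcal L(u_n)$.

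\textbf{Nonlinear piece via reverse Fatou.} Since $-\log$ is decreasing, LSC of $u\mapsto-\log\!\int e^{-\psi_u}\mathbf J\,dx$ is equivalent to
\[
\limsup_{n\to\infty}\!\int_{\mathfrak a_+} e^{-\psi_{u_n}}\mathbf J\,dx\,\leq\,\int_{\mathfrak a_+} e^{-\psi_{u_\infty}}\mathbf J\,dx.
\]
For every $x\in\mathfrak a$ and every $y_0\in\mathrm{Int}(2P)$, the inequality $\psi_{u_n}(x)\geq \langle x,y_0\rangle-u_n(y_0)\to\langle x,y_0\rangle-u_\infty(y_0)$ gives, after taking the supremum over $y_0$, $\liminf_n\psi_{u_n}(x)\geq \psi_{u_\infty}(x)$, i.e., $\limsup_n e^{-\psi_{u_n}}(x)\leq e^{-\psi_{u_\infty}}(x)$ pointwise on $\mathfrak a$. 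To feed this into the reverse Fatou lemma I need an $n$-uniform integrable majorant: pick a compact $W$-invariant convex polytope $K\subset\mathrm{Int}(2P)$ with $4\rho\in\mathrm{Int}(K)$; local uniform convergence gives a uniform bound $u_n|_K\leq M_K$, so
\[
\psi_{u_n}(x)\,\geq\,v_K(x)-M_K,\qquad e^{-\psi_{u_n}(x)}\mathbf J(x)\,\leq\,e^{M_K}e^{-v_K(x)}\mathbf J(x),
\]
where $v_K$ is the support function of $K$. The strict interiority of $4\rho$ in $K$ forces $v_K(x)\geq 4\rho(x)+\delta_K|x|$ in $\mathfrak a_+$ for some $\delta_K>0$; combined with the asymptotic $\mathbf J(x)\leq C e^{4\rho(x)}$ the majorant decays as $e^{-\delta_K|x|}$ and is integrable. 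Reverse Fatou now delivers the desired upper semi-continuity of the integral and completes the LSC of $\mathcal F$.

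The decisive obstacle is constructing the dominating function in the nonlinear step: pointwise control on $e^{-\psi_{u_n}}$ is useless without an integrable majorant, and producing one requires matching the support function $v_K$ of an inner polytope against the leading exponential asymptotic of $\mathbf J$ along $\mathfrak a_+$. The $\mathbb Q$-Fano hypothesis ensures that $4\rho$ is strictly inside $2P$ so that such a $K$ exists, and the fineness of $P$ enters (via Proposition \ref{ricci-potential}) to guarantee that the reference integral $\int e^{-\psi_{u_{2P}}}\mathbf J\,dx$ is finite and $\mathcal D$ is sensibly defined on the whole class.
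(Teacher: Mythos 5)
Your proof is correct and follows essentially the same route as the paper: Fatou's lemma for the linear term, and reverse Fatou for $\mathcal F$ using the pointwise bound $\liminf_n\psi_{u_n}\geq\psi_{u_\infty}$ together with an integrable majorant $e^{M}e^{-v}\mathbf J$ built from a support function $v$ of an inner region containing $4\rho$ (the paper uses $(1-\epsilon_0)v_{2P}$ with $4\rho\in(1-\epsilon_0)\mathrm{Int}(2P)$, you use $v_K$ for a compact $K\ni 4\rho$; these are the same device). Your derivation of the $n$-uniform minorant directly from $u_n|_K\le M_K$ is if anything slightly cleaner than the paper's appeal to convergence of the subdifferentials.
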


\begin{proof}
By Fatou's lemma, we have
\begin{align}\label{13203}
\int_{2P_+}u_\infty\pi\,dy\,\leq\,\liminf_{n\to+\infty}\int_{2P_+}u_n\pi\,dy<+\infty.
\end{align}
Then $u_\infty\in {\mathcal E}^1_{K\times K}(2P)$ and
$$\mathcal L(u_\infty)\,\leq\,\liminf_{n\to+\infty}\mathcal L(u_n).$$
It remains to estimate $\mathcal F(u_\infty)$.  Note that $u_\infty$ is finite everywhere in Int$(2P)$  by the locally uniformly convergence and its Legendre function $\psi_\infty\le v_{2P}$.   Thus, for any $\epsilon_0\in(0,1)$ there is a constant $M_{\epsilon_0}>0$ such that
(cf. \cite[Lemma 2.3]{Coman-Guedj-Sahin-Zeriahi}),
\begin{align}\label{13201}
\psi_\infty(x)\,\geq\,(1-\epsilon_0)v_{2P}(x)-M_{\epsilon_0},\forall x\in\mathfrak a.
\end{align}

On the other hand, the Legendre function $\psi_n$ of $u_n$ also converges locally uniformly to $\psi_\infty$. Then
$$\partial\psi_n\to\partial\psi_\infty$$
almost everywhere. Since
$$\psi_n(O)\,=\,\psi_\infty(O)\,=\,0,\forall n\in\mathbb N_+,$$
we have
\begin{align}\label{13202}
\psi_n(x)\,\geq\,(1-\epsilon_0)v_{2P}(x)-M_{\epsilon_0},\forall x\in\mathfrak a
\end{align}
as long as $n\gg1$. Note that
$$0\,\leq\,\mathbf J(x)\,\leq\, e^{4\rho(x)},\forall x\in\mathfrak a_+.$$
By choosing an $\epsilon_0$ such that $4\rho\in(1-\epsilon_0)\text{Int}(2P)$, we get
$$\int_{\mathfrak a_+}e^{M_{\epsilon_0}-(1-\epsilon_0)v_{2P}(x)}\mathbf J(x)dx<+\infty.$$
Hence, combining this with \eqref{13201} and \eqref{13202} and using Fatou's lemma, we derive
\begin{align*}
-\log\left(\int_{\mathfrak a_+}e^{-\psi_\infty}\mathbf J(x)dx\right)\,\leq\,\liminf_{n\to+\infty}\left[-\log\left(\int_{\mathfrak a_+}e^{-\psi_n}\mathbf J(x)dx\right)\right].
\end{align*}
Therefore,  we have proved \eqref{lsc} by \eqref{13203}.
\end{proof}

\subsection{Proof of Theorem \ref{LTZ2}}
Now we prove the sufficient  part of Theorem \ref{LTZ2}.
Suppose that \eqref{bary} holds. Then by Theorem \ref{ding-proper} and Proposition \ref{lsc-prop}, there is a minimizing sequence $\{u_n\}$ of $\mathcal D(\cdot)$ on ${\mathcal E}^1_{K\times K}(2P)$, which converges locally uniformly to some $u_\star\in{\mathcal E}^1_{K\times K}(2P)$ such that
\begin{align}\label{minimizer-in-E}
\mathcal D(u_\star)\le \lim_{u\in{\mathcal E}^1_{K\times K}(2P)}\mathcal D(u).
\end{align}
 Let $\psi_\star$ be the  Legendre function of $u_\star$. Then  by Theorem \ref{E1-legendre},  we have
 $$\phi_\star=\psi_\star-\psi_0\in{\mathcal E}^1_{K\times K}(M,-K_M).$$
 We need to show that $\phi_\star$ satisfies the K\"ahler-Einstein equation (\ref{singular-ke-equation}).

 \begin{prop}\label{minimizer-in-E-prop}
$\phi_\star$  satisfies the K\"ahler-Einstein equation \eqref{singular-ke-equation}.
\end{prop}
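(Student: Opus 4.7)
The plan is to derive the Euler-Lagrange equation satisfied by the minimizer $u_\star$ and to translate it into the Monge-Amp\`ere equation \eqref{singular-ke-equation}, following the variational approach of Berman-Berndtsson \cite{Berman-Berndtsson} for toric $\mathbb Q$-Fano varieties. Since $\mathcal D=\mathcal L+\mathcal F$ is convex (by linearity of $\mathcal L$ and Lemma \ref{F-convex}), the minimality of $u_\star$ yields a first-order vanishing condition along admissible directions. I will take variations $u_t=u_\star+tv$ for $W$-invariant bounded continuous test functions $v$ on $2P$ such that $u_t\in\mathcal E^1_{K\times K}(2P)$ on a neighborhood of $t=0$, and extend the resulting identity to a class rich enough to determine the measure on $G$.

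The linear term contributes $\tfrac{d}{dt}\mathcal L(u_t)|_{t=0}=\tfrac{1}{V}\int_{2P_+}v\pi\,dy-v(4\rho)$ directly. For the nonlinear term, I invoke Lemma \ref{derivative-dual}(2) with the roles of $u$ and $\psi$ swapped (legitimate by involutivity of the Legendre transform) to obtain $\tfrac{d}{dt}\psi_{u_t}(x)|_{t=0}=-v(\nabla\psi_\star(x))$ at a.e.\ $x\in\mathfrak a$, and I use the upper bound on the Ricci potential $h_0$ from Proposition \ref{ricci-potential}, equivalently the integrability of $e^{-\psi_\star}\mathbf J\,dx$ on $\mathfrak a_+$, to justify differentiation under the integral by dominated convergence. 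Setting the total derivative to zero produces the Euler-Lagrange identity
$$\frac{1}{V}\int_{2P_+}v(y)\pi(y)\,dy=\frac{\int_{\mathfrak a_+}v(\nabla\psi_\star(x))\,e^{-\psi_\star(x)}\mathbf J(x)\,dx}{\int_{\mathfrak a_+}e^{-\psi_\star}\mathbf J\,dx}.$$
To convert this into \eqref{singular-ke-equation}, by Remark \ref{integral-formula} the left-hand side equals $\tfrac{1}{V}\int_M\tilde v\,\omega_{\phi_\star}^n$, where $\tilde v$ is the $K\times K$-invariant extension of $v\circ\nabla\psi_\star$; on the right-hand side, formula \eqref{MA}, Proposition \ref{KAK int}, and the identity $h_0=-\log\det(\partial\bar\partial\psi_0)-\psi_0$ for the Ricci potential of the Guillemin metric $\omega_0=\omega_{2P}$ show that $e^{-\psi_\star}\mathbf J\,dx$ coincides, up to a positive multiplicative constant, with the density on $G$ of the measure $e^{h_0-\phi_\star}\omega_0^n$. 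Hence $\omega_{\phi_\star}^n=c\,e^{h_0-\phi_\star}\omega_0^n$ on $G$ for some $c>0$; both sides have full mass on $G\subset M_{\mathrm{reg}}$ (the LHS by Theorem \ref{E1-legendre}, the RHS by klt-ness and the integrability from Proposition \ref{ricci-potential}), so the identity extends as measures on $M$. Finally, $\mathcal D$ is invariant under $u\mapsto u+\mathrm{const.}$, which allows absorbing $c$ by an additive shift in $\phi_\star$, giving \eqref{singular-ke-equation}; the $C^\infty$ regularity on $M_{\mathrm{reg}}$ then follows from the results of \cite{BBEGZ}.

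The main obstacle I anticipate is upgrading the variational inequality (immediate from convexity of $\mathcal D$) to the two-sided Euler-Lagrange \emph{equality} for a class of $v$ sufficient to determine the measure. The difficulty is that $u_\star-tv$ need not stay convex, so $u_t$ may leave $\mathcal E^1_{K\times K}(2P)$ for $t<0$. I plan to restrict initially to smooth $W$-invariant $v$ with compact support in a set $Q\Subset\mathrm{Int}(2P_+)$, where $u_\star$ is locally bounded by Theorem \ref{E1-legendre}; here $u_\star\pm tv$ can be dominated above and below by suitable $W$-invariant convex competitors in $\mathcal E^1_{K\times K}(2P)$, and applying the one-sided inequality to both dominators and passing to the limit collapses to the equality. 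Density of such $v$ in the space of bounded $W$-invariant continuous functions then extends the Euler-Lagrange identity to the full test class. A secondary technicality, namely making the change of variables $y=\nabla\psi_\star(x)$ rigorous when $\psi_\star$ is only a locally uniform limit of smooth potentials, is handled through the weighted Monge-Amp\`ere formalism of Section 4 and the approximation in Lemma \ref{approximate-lemma}.
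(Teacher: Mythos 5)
Your strategy is the paper's variational one, but dualized: you perturb the minimizer on the polytope side, $u_\star\mapsto u_\star+tv$ with $v$ a test function on $2P$, whereas the paper perturbs on the group side, $\psi_\star\mapsto\psi_\star+t\eta$ with $\eta\in C_0(\mathfrak a)$, restoring admissibility via the envelope $\hat\psi_t=\sup\{\psi_\phi\le\psi_\star+t\eta\}$, whose Legendre transform agrees with that of $\psi_\star+t\eta$. The choice of side is not cosmetic, and it is where your argument has a genuine gap. Your Euler--Lagrange identity only tests the two candidate measures against functions of the form $v\circ\nabla\psi_\star$. Since $\psi_\star$ is a priori only convex, it may be affine on an open set $U\subseteq\mathfrak a_+$, on which every such test function is constant (indeed $v\circ\nabla\psi_\star$ need not even be well defined where $\psi_\star$ has kinks, i.e.\ where $u_\star$ has flat pieces). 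This class therefore does not separate measures on $G$: what you actually obtain is the pushforward identity $(\nabla\psi_\star)_*\bigl(c\,e^{-\psi_\star}\mathbf J\,dx\bigr)=\tfrac1V\pi\,dy$ on $2P_+$, which is in general strictly weaker than $\text{MA}_{\mathbb R;\pi}(\psi_\star)=cV e^{-\psi_\star}\mathbf J\,dx$. So the ``Hence $\omega_{\phi_\star}^n=c\,e^{h_0-\phi_\star}\omega_0^n$ on $G$'' does not follow as written. The gap is repairable: since $(\nabla\psi_\star)^{-1}(\partial\psi_\star(\Omega'))\supseteq\Omega'$ up to a Lebesgue-null set, the pushforward identity forces $\tfrac1V\text{MA}_{\mathbb R;\pi}(\psi_\star)\ge c\,e^{-\psi_\star}\mathbf J\,dx$ as measures on $\mathfrak a_+$, and both sides have the same total mass by Lemma \ref{energy-preserve}, whence equality; but this step must be supplied. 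The paper avoids the issue entirely by testing against arbitrary $\eta\in C_0(\mathfrak a)$, which identifies the measures on $G$ directly.

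The second soft spot is the two-sided variation. For a merely convex $u_\star$, the path $u_\star+tv$ can leave $\mathcal E^1_{K\times K}(2P)$ for \emph{every} $t\ne0$ of both signs, and ``applying the one-sided inequality to dominators above and below'' does not by itself produce $\tfrac{d}{dt}\big|_{t=0}\mathcal D=0$: the dominators carry no information about $\mathcal D(u_\star+tv)$. The correct mechanism, dual to the paper's envelope trick, is to pass to the $W$-invariant convex envelope $Q_t$ of $u_\star+tv$: one has $\psi_{Q_t}=\psi_{u_\star+tv}$ (Legendre transforms ignore convexification) while $\int Q_t\pi\,dy\le\int(u_\star+tv)\pi\,dy$, so $\mathcal D(u_\star)\le\mathcal D(Q_t)\le\tfrac1V\int_{2P_+}(u_\star+tv)\pi\,dy-\log\int_{\mathfrak a_+}e^{-\psi_{u_\star+tv}}\mathbf J\,dx$, and the right-hand side is differentiable at $t=0$ with exactly the derivative you computed, giving the vanishing. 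With these two repairs your route closes and is a legitimate alternative to the paper's; as written, both steps are asserted rather than proved.
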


\begin{proof}
Let  $\{u_t\}_{t\in[0,1]}\subseteq{\mathcal E}^1_{K\times K}(2P)$ be a family convex functions with $u_0=u_\star$ and $\psi_t$ the corresponding Legendre functions   of $u_t$.  Then
by  Part (2) in  Lemma \ref{derivative-dual},
 $$\dot\psi_0\,=\,-\dot u_0,~{\rm almost ~everywhere}.$$
 Note that
 $$\int_{\mathfrak a_+}e^{-\psi_\star}\mathbf J(x)dx=V, $$
 Thus by  \eqref{full-mass} in Lemma \ref{energy-preserve}, we get
\begin{align}\label{variation-1}
\left.\frac d{dt}\right|_{t=0}\mathcal D(u_t)&=\frac1V\int_{2P_+}\dot u_0\pi\,dy+\frac{\int_{\mathfrak a_+}\dot\psi_0e^{-\psi_\star}\mathbf J(x)dx}{V}\notag\\
&=\frac1V\int_{\mathfrak a_+}\dot \psi_0[ e^{-\psi_\star}\mathbf J(x) -\text{MA}_{\mathbb R;\pi}(\psi_\star)] dx.
\end{align}

For any continuous, compactly supported  $W$-invariant   function $\eta\in C_0({\mathfrak a})$,  we consider a  family of functions $u_\star+t\eta$.  In general,  it may not be convex for $t\not=0$ since $u_\star$ is  just  weakly convex.  In the following, we use a trick to modify the function  $\mathcal D(u_t)$  as in \cite[Section 2.6]{Berman-Berndtsson}.
Define a family of  $W$-invariant functions  by
$$\hat\psi_t\,=\,\sup_{\phi\in\mathcal E^1_{K\times K}(M,-K_M)}\{\psi_\phi|\psi_\phi\leq\psi_\star+t\eta\}.$$
Then it is easy to see that the Legendre function $\hat u_t$ of $\hat\psi_t$ satisfies
$$|\hat u_t- u_{0}|\,\le\, C, ~\forall |t|\ll1.$$
By Theorem \ref{E1-legendre},
 we see that $(\hat\psi_t-\psi_0)\in\mathcal E^1_{K\times K}(M,-K_M)$.
Without loss of generality, we may assume that $\hat\psi_t$ satisfies (\ref{normalization}).

Let
$$\tilde{\mathcal D}(t)=\mathcal L(\hat u_t)+\mathcal F(\hat u_t).$$
Then
 \begin{align}\label{5120}
\tilde {\mathcal D}(0)\,=\,\mathcal D(u_\star)
\end{align}
and
\begin{align}\label{5119}
\tilde{\mathcal D}(t)
\geq\mathcal D(u_\star).
\end{align}

\begin{claim}\label{derivative}${\mathcal L}(\hat u_t)+\hat u_t(4\rho)$ is differentiable for $t$. Moreover,
\begin{align}\label{variation-1}\left.\frac{d}{dt}\right|_{t=0}( {\mathcal L}(\hat u_t)+\hat u_t(4\rho))
=-\frac1V \int_{M}\eta \omega_{\phi_\star}^n.\end{align}
\end{claim}

To prove this claim, we let a convex function $g(t)=\hat u_t(p)$ for  each fixed $y\in2P$.  Then it  has left and right derivatives $g'_-(t;p),g'_+(t;p)$, respectively.  Moreover, they are monotone and $g'_-(t;p)\leq g'_+(t;p)$. Thus,  $g'_-,g'_+\in L^\infty_{\text{loc}}$.  It follows that
$$\left.\frac d{dt}\right|_{t=\tau^{\pm}}\int_{2P_+}\hat u_{\tau}\pi dy=\lim_{\tau'\to0^\pm}\frac1{\tau'}\int_{2P_+}(\hat u_{\tau+\tau'}-\hat u_\tau)\pi dy$$
  and by the Lebesgue monotone convergence theorem,
$$\left.\frac d{dt}\right|_{t=\tau^{\pm}}\int_{2P_+}\hat u_{\tau}\pi dy\,=\,\int_{2P_+}g'_\pm(\tau;p)\pi dy.$$
Recall that $g'_-(t;p)=g'_+(t;p)$ holds almost everywhere. Thus we see that
$$\mathcal L(\hat u_t)+u(4\rho)=\frac1V\int_{2P_+}\hat u_t\pi dy$$
is differentiable.

Note that
$$u_{\hat\psi_t}\,=\,u_{\psi_\star+t\eta}, $$
where  $u_{\psi_\star+t\eta}$ is the  Legendre function of   ${\psi_\star+t\eta}$.  It follows from Part (2) in  Lemma \ref{derivative-dual} that
$$\dot{\hat\psi}_0\,=\,-\dot u_0=\eta,~{\rm almost ~everywhere}. $$
Hence by   Lemma \ref{approximate-lemma} (or Remark \ref{integral-formula}),   we get
\begin{align}
  \left.\frac{d}{dt}\right|_{t=0}( {\mathcal L}(\hat u_t)+\hat u_t(4\rho))
&=\frac1V\int_{2P_+}\dot{\hat u}_0\pi dy\notag\\
&=-\frac1V\int_{2P}\eta \pi dy=-\frac1V\int_{{\mathfrak a_+}} \eta \text{MA}_{\mathbb R;\pi}(\psi_0)dx  \notag\\
&=-\frac1V\int_{M}\eta \omega_{\phi_\star}^n,\notag
\end{align}
where $\phi_\star=\psi_*-\psi_0$.
The claim is proved.

Similar  to Claim \ref{derivative}, we have
\begin{align}\label{variation-2}\left.\frac{d}{dt}\right|_{t=0}(\mathcal F(\hat u_t)-\hat u_t(4\rho))&=\frac1V\int_{\mathfrak a_+}\eta e^{-\psi_\star}\mathbf J(x)dx\notag\\
&=\int_G \eta e^{-\phi_\star+h_0}\omega_0^n.
\end{align}
Thus, by \eqref{5120}-\eqref{variation-2}, we derive
\begin{align}
0\,=\,\left.\frac d{dt}\right|_{t=0}\tilde{\mathcal D}(t)&=\frac1V\int_{G}\eta[ e^{-\phi_\star+h_0}\omega_0^n - \omega_{\phi_\star}^n ] dx.\label{vari-psi-star}
\end{align}
As a consequence,
 $$ \omega_{\phi_*}^n \,=\,e^{-\psi_\star+h}\omega_0^n,~{\rm in}~G.$$
Therefore, by Lemma \ref{approximate-lemma} and $KAK$-integration formula,
we prove  that $\phi_\star$  satisfies (\ref{singular-ke-equation}) on $G$.

Next we show  that $\omega_{\phi_\star}$ can be extended as a singular K\"ahler-Einstein metric on $M$.
Choose an $\epsilon_0$ such that $4\rho\in\text{Int}(2(1-\epsilon)P)$. Since $u_\star\in{\mathcal E}^1_{K\times K}(2P)$,
by Lemma \ref{energy-preserve}, there is a constant $C_\star>0$ such that
$$\psi_\star\,\geq\,(1-\epsilon_0)v_{2P}-C_\star.$$
Thus
$$e^{-\psi_\star(x)}\mathbf J(x)$$
is bounded on $\mathfrak a_+$.
Also $\pi(\partial\psi_\star)$ is bounded. Therefore, by  \eqref{full-mass}, for any $\epsilon>0$, we can find a neighborhood $U_\epsilon$ of $M\setminus G$ such that
$$\left|\int_{U_\epsilon}(\omega_{\phi_\star}^n-e^{h_0-\psi_\star}\omega_0^n)\right|\,<\,\epsilon.$$
This implies that  $\phi_\star$ can be extended to be  a global solution of  (\ref{singular-ke-equation}) on $M$. The proposition is proved.
\end{proof}

\section{$\mathbb Q$-Fano compactification of ${SO}_4(\mathbb C)$}

In this section, we will construct $\mathbb Q$-Fano compactifications of ${SO}_4(\mathbb C)$ as examples and in particular, we will prove  Theorem \ref{LTZ3}. Note that in this case rank$(G)=2$.  Thus we can use Theorem \ref{LTZ2} to verify  whether there exists   a  K\"ahler-Einstein metric on a $\mathbb Q$-Fano ${SO}_4(\mathbb C)$-compactification by computing the barycenter of their moment polytopes $P_+$. For convenience, we will work with $P_+$ instead of $2P_+$ throughout this section.   Then it is easy to see that the existence criterion \eqref{bary} is equivalent to

\begin{align}\label{bary-p}
bar(P_+)\in 2\rho+\Xi.
\end{align}

Denote
$$R(t)\,=\,\left(\begin{aligned}&\cos t&-\sin t\\&\sin t&\cos t\end{aligned}\right).$$
Consider the canonical embedding of $SO_4(\mathbb C)$ into $GL_4(\mathbb C)$ and choose the maximal torus
$$T^\mathbb C\,=\,\left\{\left(\begin{aligned}&R(z^1)&O\\&O&R(z^2)\end{aligned}\right)|z^1,z^2\in\mathbb C\right\}.$$
Choose the basis of $\mathfrak N$ as $E_1,E_2$, which generates  the $R(z^1)$ and $R(z^2)$-action. Then we have two  positive roots in $\mathfrak M$,
$$\alpha_1=(1,-1),\alpha_2=(1,1).$$
Also  we have
$$\mathfrak a^*_+=\{(x,y)|-x\leq y\leq x\}, ,~2\rho=(2,0)$$
and
\begin{align}\label{xi}
2\rho+\Xi=\{(x,y)|-x+2\leq y\leq x-2\}.
\end{align}

\subsection{Gorenstein Fano $SO_4(\mathbb C)$-compactifications}\label{sect-6-1}

In this subsection, we  use Lemma \ref{polytope-coefficient} to exhaust all polytopes associated to Gorenstein Fano compactifications. Here by Gorenstein, we mean
that $K_{M_{reg}}^{-1}$ can be extended as a holomorphic vector line bundle on $M$. In this case, the whole polytope $P$ is a lattice polytope. Also, since $2\rho=(2,0)$, each outer edge \footnote {An edge of $P_+$ is called an outer one if it does not lie in any Weyl wall, cf. \cite{LZZ}.} of $P_+$ must lies on some line
\begin{align}\label{line-pq}l_{p,q}(x,y)\,=\,(1+2p)-(px+qy)\,=\,0
\end{align}
for some coprime pair $(p,q)$. Assume that $l_{p,q}\geq 0$ on $P$. By convexity and $W$-invariance of $P$, $(p,q)$ must satisfy $$p\geq |q|\geq0.$$

Let us start at the outer edge $F_1$ of $P_+$ which intersects the Weyl wall $$W_1=\{x-y=0\}.$$
There are two cases: \emph{Case-1.} $F_1$ is orthogonal to $W_1$; \emph{Case-2.} $F_1$ is not orthogonal to $W_1$.

\emph{Case-1.}   $F_1$ is orthogonal to $W_1$. Then $F_1$  lies on
$$\{(x,y)|~l_{1,1}(x,y)=3-x-y=0\}.$$
Consider the vertex $A_1=(x_1,3-x_1)$ of $P_+$ on this edge and suppose that the other edge $F_2$ at this point lies on
$$\{(x,y)|~l_{p_2,q_2}(x,y)=0\}.$$
Thus
\begin{align}\label{F-2-1}2p_2+1=x_1p_2+(3-x_1)q_2,\end{align}
and by convexity of $P$,
  $$p_2>q_2\geq0.$$

We will have two subcases according to the possible choices $A_1=(2,1)$ or $(3,0)$.

\emph{Case-1.1.}  $A_1=(2,1)$.  Then by (\ref{F-2-1}),
   $$2p_2+1=2p_2+q_2.$$
Thus $q_2=1$ and $p_2\geq2$.

On the other hand, $l_{p_2,q_2}$ must pass another lattice point $A_2=(x_2,y_2)$ as the other endpoint of $F_2$. It is direct to see that there are only two possible choices $p_2=2,4$ and three choices of $A_2=(5,-5)$, $(3,-1)$ and $(3,-3)$.

\emph{Case-1.1.1.} If $A_2=(5,-5)$ which lies on the other Weyl wall $W_2=\{x+y=0\}$. There can not be any other outer edges of $P_+$, and $P_+$ is given by Figure (7-1-1).
\begin{figure}[h]
\begin{center}
\begin{tikzpicture}
\draw [dotted] (0,-5) grid[xstep=1,ystep=1] (5,2);
\draw (0,0) node{$\bullet$};
\draw (2,0) node{$\bullet$};
\draw (1.6,0.3) node{$2\rho$};
\draw [semithick] (0,0) --(1.5,1.5) -- (2,1) -- (5,-5) -- (0,0);
\draw (2.2,1.2) node{$A_1$};
\draw (5.2,-4.8) node{$A_2$};
\draw [very thick, -latex] (0,0) -- (1,-1);
\draw [very thick, -latex] (0,0) -- (1,1);
\draw (0.2,2.2) node{(7-1-1)};
\end{tikzpicture}
\begin{tikzpicture}
\draw [dotted] (0,-3) grid[xstep=1,ystep=1] (3,2);
\draw (0,0) node{$\bullet$};
\draw (2,0) node{$\bullet$};
\draw (1.6,0.3) node{$2\rho$};
\draw [semithick] (0,0) -- (1.5,1.5) -- (2,1) -- (3,-1) -- (3,-3) -- (0,0);
\draw (2.2,1.2) node{$A_1$};
\draw (3.2,-0.8) node{$A_2$};
\draw (3.2,-2.8) node{$A_3$};
\draw [very thick, -latex] (0,0) -- (1,-1);
\draw [very thick, -latex] (0,0) -- (1,1);
\draw (0.2,2.2) node{(7-1-2)};
\end{tikzpicture}
\end{center}
\end{figure}
By Theorem \ref{LTZ2} (or equivalently \eqref{bary-p}), this compactification admits no K\"ahler-Einstein metric.

\emph{Case-1.1.2.} $A_2=(3,-1)$. Then we exhaust the third edge $F_3$ which lies on $$l_{p_3,q_3}=2p_3+1-p_3x-q_3y,$$
so that
\begin{align*}
2p_3+1&=3p_3-q_3,\\
p_3&>2q_3\geq0.
\end{align*}
Hence the only possible choice is $p_3=1, q_3=0$ and the other endpoint of $F_3$ is $A_3=(0,-3)$.
Then $P_+$ is given by Figure (7-1-2).
Again, this compactification admits no K\"ahler-Einstein metric.

\emph{Case-1.1.3.} If $A_2=(3,-3)$ which lies on the other Weyl wall $W_2=\{x+y=0\}$. There can not be any other outer edges of $P_+$, and $P_+$ is given by Figure (7-1-3).
By Theorem \ref{LTZ2}, this compactification admits no K\"ahler-Einstein metric.

\emph{Case-1.2.}  $A_1=(3,0)$.  By the same exhausting progress as in \emph{Case-1.1.} There  are two possible polytopes $P_+$,  \emph{Case-1.2.1} and \emph{Case-1.2.2} (see Figure (7-1-4) and Figure (7-1-5)).

\begin{figure}[h]
\begin{center}
\begin{tikzpicture}
\draw [dotted] (0,-3) grid[xstep=1,ystep=1] (3,2);
\draw (0,0) node{$\bullet$};
\draw (2,0) node{$\bullet$};
\draw (1.6,0.3) node{$2\rho$};
\draw [semithick] (0,0) --(1.5,1.5) -- (2,1) -- (3,-3) -- (0,0);
\draw (2.2,1.2) node{$A_1$};
\draw (3.2,-2.8) node{$A_2$};
\draw [very thick, -latex] (0,0) -- (1,-1);
\draw [very thick, -latex] (0,0) -- (1,1);
\draw (0.2,2.2) node{(7-1-3)};
\end{tikzpicture}
\begin{tikzpicture}
\draw [dotted] (0,-3) grid[xstep=1,ystep=1] (3,2);
\draw (0,0) node{$\bullet$};
\draw (2,0) node{$\bullet$};
\draw (1.6,0.3) node{$2\rho$};
\draw [semithick] (0,0) -- (1.5,1.5) -- (3,0) -- (3,-3) -- (0,0);
\draw (1.7,1.8) node{$A_1$};
\draw (3.2,0.2) node{$A_2$};
\draw (3.2,-2.8) node{$A_3$};
\draw [very thick, -latex] (0,0) -- (1,-1);
\draw [very thick, -latex] (0,0) -- (1,1);
\draw (0.2,2.2) node{(7-1-4)};
\end{tikzpicture}
\end{center}
\end{figure}

 \emph{Case-1.2.1.}  This compactification admits no K\"ahler-Einstein metric.

\emph{Case-1.2.2.}
This compactification admits a K\"ahler-Einstein metric.

\emph{Case-2.}   $F_1$ is not orthogonal to $W_1$.  Then its intersection $A_1=(x_1,x_1)$ with $W_1$ is a vertex of $P$.
We see that $F_1$ lies on $l_{p_1,q_1}$ and
\begin{align*}
2p_1&=(p_1+q_1)x_1,\\
p_1&>q_1\geq0,\\
x_1&=2+\frac{1-2q_1}{p_1+q_1}\in\mathbb N_+.
\end{align*}
So the only choice is
$$p_1=1,q_1=0$$
and $A_1=(3,1)$. The only new polytope $P_+$ is given by Figure (7-1-6),
\begin{figure}[h]
\begin{center}
\begin{tikzpicture}
\draw [dotted] (0,-2) grid[xstep=1,ystep=1] (3,2);
\draw (0,0) node{$\bullet$};
\draw (2,0) node{$\bullet$};
\draw (1.6,0.3) node{$2\rho$};
\draw [semithick] (0,0) -- (1.5,1.5) -- (3,0) -- (1.5,-1.5) -- (0,0);
\draw (1.7,1.8) node{$A_1$};
\draw (3.2,0.2) node{$A_2$};
\draw (1.7,-1.8) node{$A_3$};
\draw [very thick, -latex] (0,0) -- (1,-1);
\draw [very thick, -latex] (0,0) -- (1,1);
\draw (0.2,2.2) node{(7-1-5)};
\end{tikzpicture}
\begin{tikzpicture}
\draw [dotted] (0,-3) grid[xstep=1,ystep=1] (3,3);
\draw (0,0) node{$\bullet$};
\draw (2,0) node{$\bullet$};
\draw (1.6,0.3) node{$2\rho$};
\draw [semithick] (3,3) -- (0,0) -- (3,-3) -- (3,3);
\draw (3.2,2.8) node{$A_1$};
\draw (3.2,-2.8) node{$A_2$};
\draw [very thick, -latex] (0,0) -- (1,-1);
\draw [very thick, -latex] (0,0) -- (1,1);
\draw (0.2,2.2) node{(7-1-6)};
\end{tikzpicture}
\end{center}
\end{figure}
which admits K\"ahler-Einstein metric.

It is known that \emph{Case-1.1.2}, \emph{Case-1.2.1} and \emph{Case-2} are the only smooth $SO_4(\mathbb C)$-compactifications as shown in \cite{LTZ}. We summarize results of this subsection in Table-1.

\begin{table}[t]
\begin{tabular}{|l|l|l|l|l|}
\hline
{\rm No.} & Edges, except Weyl walls & Volume                               & KE? & Smoothness \\ \hline
(7-1-1)   & 3-x-y=0; 5-2x-y=0        & $\frac{411}4$         & No  & Singular   \\ \hline
(7-1-2)   & 3-x-y=0; 5-2x-y=0; 3-x=0 & $\frac{10751}{180}$ & No  & Smooth     \\ \hline
(7-1-3)   & 3-x-y=0; 9-4x-y=0        & $\frac{16349}{972}$ & No  & Singular   \\ \hline
(7-1-4)   & 3-x-y=0; 3-x=0           & $\frac{1701}{20}$   & No  & Smooth     \\ \hline
(7-1-5)   & 3-x-y=0; 3-x+y=0         & $\frac{81}2$          & Yes & Singular   \\ \hline
(7-1-6)   & 3-x=0                    & $\frac{648}5$         & Yes & Smooth     \\ \hline
\end{tabular}
\caption{Gorenstein Fano $SO_4(\mathbb C)$-compactifications.}
\end{table}

\subsection{$\mathbb Q$-Fano $SO_4(\mathbb C)$-compactifications}\label{sect-6-2}

In general, for a fixed  integer $m>0$, it may be hard to give a classification  of  all  $\mathbb Q$-Fano compactifications such that $-mK_X$ is Cartier. This is because when $m$ is sufficiently divisible, there will be too many repeated polytopes directly using Lemma \ref{polytope-coefficient}.
To avoid this problem, we give a way to exhaust all $\mathbb Q$-Fano polytopes according to the intersection point of $\partial P_+$ with $x$-axis.

We will adopt the notations from the previous subsection.  We consider the intersection of $P_+$ with the positive part of the $x$-axis, namely $(x_0,0)$. Then
$$x_0\,=\,2+\frac1{p_0}$$ for some $p_0\in\mathbb N_+$, and there is an edge which lies on some $\{l_{p_0,q_0}=0\}$. Without loss of generality, we may   also assume that $\{l_{p_0,q_0}=0\}\cap\{y>0\}\not=\emptyset$.  Thus by symmetry, it suffices to consider the case
\begin{align*}p_0\,\geq\, q_0\,\geq\,0.\end{align*}
Indeed, by the prime condition, $q_0\not=0,\pm p_0$ if $p_0\not=1$.  Hence,  we may assume
\begin{align}\label{p0q0}
p_0\,>\, q_0\,>\,0,p_0\,\geq\,2.
\end{align}

We associate this number $p_0$ to each $\mathbb Q$-Fano polytope $P$ (and hence $\mathbb Q$-Fano compactifications of $SO_4(\mathbb C)$). By the convexity, other edges determined by  $l_{p,q}$ must satisfy (see the figure below)
$$p\,\leq\, p_0,$$
since we assume that \begin{align}\label{triangle-bound}P_+\subseteq(\{l_{p_0,q_0}\geq0\}\cap\mathfrak a_+).\end{align}
\begin{figure}[h]
\begin{center}
  \includegraphics[width=1.5in]{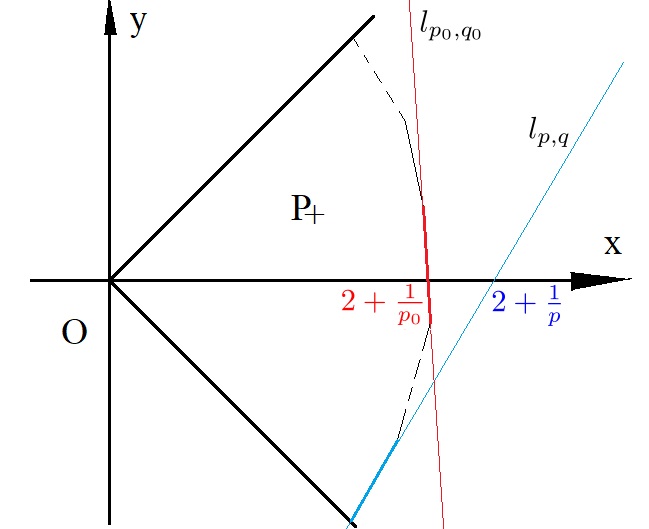}\\
\end{center}
\end{figure}
Thus, once $p_0$ is fixed, there are only finitely possible $\mathbb Q$-Fano compactifications of $SO_4(\mathbb C)$ associated to it. In the following table, we list all possible $\mathbb Q$-Fano compactifications with $p_0\leq2$. We also test the existence of K\"ahler-Einstein metrics on these compactifications.
In the appendix we list the nine non-smooth examples above labeled as in Table-2.

\begin{table}[t]
\begin{tabular}{|c|c|c|c|c|c|}
\hline
{\rm No.} & $p_0$      & $(p,q)$ of edges, except Weyl walls & Volume                 & KE? & Smoothness/Multiple \\ \hline
(1)       & \textbf{1} & $(1,0)$                             & $\frac{648}{5}$       & Yes & Smooth              \\ \hline
(2)       &            & $(1,0),(1,1)$                       & $\frac{1701}{20} $    & No  & Smooth              \\ \hline
(3)       &            & $(1,-1),(1,1)$                      & $\frac{81}{2}   $     & Yes & Multiple=1          \\ \hline
(4)       & \textbf{2} & $(2,1)$                             & $\frac{25000}{243}$   & No  & Multiple=3          \\ \hline
(5)       &            & $(2,1),(1,1)$                       & $\frac{411}{4}   $    & No  & Multiple=1          \\ \hline
(6)       &            & $(1,0),(2,1)$                       & $\frac{72728}{1215}$  & No  & Multiple=3          \\ \hline
(7)       &            & $(2,1),(1,-1)$                      & $\frac{947}{36} $     & No  & Multiple=3          \\ \hline
(8)       &            & $(2,-1),(2,1)$                      & $\frac{165625}{7776}$ & No  & Multiple=6          \\ \hline
(9)       &            & $(2,1),(1,0),(1,1)$                 & $\frac{10751}{180}$   & No  & Smooth              \\ \hline
(10)      &            & $(2,1),(1,-1),(1,1)$                & $\frac{12721}{486}$   & No  & Multiple=1          \\ \hline
(11)      &            & $(2,1),(2,-1),(1,1)$                & $\frac{164609}{7776}$ & No  & Multiple=6          \\ \hline
(12)      &            & $(2,1),(2,-1),(1,1),(1,-1)$         & $\frac{6059}{288}$    & No  & Multiple=6          \\ \hline
\end{tabular}
\caption{$\mathbb Q$-Fano $SO_4(\mathbb C)$-compactifications of cases $p_0\leq2$.}
\end{table}

\subsection{Proof of Theorem  \ref{LTZ3}}

\begin{proof}
We introduce some notations for convenience:  For any domain $\Omega\subset\overline{\mathfrak a_+^*}$,   define
$$\begin{aligned}
\text{Vol}(\Omega)&:=\int_\Omega\pi dx\wedge dy,\\\bar x(\Omega)&:=\frac1{V(\Omega)}\int_\Omega x\pi dx\wedge dy,\\\bar y(\Omega)&:=\frac1{V(\Omega)}\int_\Omega y\pi dx\wedge dy,\end{aligned}$$
and
 $$\bar c(\Omega):=\bar x+\bar y.$$
 By Theorem  \ref{LTZ2}  and \eqref{xi},   we have $\bar c(P_+)\geq2$ whenever the $\mathbb Q$-Fano compactification  of  $SO_4(\mathbb C)$ admits a  K\"ahler-Einstein metric.

Recall the number $p_0, q_0$ introduced in Section \ref{sect-6-2}. By \eqref{p0q0}, it is direct to see that for any $t\geq0$ such that $P_+$ intersects with $\{y=x-2t\}$,
\begin{align}\label{bar-P}
\bar c(P_+)&\leq\bar c(P_+\cap\{y\geq x-2t\})\notag\\
&\leq\bar c(\{l_{p_0,q_0}\geq0,0\leq x-y\leq 2t,y\geq-x\}).
\end{align}

\begin{figure}[h]
\begin{center}
  \includegraphics[width=1.5in]{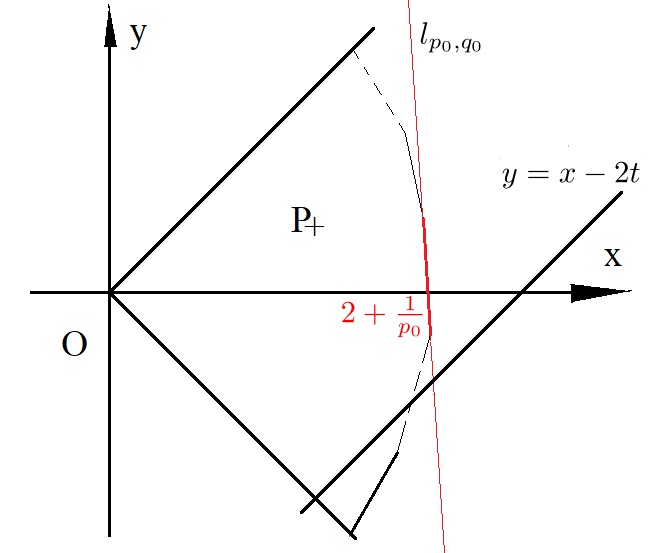}\\
\end{center}
\end{figure}

By a direct computation, we have
$$\begin{aligned}&\bar c(\{l_{p_0,q_0}\geq0,0\leq x-y\leq 2t,y\geq-x\})\\=&\frac3{35}\left(15kt+16b+\frac{3 b (10 b^2 + 10 b k t + 3 k^2 t^2)}{20 b^3 + 45 b^2 k t + 36 b k^2 t^2 + 10 k^3 t^3}\right),\end{aligned}$$
where $k=\frac{q_0-p_0}{p_0+q_0}$ and $b=\frac{2p_0+1}{p_0+q_0}$. Under the condition \eqref{p0q0},  by using software \texttt{Wolfram Mathematica 8}, we get
\begin{align}\label{bar-c}
\bar c(\{l_{p_0,q_0}\geq0,0\leq x-y\leq 2t,y\geq-x\})\leq\frac32b=\frac{6p_0+3}{2p_0+2q_0}.
\end{align}
On the other hand, a polytope with K\"ahler-Einstein metrics must satisfy
$$\bar c(P_+)>2.$$
Thus by \eqref{bar-P} and \eqref{bar-c}, we derive
\begin{align}\label{p-q-relation}q_0<\frac12p_0+\frac34.
\end{align}

By (\ref{p-q-relation}), we have
\begin{align}\label{p0-q0-relation}\text{Vol}(P_+)&\leq\text{Vol}(\{l_{p_0,q_0}\geq0,x\geq y\geq-x\})\notag\\
    &=\frac{8(1 + 2 p_0)^6}{45 (p_0^2 - q_0^2)^3}\notag\\
    &\leq\frac{8(1 + 2 p_0)^6}{45 (p_0^2 - ((1/2) p_0 + (3/4))^2)^3}.
    \end{align}
It turns that  for $p_0\geq9 $,
$$\text{Vol}(P_+)\leq\frac{224755712}{4100625}.$$
However,
 $$\text{Vol}(P^{(2)}_+)=\frac{1701}{20}>\text{Vol}(P^{(3)}_+)=\frac{10751}{180}>\frac{224755712}{4100625},$$
where $\text{Vol}(P^{(2)}_+)$ and $\text{Vol}(P^{(3)}_+)$ are volumes of polytopes in  \emph{Case-1.1.2} and \emph{Case-1.2.1}, respectively.
Thus there is no desired K\"ahler-Einstein polytope with its volume equal to ${\rm Vol}(P^{(2)}_+)$ or ${\rm Vol}(P^{(3)}_+)$ when $p_0\geq9$.

Since $q_0\in\mathbb N$, we can improve (\ref{p0-q0-relation}) to
$$\begin{aligned}\text{Vol}(P_+)
    &\leq\frac{8(1 + 2 p_0)^6}{45 (p_0^2 - [(1/2) p_0 + (3/4)]^2)^3}.\end{aligned}$$
Here  $[x]=\max_{n\in\mathbb Z}\{n\leq x\}$.  By the above estimation, when $p_0=4,6,7,8$, we have
\begin{align}\label{vol-p-compare}
\text{Vol}(P^{(2)}_+)\,>\,\text{Vol}(P^{(3)}_+)\,>\,\text{Vol}(P_+).
\end{align}
Hence, it remains to deal with the cases when $p_0=3,5$.  In these two cases, we shall rule out polytopes that may not satisfy \eqref{vol-p-compare}.

When $p_0=5$, there are three possible choices of $q_0$, i.e. $q_0=1,2,3$  by (\ref{p-q-relation}).  It is easy to see that \eqref{vol-p-compare} still holds for the first two cases by the second relation in (\ref{p0-q0-relation}).  Thus we only need to consider all possible polytopes when $q_0=3$.  In this case, $\{l_{5,3}=0\}$ is an edge of $P_+$.

\emph{Case-7.3.1.} $P_+$ has only one outer face which lies on $\{l_{5,3}=0\}$.  Then
$$\text{Vol}(P_+)\,=\,\frac{1771561}{23040}.$$

\emph{Case-7.3.2.} $P_+$ has two outer edges. Assume that the second one lies on $\{l_{p_1,q_1}=0\}$.  Then $$|q_1|\leq p_1\leq4\text{ or }p_1=5,q_1=-3.$$
By  a direct computation, we see that  \eqref{vol-p-compare} holds except the following  two subcases:

\emph{Case-7.3.2.1.} $p_1=4, q_1=3$,
$$\text{Vol}(P_+)\,=\,\frac{383478671}{5000940}.$$


\emph{Case-7.3.2.2.} $p_1=2, q_1=1$,
$$\text{Vol}(P_+)\,=\,\frac{567779}{7680}.$$


\emph{Case-7.3.3.}  $P_+$ has three outer edges.  Then  $P_+$ is obtained by  cutting one of  polytopes in \emph{Case-7.3.2} with adding  new edge $\{l_{p_2,q_2}=0\}$.  In fact we only need to consider $P_+$  obtained by cutting \emph{Case-7.3.2.1} and \emph{Case-7.3.2.2} above, since it  obviously satisfies  \eqref{vol-p-compare} in the other cases.  By our construction, we can assume that $|q_2|\leq p_2\leq p_1$. The only possible $P$ which does not satisfy \eqref{vol-p-compare} is the case that   $p_1=4, q_1=3$ and $p_2=2, q_2=1$.  However,
$$\text{Vol}(P_+)\,=\,\frac{92167583}{1250235}.$$

\emph{Case-7.3.4.} $P_+$  has four outer edges.  We only need to consider    $P_+$ which  is obtained  by  cutting \emph{Case-7.3.3} with adding  new edge  $\{l_{p_3,q_3}=0\}$ with $|q_3|\leq p_3\leq 2$.  One can show that all of  these  possible $P_+$ satisfy \eqref{vol-p-compare}.  Thus we do not need to consider more polytopes with more than four outer edges  in case  of $p_0=5$.   Hence we conclude that for all polytopes $P$ with $p_0=5$,
$$\text{Vol}(P_+)\neq {\rm Vol}(P^{(2)}_+)~{\rm or}~ {\rm Vol}(P^{(3)}_+).$$
  Theorem  \ref{LTZ3} is true when $p_0=5$.

The case $p_0=3$ can be ruled out in a same way. We only list the exceptional  polytopes  such that  the volumes  of $P_+$  do not satisfy \eqref{vol-p-compare}:

\emph{Case-7.3.1'.} $P_+$ has only one outer face $\{l_{3,2}=0\}$.  Then
$$\text{Vol}(P_+)\,=\,\frac{941192}{5625}.$$

\emph{Case-7.3.2'.} $P_+$ has two outer face $\{l_{3,2}=0\}$ and $\{l_{2,1}=0\}$.  Then
$$\text{Vol}(P_+)\,=\,\frac{177064}{1875}.$$


In summary,  when $p_0\ge 3$, the volume of $P_+$ is not equal to 
either ${\rm Vol}(P^{(2)}_+)$ or ${\rm Vol}(P^{(3)}_+)$.
Finally by exhausting all possible compactifications for $p_0=1,2$ (see Table-2), we finish the proof of  Theorem  \ref{LTZ3}.

\end{proof}

\begin{rem}
If $P_+$ is further symmetric under the reflection with respect to the $x$-axis, it is easy to see its barycenter is $(\bar x(P_+),0)$ and
$$\bar x(P_+)\leq\bar x(\{-x\leq y\leq x, 0\leq x\leq(2+\frac1{p_0})\})=\frac67(2+\frac1{p_0}).$$
Thus a K\"ahler-Einstein polytope of this type must satisfy $$p_0\leq3.$$
\end{rem}

\subsection{Appendix: Non-smooth $\mathbb Q$-Fano $SO_4(\mathbb C)$-compactifications with $p_0\leq2$}

In this appendix we list all polytopes $P_+$ of non-smooth $\mathbb Q$-Fano $SO_4(\mathbb C)$-compactifications with $p_0\leq2$, namely, (3)-(7) and (10)-(12) labeled as in Table-2.\\
\begin{figure}[h]
\begin{tikzpicture}
\draw [dotted] (0,-2) grid[xstep=1,ystep=1] (3,2);
\draw (0,0) node{$\bullet$};
\draw (2,0) node{$\bullet$};
\draw (1.6,0.3) node{$2\rho$};
\draw [semithick] (1.5,1.5) -- (0,0) -- (1.5,-1.5) -- (3,0) -- (1.5,1.5);
\draw (0.2,0.7) node{(3)};
\draw [very thick, -latex] (0,0) -- (1,-1);
\draw [very thick, -latex] (0,0) -- (1,1);
\end{tikzpicture}
\end{figure}
\begin{figure}[h]
\begin{tikzpicture}
\draw [dotted] (0,-5) grid[xstep=1,ystep=1] (5,2);
\draw (0,0) node{$\bullet$};
\draw (2,0) node{$\bullet$};
\draw (1.6,0.3) node{$2\rho$};
\draw [semithick] (0,0) -- (1.66,1.66) -- (5,-5) -- (0,0);
\draw (0.2,0.7) node{(4)};
\draw [very thick, -latex] (0,0) -- (1,-1);
\draw [very thick, -latex] (0,0) -- (1,1);
\end{tikzpicture}
\begin{tikzpicture}
\draw [dotted] (0,-5) grid[xstep=1,ystep=1] (5,2);
\draw (0,0) node{$\bullet$};
\draw (2,0) node{$\bullet$};
\draw (1.6,0.3) node{$2\rho$};
\draw [semithick] (0,0) -- (1.5,1.5) -- (2,1) -- (5,-5) -- (0,0);
\draw (0.2,0.7) node{(5)};
\draw [very thick, -latex] (0,0) -- (1,-1);
\draw [very thick, -latex] (0,0) -- (1,1);
\end{tikzpicture}
\end{figure}
\begin{figure}[h]
\begin{tikzpicture}
\draw [dotted] (0,-3) grid[xstep=1,ystep=1] (3,2);
\draw (0,0) node{$\bullet$};
\draw (2,0) node{$\bullet$};
\draw (1.6,0.3) node{$2\rho$};
\draw [semithick] (0,0) -- (1.66,1.66) -- (3,-1) -- (3,-3) -- (0,0);
\draw (0.2,0.7) node{(6)};
\draw [very thick, -latex] (0,0) -- (1,-1);
\draw [very thick, -latex] (0,0) -- (1,1);
\end{tikzpicture}
\begin{tikzpicture}
\draw [dotted] (0,-2) grid[xstep=1,ystep=1] (3,2);
\draw (0,0) node{$\bullet$};
\draw (2,0) node{$\bullet$};
\draw (1.6,0.3) node{$2\rho$};
\draw [semithick] (0,0) -- (1.66,1.66) -- (2.66,-0.34)  -- (1.5,-1.5) -- (0,0);
\draw (0.2,0.7) node{(7)};
\draw [very thick, -latex] (0,0) -- (1,-1);
\draw [very thick, -latex] (0,0) -- (1,1);
\end{tikzpicture}
\begin{tikzpicture}
\draw [dotted] (0,-2) grid[xstep=1,ystep=1] (3,2);
\draw (0,0) node{$\bullet$};
\draw (2,0) node{$\bullet$};
\draw (1.6,0.3) node{$2\rho$};
\draw [semithick] (0,0) -- (1.66,1.66) -- (2.5,0) -- (1.66,-1.66) -- (0,0);
\draw (0.2,0.7) node{(8)};
\draw [very thick, -latex] (0,0) -- (1,-1);
\draw [very thick, -latex] (0,0) -- (1,1);
\end{tikzpicture}
\end{figure}

\begin{figure}[h]
\begin{tikzpicture}
\draw [dotted] (0,-2) grid[xstep=1,ystep=1] (3,2);
\draw (0,0) node{$\bullet$};
\draw (2,0) node{$\bullet$};
\draw (1.6,0.3) node{$2\rho$};
\draw [semithick] (0,0) -- (1.5,1.5) -- (2,1) -- (2.66,-0.34) -- (1.5,-1.5) -- (0,0);
\draw (0.2,0.7) node{(10)};
\draw [very thick, -latex] (0,0) -- (1,-1);
\draw [very thick, -latex] (0,0) -- (1,1);
\end{tikzpicture}
\begin{tikzpicture}
\draw [dotted] (0,-2) grid[xstep=1,ystep=1] (3,2);
\draw (0,0) node{$\bullet$};
\draw (2,0) node{$\bullet$};
\draw (1.6,0.3) node{$2\rho$};
\draw [semithick] (0,0) -- (1.5,1.5) -- (2,1) -- (2.5,0) -- (1.66,-1.66) -- (0,0);
\draw (0.2,0.7) node{(11)};
\draw [very thick, -latex] (0,0) -- (1,-1);
\draw [very thick, -latex] (0,0) -- (1,1);
\end{tikzpicture}
\begin{tikzpicture}
\draw [dotted] (0,-2) grid[xstep=1,ystep=1] (3,2);
\draw (0,0) node{$\bullet$};
\draw (2,0) node{$\bullet$};
\draw (1.6,0.3) node{$2\rho$};
\draw [semithick] (0,0) -- (1.5,1.5) -- (2,1) -- (2.5,0) -- (2,-1) -- (1.5,-1.5) -- (0,0);
\draw (0.2,0.7) node{(12)};
\draw [very thick, -latex] (0,0) -- (1,-1);
\draw [very thick, -latex] (0,0) -- (1,1);
\end{tikzpicture}
\end{figure}

\clearpage

\end{document}